\newtheorem{thmx}{Theorem}
\newtheorem{thm}{Theorem}[section]
\newtheorem{cor}[thm]{Corollary}
\newtheorem{lem}[thm]{Lemma}
\newtheorem{prop}[thm]{Proposition}
\theoremstyle{definition}
\newtheorem{defn}[thm]{Definition}
\newtheorem{exmp}[thm]{Example}
\numberwithin{equation}{section}
\DeclareMathOperator{\diam}{diam}
\DeclareMathOperator{\supp}{supp}
\newcommand{\N}{\mathbb{N}}
\newcommand{\F}{\mathcal{F}}
\newcommand{\Z}{\mathbb{Z}}
\newcommand{\T}{\mathbb{T}}
\newcommand{\R}{\mathbb{R}}
\newcommand{\ep}{\epsilon}
\newcommand{\lra}{\longrightarrow}
\newcommand{\ra}{\rightarrow}
\def \d {\delta}
\def \RP {{\bf RP}}
\numberwithin{equation}{section}
\begin{document}
\title{Sensitivity, proximal extension and higher order almost automorphy}
\author{Xiangdong Ye and Tao Yu}
\address[X. Ye, T. Yu]{Wu Wen-Tsun Key Laboratory of Mathematics, USTC, Chinese Academy of Sciences,
Department of Mathematics, University of Science and Technology of China,
Hefei, Anhui, 230026, P.R. China}
\email{yexd@ustc.edu.cn}
\email{ytnuo@mail.ustc.edu.cn}
\subjclass[2010]{Primary 37B05; Secondary 54H20}
\keywords{Sensitivity, minimality, infinite step nilfactor, distal factor}
\thanks{The authors were supported by NNSF of China (11371339, 11431012, 11571335).}


\begin{abstract}
Let $(X,T)$ be a topological dynamical system, and $\F$ be a family of subsets of $\Z_+$. $(X,T)$ is strongly $\F$-sensitive,
if there is $\delta>0$ such that for each non-empty open subset $U$,
there are $x,y\in U$ with $\{n\in\Z_+: d(T^nx,T^ny)>\delta\}\in\F$.
Let $\F_t$ (resp. $\F_{ip}$, $\F_{fip}$) be consisting of thick sets
(resp. IP-sets, subsets containing arbitrarily long finite IP-sets).

The following Auslander-Yorke's type dichotomy theorems are obtained:
(1) a minimal system is either strongly $\F_{fip}$-sensitive
or an almost one-to-one extension of its $\infty$-step nilfactor.
(2) a minimal system is either strongly $\F_{ip}$-sensitive
or an almost one-to-one extension of its maximal distal factor.
(3) a minimal system is either strongly $\F_{t}$-sensitive
or a proximal extension of its maximal distal factor.

\end{abstract}

\maketitle

\section{Introduction}

Throughout this paper $(X,T)$ denotes a \emph{topological dynamical system} (t.d.s. for short),
where $X$ is a compact metric space, 
and $T: X\ra X$ is continuous and surjective.
In this section, we first discuss the motivations of our research and then state
the main results of the article.

The notion of sensitivity (sensitive dependence on initial conditions) was first
used by Ruelle \cite{Ruelle1977}. It is in the kernel of the definition of Devaney's chaos.
 According to  Auslander and Yorke \cite{Auslander1980}
a t.d.s. $(X,T)$ is called \emph{sensitive}
if there exists $\delta>0$ such that for every $x\in X$ and every neighborhood $U_x$ of $x$,
there exist $y\in U_x$ and
$n\in\N$ with $d(T^nx,T^ny)>\delta$.
For a t.d.s. $(X,T)$, $\delta>0$ and an opene (open and non-empty) subset $U\subset X$, put
$$N(\delta,U)=\{n\in\N: \exists x,y \in U \ \text{with} \ d(T^nx,T^ny)>\delta\}=\{n\in\N: \diam(T^n(U))>\delta\}.$$
Then it is easy to see that $(X,T)$ is sensitive if and only if there exists $\delta>0$ such that
$N(\delta,U)\not = \emptyset$ for each opene subset $U$.
A t.d.s. $(X,T)$ is called \emph{equicontinuous} if for every $\varepsilon>0$
there is a $\delta>0$ such that
whenever $x,y\in X$ with $d(x,y)<\delta$, then $d(T^nx,T^ny)<\varepsilon$ for $n\in \N$.
Auslander and Yorke \cite{Auslander1980} proved the following dichotomy theorem:
a minimal system is either equicontinuous or sensitive.
A similar result obtained by Glasner and Weiss \cite{Glasner1993} states that:
a transitive system is either almost equicontinuous or sensitive.


There are several attempts to generalize the notion of sensitivity.
Akin and Kolyada \cite{Akin2003} introduced the notion of Li-Yorke sensitivity, combining the
two well known notions (sensitivity and Li-Yorke chaos) together. The study of sensitivity related to families of non-negative
integers was initiated by Moothathu in \cite{Moothathu2007}.
Let $\F$ be a family. Recall that according to \cite{Moothathu2007} $(X,T)$ is $\F$-{\it sensitive}
if there is $\delta>0$ such that for any opene subset $U$, $N(\delta,U)\in \F$.
$\F$-sensitivity for some families were discussed in \cite{Moothathu2007, Felipe14, Shi, Wang, HKZ2014, Li-Ye}.
It is known that
for a minimal system $\{thick\}$-sensitivity
is equivalent to $\{thickly\ syndetic\}$-sensitivity \cite{Wang}.
Very recently, a striking result obtained by
Huang, Kolyada and Zhang \cite[Theorem 3.1]{HKZ2014}\label{HKZ14} states that:
a minimal system is either
$\{thick\}$-sensitive or an almost one-to-one extension of its maximal equicontinuous factor.

\medskip
It is clear that when $(X,T)$ is $\F$-sensitive, then
for $n\in N(\delta,U)$, there are $x_n,y_n \in U$ such that $d(T^nx_n,T^ny_n)>\delta$.
If we require all $x_n$ (resp. $y_n$) are equal,
then it leads the notion of {\it strong $\F$-sensitivity}
which will be studied in detail in the paper. Recall that $(X,T)$ is strongly $\F$-sensitive,
if there is $\delta>0$ such that for each non-empty open subset $U$,
there are $x,y\in U$ with $\{n\in\Z_+: d(T^nx,T^ny)>\delta\}\in\F$, where $\F$ is a family of subsets of $\Z_+$.
We remark that some notions of sensitivity similar with the strong sensitivity
were studies in \cite{LTY15, Felipe14},
which appear naturally when studying mean equicontinuity. It was shown that a minimal system is either
mean-sensitive, or mean equicontinuous.

When investigating strong sensitivity we find that for some families $\F$ the requirement of
all $x_n$ or $y_n$ being equal is too strong.
So in this paper we also introduce a notion of sensitivity related
to a family $\F$, called {\it block $\F$-sensitivity}.
Roughly speaking, in this definition we require
$x_n$ (resp. $y_n$) are equal for a sequence of arbitrarily  long finite segments from the family $\F$.
For example,
a t.d.s. $(X,T)$ is called \emph{block \{thick\}-sensitive} (resp. \emph{block \{IP\}-sensitive})
if there is $\delta>0$
such that for each $x\in X$, every neighborhood $U_x$ of $x$ and
$l\in\N$ there are $y_l\in U_x$ with
$\{n\in\Z_+:d(T^nx,T^ny_l)>\delta\}$ containing $\{m+1,\ldots,m+l\}$ for some $m=m(l)\in \N$
(resp. a finite IP-set of length at least $l$.)
Thus
$$strong\ \F\!-\!sensitivity \subset  block\ \F-sensitivity \subset \F-sensitivity. $$






In this paper first we investigate $\F$-sensitivity to warm up.
Then we study block $\F$-sensitivity and some related strong $\F$-sensitivity notions for some families.
Finally we will focus on strong $\F$-sensitivity.
Note that for a minimal system we use $X_{eq}$, $ X_{\infty}$ and $X_D$ to denote the maximal equicontinuous factor,
the maximal $\infty$-step nilfactor and the maximal distal factor of $X$ respectively (for the definitions see Section \ref{section2}).
It is very interesting that for some well known families strong sensitivity for the family is closely related to
other well known dynamical properties.

The main results of the paper are:


\begin{thmx}
Let $(X,T)$ be a minimal system. Then the following conditions are equivalent:
\begin{enumerate}
  \item $(X,T)$ is block $\F_t$-sensitive;
  \item $\pi\colon X\to X_{eq}$ is not proximal.
\end{enumerate}
\end{thmx}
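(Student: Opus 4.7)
The plan is to prove each direction of the equivalence.

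For the implication $(1) \Rightarrow (2)$, I argue by contrapositive. Assume $\pi\colon X \to X_{eq}$ is proximal. A standard fact for minimal systems over equicontinuous factors forces $\pi$ to be almost one-to-one: any $T\times T$-minimal subset of $X \times_{X_{eq}} X$ other than the diagonal would consist of fiber pairs with a uniform positive lower bound on orbital distance (hence non-proximal), contradicting the hypothesis, and together with the automatic openness of $\pi$ this forces a residual set of singleton-fiber points. The Huang--Kolyada--Zhang theorem cited in the introduction then implies that $(X,T)$ is not $\F_t$-sensitive, and since block $\F_t$-sensitivity trivially implies $\F_t$-sensitivity (by choosing $x \in U$ in the block definition, the set $N(\delta, U)$ must contain arbitrarily long intervals), $(X,T)$ is not block $\F_t$-sensitive.

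For $(2) \Rightarrow (1)$, assume $\pi$ is not proximal. Then there exists $(x_0, y_0) \in X \times_{X_{eq}} X$ with $\inf_{n \geq 0} d(T^n x_0, T^n y_0) =: 2\delta > 0$. Let $M$ be a $T\times T$-minimal subset of the orbit closure of $(x_0, y_0)$ in $X \times X$. Then $M \subset X \times_{X_{eq}} X$, every pair $(a, b) \in M$ satisfies $d(T^n a, T^n b) \geq 2\delta$ for all $n \geq 0$, and both coordinate projections of $M$ are surjective onto $X$ by minimality of $(X, T)$.

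The main task is: given $x \in X$, a neighborhood $U$ of $x$, and $l \in \N$, produce $y \in U$ and $m \geq 0$ with $d(T^{m+i} x, T^{m+i} y) > \delta$ for $i = 1, \ldots, l$. Pick $y^* \in X$ with $(x, y^*) \in M$; the plan is to construct $y \in U$ so that the orbit of $(x, y)$ under $T \times T$ enters an arbitrarily small neighborhood of some $(a, b) \in M$. By $M$-invariance and uniform continuity of $T$ on the finite window $\{0, 1, \ldots, l\}$, such a visit automatically yields $d(T^{m+i} x, T^{m+i} y) > \delta$ for $i = 1, \ldots, l$. The construction should combine three recurrence phenomena: density of the backward $T$-orbit of $y^*$ in $X$ (so that $T^{-n} y^* \in U$ for infinitely many $n$); density of the $T \times T$-orbit of $(x, y^*)$ in $M$ (giving simultaneous $X$-returns of $x$ and $y^*$ to themselves for infinitely many $n$); and Hausdorff-continuous variation of $\pi$-fibers (from openness of $\pi$) aligned with Bohr almost-return times of $\pi(x)$ in the equicontinuous factor $X_{eq}$.

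The main obstacle is aligning these three recurrences simultaneously. A naive choice of $n$ with $T^{-n} y^* \in U$ and $y := T^{-n} y^*$ forces $T^n x \approx y^*$ at that time, so the pair-orbit reaches the diagonal at $(y^*, y^*)$ rather than $(x, y^*)$; conversely, choosing $n$ with $T^n x \approx x$ and $T^n y^* \approx y^*$ forces $y := T^{-n} y^* \approx y^*$, violating $y \in U$ since $d(x, y^*) \geq 2\delta$. Overcoming this requires threading a Bohr return of $\pi(x)$ on $X_{eq}$ together with a return of $x$ itself in $X$, and using the fiber continuity of $\pi$ to transport a suitable point from the shifted fiber $\pi^{-1}(T^s \pi(x))$ back to $\pi^{-1}(\pi(x))$ while preserving the block separation up to a controlled loss. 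Executing this quantitative alignment -- tracking fiber-continuity moduli of $\pi$ against Bohr-return bounds of $X_{eq}$ and uniform-continuity constants of $T^{-s}$ on the relevant time window -- constitutes the technical heart of the argument.
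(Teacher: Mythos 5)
Your proof of (1)$\Rightarrow$(2) rests on a false claim. You assert that a proximal extension $\pi\colon X\to X_{eq}$ of a minimal system must be almost one-to-one; this is not true, and the paper itself invokes the counterexample: by Glasner--Weiss \cite{GW1979} there is a minimal system for which $\pi\colon X\to X_{eq}$ is proximal but not almost one-to-one (this is exactly how Corollary \ref{ye911-1}'s companion corollary produces a system that is $\F_t$-sensitive but not strongly $\F_t$-sensitive). Your supporting argument only shows that every $T\times T$-minimal subset of $R_\pi$ is contained in the diagonal, which is a general feature of proximal extensions and does not force the fibers to degenerate on a residual set; nor is $\pi$ automatically open. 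The paper instead proves this direction directly (Proposition \ref{ye911-1}): from block $\F_t$-sensitivity one extracts, by shifting to the start of longer and longer separated blocks and passing to limits twice, a pair $(z_1,z_2)$ with $d(T^iz_1,T^iz_2)\ge\delta$ for all $i\in\Z_+$ and $\pi(z_1)=\pi(z_2)$ (the latter via the invariant metric on $X_{eq}$), i.e.\ a distal pair inside a fiber.

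For (2)$\Rightarrow$(1) your setup is correct and parallels the paper's (a fiber pair $(x,y)$ with $\inf_n d(T^nx,T^ny)>\delta$, transported to an arbitrary point by minimality and closedness of $Q(X,T)$), but you stop exactly where the real work begins: you name the obstacle of aligning the return of $y$ into $U$ with the persistence of the block separation, and then defer ``the technical heart of the argument.'' The paper closes this gap with a concrete combinatorial intersection: since $(x,y)$ is regionally proximal, $N(x,W)$ is a $\Delta$-set \cite[Proposition 4.7]{HKZ2014}, while $N(V,V)$ is a $\Delta^*$-set \cite{F1981}; hence the two sets meet, and any $n$ in the intersection together with $x'\in V\cap T^{-n}V$ yields $d(T^{n+i}x,T^{n+i}x')>\tfrac{1}{2}\delta$ for $i=0,\dots,l$. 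Without this (or some equivalent) ingredient your sketch does not constitute a proof of either direction.
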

\medskip

\begin{thmx}
Let $(X,T)$ be an invertible minimal system.
Then the following statements are equivalent.
\begin{enumerate}
\item  $(X,T)$ is  strongly $\F_{fip}$-sensitive;
\item  $(X,T)$ is  block $\F_{ip}$-sensitive;
\item   $\pi: X\lra X_{\infty}$ is not almost one-to-one.
\end{enumerate}
\end{thmx}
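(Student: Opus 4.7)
I establish the cycle $(1)\Rightarrow(2)\Rightarrow(3)\Rightarrow(1)$. Throughout I rely on the identification of the maximal $\infty$-step nilfactor $X_\infty$ of a minimal invertible system with the quotient $X/\RP^{[\infty]}$, and on the IP-combinatorial characterization of $\RP^{[\infty]}$ in the spirit of Host--Kra--Maass and Huang--Shao--Ye.

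For $(1)\Rightarrow(2)$, given $x\in X$, a neighborhood $U_x$, and $l\in\N$, strong $\F_{fip}$-sensitivity applied to $U_x$ supplies $(a,b)\in U_x\times U_x$ whose hitting set contains a finite IP-set $F$ of length $\geq 2l$. The triangle inequality forces $F\subseteq\{n:d(T^n x,T^n a)>\delta/2\}\cup\{n:d(T^n x,T^n b)>\delta/2\}$, and a finite Hindman-type pigeonhole extracts a finite IP-set of length $\geq l$ in one of the two summands, producing a witness for block $\F_{ip}$-sensitivity at $x$ with threshold $\delta/2$.

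For $(2)\Rightarrow(3)$ I argue contrapositively: assume $\pi\colon X\to X_\infty$ is almost one-to-one, fix any $\delta>0$, and choose $x$ from the residual $T$-invariant set of singleton-fiber points with neighborhoods $U_x\downarrow\{x\}$. Block $\F_{ip}$-sensitivity at $x$ would then produce $y_l\in U_x$ (hence $y_l\to x$) and generators $(n_1^{(l)},\ldots,n_{k_l}^{(l)})$ of finite IP-sets $F_l\subseteq\{n:d(T^n x,T^n y_l)>\delta\}$ with $k_l\geq l$. A Hindman/IP-Ramsey diagonalization supplies an infinite sequence $(m_j)_{j\geq 1}$ such that each $\FS$-sum $m_\alpha$ lies in $F_l$ for cofinally many $l$; taking an IP-limit of the pairs $(T^{m_\alpha} x,T^{m_\alpha} y_l)$ in $X\times X$ and using the closedness of $\RP^{[\infty]}$ under such limits produces a pair $(x^*,y^*)\in\RP^{[\infty]}$ with $d(x^*,y^*)\geq\delta$. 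Since $y_l\to x$ and the singleton-fiber property is $T$-invariant, one can arrange $x^*$ to lie in a singleton $\pi$-fiber, forcing $y^*=x^*$ and contradicting $d(x^*,y^*)\geq\delta$.

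For $(3)\Rightarrow(1)$, assume $\pi$ is not almost one-to-one. Upper semicontinuity of $z\mapsto\diam(\pi^{-1}(z))$ together with minimality of $X_\infty$ yields $\delta>0$ and a non-empty open $V\subset X_\infty$ with $\diam(\pi^{-1}(z))>2\delta$ for every $z\in V$. For any opene $U\subset X$, minimality gives $n\in\N$ with $T^{-n}\pi^{-1}(V)\cap U\neq\emptyset$, producing $x,y\in U$ with $\pi(x)=\pi(y)$ and $d(x,y)>\delta$, so $(x,y)\in\RP^{[\infty]}$ is non-trivial. The key remaining step is to show that $\{n:d(T^n x,T^n y)>\delta/2\}$ contains finite IP-sets of every length; I would deduce this from the IP-regional proximality characterization of $\RP^{[\infty]}$, exploiting that a non-trivial $\RP^{[\infty]}$-pair admits $T\times T$-returns to arbitrarily small neighborhoods of itself along IP-sets, and then Hindman-iterating these returns against the separation $d(x,y)>\delta$ to extract arbitrarily long finite IP-sets of $\delta/2$-separated times. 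This IP-recurrence step is the hardest ingredient of the proof, resting on the Host--Kra--Maass structural theory of the maximal pro-nilfactor and on the IP-combinatorics of nilsequences; the IP-Ramsey extraction and closedness of $\RP^{[\infty]}$ under IP-limits employed in $(2)\Rightarrow(3)$ are the next most delicate points.
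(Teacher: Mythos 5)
Your step $(1)\Rightarrow(2)$ is correct and is essentially the paper's own observation: split the finite IP-set $\{n:d(T^na,T^nb)>\delta\}\cap F$ into the two sets where $T^nx$ is $\delta/2$-far from $T^na$ resp.\ $T^nb$, and invoke the finite Ramsey property of finite IP-sets (Lemma~\ref{finiteip-2}). The other two implications, however, contain genuine gaps, and both trace to the same missing ingredient: a mechanism for producing, inside a prescribed small opene set $U$, a \emph{second} point whose orbit returns to $U$ along an $\F_{fip}$-set of times. The paper obtains this measure-theoretically, via an invariant measure and the Gillis lemma (Lemma~\ref{s-times}), which yields $\mu(U\cap T^{-q}U)\ge \tfrac12\mu(U)^2$ for some $q$ in any sufficiently long finite IP-set and is then iterated; your outline offers no substitute for it.

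Concretely, in $(3)\Rightarrow(1)$ the assertion that $T^{-n}\pi^{-1}(V)\cap U\neq\emptyset$ "produces $x,y\in U$ with $\pi(x)=\pi(y)$ and $d(x,y)>\delta$" is false: large fibers do not place two $\delta$-separated fiber-mates inside an arbitrarily small opene $U$ --- that is exactly what has to be proved. The paper must split into cases: if $\pi$ is not proximal it takes a distal pair in $R_\pi\subseteq\RP^{[\infty]}$, uses Lemma~\ref{SY33} to get $N(x_1,V)\in\F_{fip}$ for $V$ a neighborhood of the far point, and then runs the measure argument to find $z\in U$ with $T^lz\in U$ along an $\F_{fip}$-subset (Proposition~\ref{925-3}); if $\pi$ is proximal and not almost one-to-one a separate argument with central sets is needed (Proposition~\ref{925-1}). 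Your appeal to "$T\times T$-returns of the pair to neighborhoods of itself along IP-sets" is not what Lemma~\ref{SY33} provides (it controls $N(x,V)$ for $V$ near $y$, not $N((x,y),U_x\times U_y)$), and no topological fact supplies it. In $(2)\Rightarrow(3)$, the proposed "Hindman/IP-Ramsey diagonalization" extracting an infinite sequence $(m_j)$ with all finite sums in cofinally many $F_l$ does not exist: one cannot pass from arbitrarily long finite IP-sets to an infinite IP-set (this is precisely the gap between $\F_{fip}$ and $\F_{ip}$), and the sets $F_l$ attached to different $y_l$ are not coherently nested; moreover $\RP^{[\infty]}$ is detected by finite IP visiting times, so an "IP-limit" is the wrong device. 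The paper's Proposition~\ref{925-4} instead builds a nested sequence of neighborhoods, uses Lemma~\ref{s-times} at each stage to force $N(z,U_0^c)\in\F_{fip}$ for the limit point $z$, and then locates the partner $y$ by a compactness-plus-Ramsey covering argument so that $N(z,W')\in\F_{fip}$ for every neighborhood $W'$ of $y$, whence $(y,z)\in\RP^{[\infty]}$ by Lemma~\ref{SY33}. Without these constructions your cycle does not close.
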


\newpage


\begin{thmx}
Let $(X,T)$ be a minimal system. Then the following conditions are equivalent:
\begin{enumerate}
  \item $(X,T)$ is strongly $\F_{ip}$-sensitive;
  \item $\pi\colon X\to X_D$ is not almost one-to-one. 
\end{enumerate}
\end{thmx}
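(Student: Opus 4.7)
The strategy is the Auslander--Yorke dichotomy scheme used in \cite{HKZ2014} for the thick/equicontinuous pair (and presumably adapted already for Theorems A and B of the present paper): introduce an intermediate notion of ``IP-regular point'', show by Baire category and minimality that the set of such points is either residual or empty, and match the residual case with $\pi$ being almost one-to-one (the negation of (2)) and the empty case with $(X,T)$ being strongly $\F_{ip}$-sensitive (condition (1)).

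Concretely, I would let $E\subseteq X$ be the set of points $x$ admitting, for every $\eps>0$, a neighbourhood $V$ of $x$ such that $\{n\in\Zp:d(T^nx',T^ny')>\eps\}\notin\F_{ip}$ for all $x',y'\in V$. The set $E$ is $T$-invariant and a countable intersection of open sets (both direct verifications), so by minimality $E$ is either residual or empty. The equivalence ``$E=\emptyset$ iff $(X,T)$ is strongly $\F_{ip}$-sensitive'' is then a routine compactness chase: if $E$ is empty, pointwise witnesses $\eps_x$ together with a finite subcover of $X$ produce a uniform $\delta>0$ realising strong $\F_{ip}$-sensitivity, and the reverse implication is immediate from the definitions.

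The main content of the theorem is therefore the equivalence ``$E$ is residual iff $\pi\colon X\to X_D$ is almost one-to-one''. For the direction $\pi$ almost 1-1 $\Rightarrow E$ residual, I would take a point $x_0$ with $\pi^{-1}(\pi(x_0))=\{x_0\}$ and invoke the IP-recurrence of the distal minimal system $X_D$---the fact that sufficiently close pairs in $X_D$ enjoy close-return times meeting every IP-set---together with the upper semi-continuity of the fibres of $\pi$ at $x_0$ (provided by the almost 1-1 assumption) to transfer this IP-recurrence back to $X$ and conclude $x_0\in E$. For the reverse direction, given $x\in E$, a putative companion $y\neq x$ with $\pi(y)=\pi(x)$ would, via minimality of a suitable subsystem of $(X\times X,T\times T)$ through $(x,y)$ combined with the $T$-invariance of the distal structure relation, allow the construction of pairs in arbitrarily small neighbourhoods of $x$ that violate IP-regularity, yielding a contradiction.

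The main obstacle is the forward direction of this last equivalence: lifting the IP-recurrence of the distal factor $X_D$ through the almost 1-1 extension to $X$. Unlike \cite{HKZ2014}, where the equicontinuous structure of $X_{eq}$ preserves distances and lifts cleanly to almost 1-1 extensions, a distal factor enjoys only ``IP-scale'' uniformity, and the transfer requires careful use of Furstenberg-style IP-recurrence in distal minimal systems together with $T$-invariant control of the trivial-fibre set along orbits. This is where the principal technical novelty (beyond the block-sensitivity framework underpinning Theorem B) is expected to lie.
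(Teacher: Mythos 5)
Your scheme is not the one the paper uses, and it has two genuine gaps. The first is the $T$-invariance of the set $E$ of ``IP-regular points'', which you describe as a direct verification and on which the entire Baire-category localization rests. It is not direct: unlike $\F_t$ or $\F_s$, the family $\F_{ip}$ is not translation-invariant (the even positive integers form the IP-set $FS(\{2^i\}_{i\geq 1})$, while the odd positive integers contain no IP-set at all, since $p_1+p_2$ is even whenever $p_1,p_2$ are odd), so from $\{n\in\Z_+:d(T^nx',T^ny')>\eps\}\notin\F_{ip}$ for $x',y'$ near $x$ you cannot conclude the corresponding statement for the shifted sets attached to points near $Tx$. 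Since the density of each non-empty $E_\eps$ --- and with it both the implication ``$E=\emptyset$ implies a \emph{uniform} sensitivity constant'' and the residual-or-empty alternative --- depends on this invariance, the dichotomy does not get off the ground. This is exactly why the paper argues globally instead: it replaces $E$ by the manifestly invariant notion of a \emph{strongly proximal} pair ($(y,y)\in\omega((x,y),T\times T)$) and proves in Proposition~\ref{929-1} that strong $\F_{ip}$-sensitivity is equivalent to every non-empty open set containing a point with a $\delta$-separated strongly proximal companion.

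The second and more serious gap is the direction you dispose of in one sentence: $\pi\colon X\to X_D$ not almost one-to-one implies strong $\F_{ip}$-sensitivity. A companion $y\neq x$ with $\pi(y)=\pi(x)$ only lies in $S_D$, the closed invariant equivalence relation generated by $P(X)$; such a pair need not be proximal, nor a limit of proximal pairs, and ``minimality of a subsystem through $(x,y)$ plus invariance of the distal structure relation'' produces no IP-set of separation times. The paper's proof splits on whether $\pi$ is proximal (equivalently, by Shapiro's theorem, whether $P(X)$ is closed). If $\pi$ is proximal and not almost one-to-one, it uses that $N(x,V)$ is a central set, hence contains an IP-set, and then a measure-theoretic argument (Gillis's lemma applied to an invariant measure, as in Proposition~\ref{925-1}) to extract a point $z$ near $x$ whose orbit returns to a neighborhood of $x$ along an IP-\emph{subset} of those times; this is the step that actually manufactures $\{n:d(T^nx,T^nz)>\delta\}\in\F_{ip}$. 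If $\pi$ is not proximal, $P(X)$ is not closed, and one places a $\delta$-separated proximal pair inside any given open set and invokes strong proximality. None of these ingredients appears in your outline. By contrast, the step you single out as the principal obstacle --- transferring recurrence from the distal factor through an almost one-to-one extension --- is short: $N(y,V)$ is an IP$^*$-set in a distal minimal system, $\F_{ip}^*$ is a filter, and the Ramsey property of $\F_{ip}$ finishes it; the paper reaches the same conclusion through the chain (1)$\Rightarrow$(2)$\Rightarrow$(3) of Theorem~\ref{thm:Fip-sensitive}.
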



\begin{thmx} Let $(X,T)$ be a minimal system.
Then the following conditions are equivalent:
\begin{enumerate}
  \item $(X,T)$ is strongly $\F_{t}$-sensitive;
  \item $\pi\colon X\to X_D$ is not proximal. 
\end{enumerate}
\end{thmx}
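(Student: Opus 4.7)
Both directions are argued via contrapositives, relying on the following characterization (which I would establish as a preliminary lemma): for $(x,y) \in X \times X$ and $\delta > 0$, the set $\{n \in \Z_+ : d(T^n x, T^n y) > \delta\}$ is thick if and only if the orbit closure $\overline{\mathcal{O}_{T \times T}(x,y)}$ meets the closed, $T \times T$-invariant set $D_\delta := \{(a,b) \in X \times X : d(T^n a, T^n b) \ge \delta \text{ for all } n \ge 0\}$. The forward direction follows by extracting a subsequential limit of $T^{m_k}(x,y)$ along intervals $[m_k, m_k + k] \subset \{n : d(T^n x, T^n y) > \delta\}$; the reverse uses continuity to propagate nearness to a point $(a,b) \in D_\delta \cap \overline{\mathcal{O}(x,y)}$ along arbitrarily long initial segments.

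For $(1)\Rightarrow(2)$, suppose $\pi\colon X\to X_D$ is proximal, so that $R_\pi := \{(x,y) : \pi(x)=\pi(y)\} \subseteq P(X)$. Given $\delta>0$, take an opene $U\subset X$ with $\diam(U)<\delta$. For any $(x,y)\in U\times U$, I consider two cases. If $\pi(x)=\pi(y)$, then $R_\pi$ is closed and $T\times T$-invariant, so $\overline{\mathcal{O}(x,y)}\subseteq R_\pi\subseteq P(X)$, which is disjoint from the distal set $D_\delta$. If $\pi(x)\ne\pi(y)$, then the distality of $X_D$ forces $(\pi(x),\pi(y))$ distal in $X_D$, hence $(x,y)$ distal in $X$; therefore $\overline{\mathcal{O}(x,y)}$ is a minimal subset of $X\times X$, and a short subsequential-limit argument shows that every $(a,b)$ in the orbit closure has $\inf_n d(T^n a, T^n b) = \inf_n d(T^n x, T^n y) \le d(x,y) < \delta$, so $(a,b)\notin D_\delta$. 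In either case $\overline{\mathcal{O}(x,y)}\cap D_\delta=\emptyset$, so $\{n : d(T^n x, T^n y) > \delta\}$ is not thick. Since $\delta$ was arbitrary, $(X,T)$ is not strongly $\F_t$-sensitive.

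For $(2)\Rightarrow(1)$, assume $\pi$ is not proximal and pick a distal pair $(x_0,y_0)$ with $\pi(x_0)=\pi(y_0)$; set $c:=\inf_n d(T^n x_0, T^n y_0)>0$ and $\delta:=c/3$. The orbit closure $M := \overline{\mathcal{O}(x_0,y_0)}$ is a minimal subset of $X \times X$ lying in $D_c\subseteq D_\delta$. For each opene $U$, I aim to construct $(x,y)\in U\times U$ whose orbit closure meets $M$ (and hence $D_\delta$, giving thickness via the characterization). Fix $x\in U$ and, using minimality of $X$ together with the Ellis semigroup $E(X)$, select $p\in E(X)$ with $p x_0 = x$, so that $(x, p y_0) = p(x_0,y_0)\in M$. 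It then suffices to exhibit $y\in U$ that is proximal in $X$ to $p y_0$, for then $(x,y)$ is proximal to $(x, py_0)$ in $X \times X$, forcing $(x, p y_0) \in \overline{\mathcal{O}(x,y)}$ and hence $\overline{\mathcal{O}(x,y)}\cap M\ne\emptyset$. The heart of the proof, and the main obstacle, is establishing the existence of such a proximal $y$ inside an arbitrary opene $U$: I would handle this via Auslander's theorem on proximality to minimal points in $(X\times X, T \times T)$ combined with the non-proximality of $\pi$, which guarantees a sufficiently rich supply of proximal pairs throughout $X$ and is likely the content of an auxiliary lemma established earlier in the paper.
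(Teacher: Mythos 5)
Both directions of your proposal contain genuine gaps at the steps you lean on most heavily. In Case 2 of $(1)\Rightarrow(2)$ you assert that a distal pair $(x,y)$ has minimal orbit closure in $X\times X$. That is false in general: in the Morse system the pair $(\overline{\omega},\eta)$ used in Section 5 satisfies $\inf_{n\in\Z_+}d(\sigma^n\overline{\omega},\sigma^n\eta)>0$, yet its forward orbit closure properly contains the minimal set $\{(\xi,\overline{\xi}):\xi\in X\}$ (to which it does not belong, since $\eta\neq\omega$), so it is not minimal. Nor does the standing hypothesis that $\pi$ is proximal obviously repair this: when $\pi$ is proximal but not almost one-to-one, a small ball $U$ contains pairs $x,y$ lying in distinct but large fibres, and the orbit closure of such a pair may a priori contain a point of $D_\delta$ even though $d(x,y)<\delta$; your claim that every pair in every small $U$ has non-thick separation set is stronger than what the theorem asserts and is unsubstantiated. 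The paper instead shows that the distal system $X_D$ is not strongly $\F_t$-sensitive (via IP$^*$-recurrence of distal systems) and then lifts the negation through the proximal extension by the compactness argument of Proposition~\ref{1009-2}, which combines a thick separation set upstairs with a syndetic closeness set downstairs to manufacture a distal pair inside a single fibre, contradicting proximality; note that the witnessing open sets there are the saturated sets $\pi^{-1}(U_k)$, not small balls.

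The gap in $(2)\Rightarrow(1)$ is fatal to the strategy rather than to a single step. You need, for every opene $U$, a point $y\in U$ proximal to the prescribed point $py_0$ (or to one of the finitely constrained alternatives $p'y_0$ with $p'x_0=x$); all such candidates lie in the fibre $\pi^{-1}(\pi(x))$, so you are in effect demanding that proximal cells be dense. This fails in exactly the systems the theorem targets: the Morse minimal system satisfies (2) by the paper's Example, yet $P(X)$ is contained in the relation of the at most four-to-one map onto the odometer $G$, so every proximal cell has at most four points and no such $y$ exists in a generic $U$. This implication is the hard half of the theorem, and the paper's proof is of a completely different nature: Theorem~\ref{1009-4} uses the Ellis--Glasner--Shapiro structure theorem to show that a non--strongly-$\F_t$-sensitive minimal system is PI; Theorem~\ref{2-1-1} and Lemma~\ref{5001} then run an induction up the PI tower to conclude that $P(X)$ is closed, hence an equivalence relation, hence $\pi\colon X\to X_D$ is proximal. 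The key mechanism in Theorem~\ref{2-1-1} --- producing, from a distal pair that is a limit of proximal pairs, two distinct points in a common small set with a thick separation set, via openness of a distal extension and Lemma~\ref{limit} --- is not recoverable from any density statement about proximal cells.
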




From Theorem B it is natural to ask if we can find some
family $\F$ such that strong $\F$-sensitivity is related to a
$d$-step almost automorphy (see Section \ref{sec-5.2} for the definitions of the families appeared below), $d\in\N$.
This leads us to study strong $\F_{Poin_d}$-sensitivity (where $\F_{Poin_d}$ is the family of
all $d$-step Poincar\'e sequences) for $d\in\N$.
We show that if a minimal t.d.s. $(X,T)$ is strongly $\F_{Poin_d}$-sensitive,
then $\pi: X\lra X_{d}$ is not an almost one-to-one extension (Theorem \ref{1011-1}),
where $X_d$ is the maximal $d$-step nilfactor of $X$.
Examples show that the converse statement does not hold (see Example \ref{last-e}).
It is an interesting open question to find a family $\F$ such that for any minimal system
$(X,T)$, $(X,T)$ is strongly $\F$-sensitive if and only $\pi:X\lra X_\infty$ is not proximal.

For a minimal system, Table 1 gives the details of results obtained in the paper (the
results related to sensitivity are essentially obtained in \cite{HKZ2014}).

\begin{table}[!h]
\caption{Relationships} \vspace*{1.5pt}
\begin{center}
\begin{tabular}{| p{2.0cm}| p{4cm} | p{4.0cm} | p{2.0cm} |p{2.0cm}|}
\hline \multicolumn{1}{|c|} {}  & \multicolumn{1}{|c|}
{not strongly sensitive} & \multicolumn{1}{|c|} {not block sensitive} & \multicolumn{1}{|c|}
{not sensitive}   \\
\hline
$\F_t$ & proximal extension of the maximal distal factor & proximal extension of maximal equi. factor& almost  automorphy  \\
\hline
$\F_{ip}$ & {almost 1-1 extension of maximal distal factor} & $\infty$-step almost  automorphy & almost  automorphy \\
\hline
$\F_{fip}$ & $\infty$-step almost  automorphy& $\infty$-step almost  automorphy &almost  automorphy \\
\hline


\end{tabular}
\end{center}
\end{table}


\medskip
We remark that when defining strong sensitivity, except for the definition given
before one may define strong $\F$-sensitivity
as follows: if there is $\delta>0$ such that for each $x\in X$ and each neighborhood $U$ of $x$,
there is $y\in U$ with $\{n\in\Z_+: d(T^nx,T^ny)>\delta\}\in\F$.
It is easy to see that the two definitions coincide when $\F$ has the Ramsey property.
We also remark that since any sensitive minimal system is strongly $\{syndetic\}$-sensitive \cite{Moothathu2007},
we know that if a family $\F$ contains the set of all syndetic subsets then for a minimal system strong $\F$-sensitivity
is equivalent to sensitivity.
This fact restricts the families when we consider strong $\F$-sensitivity and try to obtain new results,
and also explains the reason why we choose $\F_t$, $\F_{ip}$ and $\F_{fip}$ et al to consider
strong $\F$-sensitivity in this paper.

We also remark that for a transitive system, we may investigate the same problem. As the restriction
of the length of the paper we leave this study to readers.

The paper is organized as follows: In Section 2, we recall some definitions and  some related theorems.
In Section 3, we discuss sensitivity. In Section 4, we study block sensitivity and some related notions of
strong sensitivity, and prove Theorem A, Theorem B and Theorem C. In Section 5, we
investigate strong sensitivity and show Theorem D.

\medskip
\noindent {\bf Acknowledgments.}
The authors would like to thank Wen Huang, Song Shao for very useful discussions; and Jian Li
and Guohua Zhang for very careful reading.

\section{Preliminaries}\label{section2}
In this section we will recall some basic notions and theorems we need in the following sections.

\subsection{Topological dynamical systems}
In the article, sets of integers, nonnegative integers and
natural numbers are denoted by $\mathbb{Z},\ \mathbb{Z}_+$ and $\mathbb{N}$ respectively.
By a topological dynamical system we mean a pair $(X, T)$,
where $X$ is a compact metric space with a metric $d$ and $T : X\rightarrow X$ is continuous and surjective.
A non-vacuous closed
invariant subset $Y\subseteq X$ defines naturally a subsystem $(Y, T)$ of $(X, T)$.
A system $(X,T)$ is called {\em minimal} if it contains no proper subsystem.
Each point belonging to some minimal subsystem of $(X,T)$ is called a {\em minimal point}.
The \emph{orbit} of a point $x\in X$ is the set $Orb(x,T)=\{T^nx:\ n\in\Z_+\}$.

For $x\in X$ and $U,V\subset X$, put
$$N(x,U)=\{n\in\Z_+: T^nx\in U\}\ \text{and}\ N(U,V)=\{n\in\Z_+: U\cap T^{-n}V\neq\emptyset\}.$$
Recall that a dynamical system $(X,T)$ is called {\em topologically transitive}
(or just {\em transitive}) if for every two opene subsets $U,V$ of $X$
the set $N(U,V)$ is infinite.  Any point with dense orbit is called a \emph{transitive point}.
Denote the set of all transitive points by $Trans(X,T)$.
It is well known that for a transitive system, $Trans(X,T)$ is a dense $G_\delta$ subset of $X$.

Let $M(X)$ be the set of all Borel probability measures on $X$.
We are interested in those members of $M(X)$ that are invariant measures for $T$, denote by $M(X,T)$.
This set consists of all
$\mu\in M(X)$ making $T$ a measure-preserving transformation of $(X,\mathcal{B}(X),\mu)$,
where $\mathcal{B}(X)$ is the Borel $\sigma$-algebra of $X$.
By the Krylov-Bogolyubov Theorem, $M(X,T)$ is nonempty.
The \emph{support} of a measure $\mu\in M(X)$, denoted by $\supp(\mu)$,
is the smallest closed subset $C$ of $X$ such that $\mu(C)=1$.
We say that a measure has \emph{full support} or is \emph{fully supported} if $\supp(\mu)=X$.
If $(X,T)$ is a minimal system, every $T$-invarant measure has full support.

\subsection{Distal, proximal, regionally proximal}
Let $(X,T)$ and $(Y,S)$ be two dynamical systems. If there is a continuous surjection
$\pi: X \to Y$ with $\pi\circ T = S\circ \pi$,
then we say that $\pi$ is a \emph{factor map}, the system
$(Y,S)$ is a \emph{factor} of $(X,T)$ or $(X, T)$ is an \emph{extension} of $(Y,S)$.
If $\pi$ is a homeomorphism, then we say that $\pi$ is a \emph{conjugacy} and
dynamical systems $(X,T)$ and $(Y,S)$ are \emph{conjugate}. Conjugate dynamical systems can
be considered the same from the dynamical point of view.

Let $(X,T)$ be a dynamical system. A pair  $(x_1,x_2)\in X\times X$ is said to be \emph{proximal}
if for any $\ep>0$, there exists a positive integer $n$ such that $d(T^nx_1,T^nx_2)<\ep$.
Let $P(X,T)$ denote the collection of all proximal pairs in $(X,T)$, $P$ is a reflexive symmetric $T$
invariant relation, but is in general not transitive or closed. If $(x,y)$ is not
proximal, it is said to be a {\it distal pair}. A system $(X,T)$ is called {\it distal} if any pair
of distinct points in $(X,T)$ is a distal pair.

Recall that the
{\it regionally proximal relation} $Q(X, T)$ is the set of all points $(x_1, x_2)\in X\times X$ such that
for each $\ep> 0$ and each open neighborhood $U_i$ of $x_i$, $i = 1, 2$,
there are $x_i'\in U_i, i = 1, 2$,
and $n\in \N$ with $d(T^n(x_1'), T^n(x_2'))<\ep$.
Note that $Q(X)$ is a reflexive symmetric $T$
invariant closed relation, but is in general not transitive.
However for each minimal system $(X,T)$, $Q(X)$ is a closed invariant equivalence relation.

Every topological dynamical system $(X,T)$ has a maximal distal factor $(X_D,T)$
and a maximal equicontinuous factor $(X_{eq},T)$. That is, $(X_D,T)$ is distal
and every distal factor of $(X,T)$ is
a factor of $(X_D, T)$. $(X_{eq}, T)$ has the corresponding property for equicontinuous factors.
Thus there are
closed $T$-invariant equivalence relations $S_D$ and $S_{eq}$ such that $X/S_D=X_D$
and $X/S_{eq}=X_{eq}$. $S_D$ is the smallest closed $T$-invariant
equivalence relation containing $P(X)$, and $X_{eq}$ is the smallest closed $T$ invariant
equivalence relation containing $Q(X)$.

An extension $\phi:(X, T) \rightarrow (Y, S)$ is {\it proximal} if $R_{\phi}\subset P(X, T)$ and is distal if
$R_{\phi}\cap P(X, T) = \Delta_{X}$, where $R_\phi=\{(x,y)\in X^2: \phi(x)=\phi(y)\}$.
Observe that when $Y$ is trivial (reduced to one point) the map
$\phi$ is distal if and only if $(X, T)$ is distal.
An extension $\phi : (X, T) \rightarrow (Y, T)$
is almost one-to-one if the $G_{\delta}$ set $X_0 = \{x \in X : \phi^{-1}(\phi(x)) = {x}\}$ is dense.

\subsection{Nilmanifolds and nilsystems}

Let $G$ be a group. For $g, h\in G$, we write $[g, h] =ghg^{-1}h^{-1}$
for the commutator of $g$ and $h$ and we write
$[A,B]$ for the subgroup spanned by $\{[a, b] : a \in A, b\in B\}$.
The commutator subgroups $G_j$, $j\ge 1$, are defined inductively by
setting $G_1 = G$ and $G_{j+1} = [G_j ,G]$. Let $k \ge 1$ be an
integer. We say that $G$ is {\em $k$-step nilpotent} if $G_{k+1}$ is
the trivial subgroup.

\medskip

Let $G$ be a $k$-step nilpotent Lie group and $\Gamma$ a discrete
cocompact subgroup of $G$. The compact manifold $X = G/\Gamma$ is
called a {\em $k$-step nilmanifold}. The group $G$ acts on $X$ by
left translations and we write this action as $(g, x)\mapsto gx$.
The Haar measure $\mu$ of $X$ is the unique probability measure on
$X$ invariant under this action. Let $\tau\in G$ and $T$ be the
transformation $x\mapsto \tau x$ of $X$. Then $(X, T, \mu)$ is
called a {\em basic $k$-step nilsystem}. When the measure is not
needed for results, we omit it and write that $(X, T)$ is a basic
$k$-step nilsystem.

\medskip

We also make use of inverse limits of nilsystems and so we recall the definition of an inverse
limit of systems (restricting ourselves to the case of sequential inverse limits).
If $(X_i,T_i)_{i\in \N}$ are systems with $diam(X_i)\le M<\infty$ and $\phi_i:X_{i+1}\rightarrow X_i$
are factor maps, the {\em inverse limit} of
the systems is defined to be the compact subset of $\prod_{i\in\N}X_i$ given by
$\{ (x_i)_{i\in \N }: \phi_i(x_{i+1}) = x_i,i\in\N\}$, which is denoted by
$\displaystyle \lim_{\longleftarrow}\{X_i\}_{i\in \N}$. It is a compact metric space endowed with the distance
$\rho(x, y) =\sum_{i\in \N} 1/2^i d_i(x_i, y_i )$.
We note that the maps $\{T_i\}$ induce a
transformation $T$ on the inverse limit.
Let $(X_i,T_i)=(X,T)$ and $\phi_i=T$,
then the inverse limit of systems $(\widetilde{X},\widetilde{T})$ is called
the natural extension of $(X,T)$.

\medskip

If  $(X,T)$ is an inverse limit of basic $(d - 1)$-step minimal nilsystems.
$(X,T)$ is called a {\em $(d-1)$-step nilsystem} or a {\em system of order $(d-1)$}.

\subsection{Regionally proximal relation of order \texorpdfstring{$d$}{d},
\texorpdfstring{$\RP^{[d]}$}{RPd}}
Let $(X, T)$ be a t.d.s. and let $d\ge 1$ be an integer. A pair $(x, y) \in X\times X$ is said to be
{\em regionally proximal of order $d$} if for any $\d> 0$, there exist $x', y'\in X$ and a
vector ${\bf n} = (n_1,\ldots, n_d)\in\Z^d$ such that $\rho(x, x') < \d, \rho(y, y') <\d$, and
$$ \rho(T^{{\bf n}\cdot \ep}x', T^{{\bf n}\cdot \ep}y') < \d\ \text{for
any $\ep\in \{0,1\}^d$, $\ep\not=(0,\ldots,0)$},
$$ where ${\bf n}\cdot \ep = \sum_{i=1}^d \ep_in_i$. The set of
regionally proximal pairs of order $d$ is denoted by $\RP^{[d]}(X)$,
is called {\em the regionally proximal relation of order $d$}.

This notion was first introduced by Host-Kra-Maass in \cite{HKM10}.
It is clear that
\begin{equation}\label{easy}
 P(X)\subseteq \ldots \subseteq \RP^{[d+1]}\subseteq
\RP^{[d]}\subseteq \ldots \subseteq \RP^{[2]}\subseteq \RP^{[1]}=Q(X).
\end{equation}

It was shown \cite{HKM10, SY2012} that for each minimal system $(X,T)$,
$\RP^{[d]}(X)$ is a closed invariant equivalence relation
for any $d\in \N$. When $d=1$, $\RP^{[d]}(X)$ is nothing
but the classical regionally proximal relation which determines the
maximal equicontinuous factor for any minimal system. We remark that
recently Glasner-Gutman-Ye \cite{GGY16} define a new regionally proximal
relation of order $d$ for any group $G$ (coinciding with the previous definition
when $G$ is abelian) and show that it is an equivalence relation for any minimal
system $(X,G)$.

Now we state a proposition from \cite{HKM10, SY2012}
which we need in the sequel.

\begin{prop}\label{nil-factor-d}
Let $(X,T)$ be  minimal systems and $d\in \N$. Then the following statements are equivalent:
\begin{enumerate}
  \item $(X,T)$ is a $d$-step nilsystem;
  \item $\RP^{[d]}(X)=\Delta_X$.
\end{enumerate}
\end{prop}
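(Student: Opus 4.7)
The plan is to prove both implications by exploiting the algebraic structure of nilsystems together with the fact (already cited) that $\RP^{[d]}$ is a closed invariant equivalence relation for minimal systems.

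For (1)$\Rightarrow$(2), I would first treat a basic $d$-step nilsystem $X=G/\Gamma$ with $T\colon x\mapsto \tau x$. Suppose $(x,y)\in \RP^{[d]}(X)$ and write $y=ux$ with $u\in G$. The definition supplies, for every $\delta>0$, nearby points $x',y'=u'x'$ and a vector ${\bf n}=(n_1,\ldots,n_d)\in\Z^d$ such that $T^{{\bf n}\cdot\epsilon}x'$ and $T^{{\bf n}\cdot\epsilon}y'$ are within $\delta$ for every non-zero $\epsilon\in\{0,1\}^d$. A direct computation, using that $T^{{\bf n}\cdot\epsilon}y' = \tau^{{\bf n}\cdot\epsilon}u'\tau^{-{\bf n}\cdot\epsilon} T^{{\bf n}\cdot\epsilon}x'$, lets one expand $\tau^{{\bf n}\cdot\epsilon}u'\tau^{-{\bf n}\cdot\epsilon}$ using iterated commutators. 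Since $G_{d+1}$ is trivial, the product of these commutators over the $2^d-1$ non-zero $\epsilon$'s (with appropriate signs from a standard cube argument) telescopes to $u'$ modulo $\Gamma$, forcing $u\in\Gamma$ in the limit $\delta\to 0$ and hence $x=y$. For an inverse limit of such nilsystems, if $\pi_i\colon X\to X_i$ denote the factor maps, then $(x,y)\in \RP^{[d]}(X)$ implies $(\pi_i(x),\pi_i(y))\in \RP^{[d]}(X_i)=\Delta_{X_i}$ for every $i$, so $x=y$.

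For (2)$\Rightarrow$(1) I would invoke the Host--Kra--Maass structure theorem. The idea is: since $\RP^{[d]}(X)$ is a closed invariant equivalence relation, the quotient $X_d:=X/\RP^{[d]}(X)$ is a well-defined minimal factor of $X$, and the content of the structure theorem is that $X_d$ is an inverse limit of basic $d$-step minimal nilsystems, hence a $d$-step nilsystem in the sense of the paper. Granting this, the hypothesis $\RP^{[d]}(X)=\Delta_X$ yields $X=X_d$, proving (1). The identification of $X_d$ with an inverse limit of nilsystems is proved by induction on $d$: the base case $d=1$ is the classical fact that $X/Q(X)$ is the maximal equicontinuous (equivalently, basic $1$-step nil) factor, and the inductive step extends a $(d-1)$-step nilfactor to a $d$-step nilfactor using the Host--Kra cubic structure ${\bf Q}^{[d]}(X)$ and a Conze--Lesigne type cocycle analysis that realizes the extension as a group extension by a compact abelian group, from which a nilpotent Lie group action is assembled.

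The main obstacle is plainly (2)$\Rightarrow$(1), which is a deep structural result and must be cited from \cite{HKM10, SY2012}; the direct commutator computation handles (1)$\Rightarrow$(2) from first principles. For the purposes of this paper the implication used most often downstream is (1)$\Rightarrow$(2), so it is worth writing out that direction cleanly while merely quoting the inverse direction.
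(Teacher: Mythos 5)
You should first be aware that the paper does not prove Proposition \ref{nil-factor-d} at all: it is stated verbatim as a quoted result from \cite{HKM10, SY2012} (``Now we state a proposition from \cite{HKM10, SY2012} which we need in the sequel''). So there is no in-paper argument to measure your proposal against; the honest comparison is with the cited sources. Your division of labor is the right one and matches how the literature treats the statement: (2)$\Rightarrow$(1) is the Host--Kra--Maass structure theorem (with the extension to general minimal systems via the equivalence-relation property of $\RP^{[d]}$ from \cite{SY2012}), and it genuinely must be cited rather than reproved. Your reduction of the inverse-limit case of (1)$\Rightarrow$(2) to the basic nilsystem case is also correct, since factor maps carry $\RP^{[d]}$ into $\RP^{[d]}$.

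The one genuine gap is in your treatment of a basic nilsystem. The step ``the product of these commutators over the $2^d-1$ non-zero $\ep$'s telescopes to $u'$ modulo $\Gamma$, forcing $u\in\Gamma$'' is not an argument as it stands: $u$ with $y=ux$ is only determined up to the stabilizer of $x$, the conclusion you need is $ux\Gamma=x\Gamma$ rather than $u\in\Gamma$, and the regional proximality hypothesis only gives you, for each $\delta$, \emph{some} vector ${\bf n}$ making all $2^d-1$ faces $\delta$-close --- it does not hand you an identity among the conjugates $\tau^{{\bf n}\cdot\ep}u'\tau^{-{\bf n}\cdot\ep}$ that telescopes. What the triviality of $G_{d+1}$ actually buys, in the proofs of \cite{HKM10}, is the \emph{unique closing property} of the cube space $\Q^{[d+1]}(X)$ of a $d$-step nilsystem: a point of $\Q^{[d+1]}$ is determined by its first $2^{d+1}-1$ coordinates. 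Combined with the characterization that $(x,y)\in\RP^{[d]}(X)$ if and only if the point with $x$ in one vertex and $y$ in all the others lies in $\Q^{[d+1]}(X)$ (and the observation that the constant point at $y$ also lies there), uniqueness of the completion forces $x=y$. If you want to write out (1)$\Rightarrow$(2) ``from first principles,'' that is the argument to give; the commutator heuristic points at the right mechanism but does not close.
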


Let $\RP^{[\infty]}(X)=\underset{d=1}{\overset{\infty}{\cap}}\RP^{[d]}(X)$, then $\RP^{[\infty]}(X)$
is a closed invariant equivalence relation.

\begin{defn}
A minimal system $(X, T)$ is an $\infty$-step nilsystem or a system of order $\infty$,
if the equivalence relation $\RP^{[\infty]}$ is trivial, i.e. coincides with the diagonal.
\end{defn}

The following proposition was proved in \cite{5p}.
\begin{prop} A minimal system is an $\infty$-step nilsystem if and only if it is an
inverse limit of minimal nilsystems.
\end{prop}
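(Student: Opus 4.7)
The plan is to prove both directions of the equivalence by working with the quotient systems $X_d := X/\RP^{[d]}(X)$ and exploiting Proposition \ref{nil-factor-d} together with the monotonicity \eqref{easy}.

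\textbf{Backward direction.} Assume $X = \varprojlim (Y_i, T)$ where each $(Y_i, T)$ is a minimal $k_i$-step nilsystem, with projections $\pi_i : X \to Y_i$. I take $(x, y) \in \RP^{[\infty]}(X) = \bigcap_{d \ge 1} \RP^{[d]}(X)$ and fix $i$. A standard check (returning to the definition of $\RP^{[d]}$ and using continuity of $\pi_i$) shows that factor maps carry regionally proximal pairs of order $d$ to regionally proximal pairs of order $d$; so $(\pi_i(x), \pi_i(y)) \in \RP^{[d]}(Y_i)$ for every $d$, in particular for $d = k_i$. By Proposition \ref{nil-factor-d}, $\RP^{[k_i]}(Y_i) = \Delta_{Y_i}$, so $\pi_i(x) = \pi_i(y)$. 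Since this holds for all $i$ and the family $\{\pi_i\}$ separates points of the inverse limit, $x = y$. Hence $\RP^{[\infty]}(X) = \Delta_X$ and $(X,T)$ is an $\infty$-step nilsystem.

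\textbf{Forward direction.} Conversely, suppose $\RP^{[\infty]}(X) = \Delta_X$. For each $d \ge 1$, the relation $\RP^{[d]}(X)$ is a closed invariant equivalence relation on the minimal system $X$, so the quotient $X_d = X/\RP^{[d]}(X)$ is a minimal factor. A standard argument (using that the quotient map is a factor map and combining with the definition of $\RP^{[d]}$) shows $\RP^{[d]}(X_d) = \Delta_{X_d}$, so by Proposition \ref{nil-factor-d} each $X_d$ is a $d$-step nilsystem, i.e.\ an inverse limit of basic $d$-step minimal nilsystems. The inclusions $\RP^{[d+1]}(X) \subseteq \RP^{[d]}(X)$ from \eqref{easy} yield natural factor maps $\phi_d : X_{d+1} \to X_d$ compatible with the quotient maps $q_d : X \to X_d$, and together these induce a factor map $\Phi : X \to \varprojlim X_d$.

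I would then show that $\Phi$ is a conjugacy. Injectivity is immediate: if $\Phi(x) = \Phi(y)$ then $q_d(x) = q_d(y)$ for every $d$, i.e.\ $(x,y) \in \RP^{[d]}(X)$ for every $d$, hence $(x,y) \in \RP^{[\infty]}(X) = \Delta_X$. For surjectivity, given a coherent sequence $(y_d) \in \varprojlim X_d$, pick $x_d \in q_d^{-1}(y_d)$ and extract a convergent subsequence $x_{d_k} \to x$ by compactness; coherence of $(y_d)$ then gives $q_d(x) = y_d$ for every $d$, so $\Phi(x) = (y_d)$. Thus $X \cong \varprojlim X_d$. Since each $X_d$ is itself an inverse limit of basic minimal nilsystems $X_{d,j}$, a diagonal argument presents $X$ as an inverse limit of the doubly-indexed family $\{X_{d,j}\}$, which completes the proof.

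\textbf{Main obstacle.} The non-trivial ingredient is the behaviour of $\RP^{[d]}$ under factor maps and under passage to the quotient $X/\RP^{[d]}(X)$: namely that factor maps send $\RP^{[d]}$-pairs to $\RP^{[d]}$-pairs, and that the $d$-th regionally proximal relation of the quotient $X/\RP^{[d]}(X)$ collapses to the diagonal. These facts use the dynamical cube machinery underlying $\RP^{[d]}$ (as developed in \cite{HKM10, SY2012}) and are the technical heart of everything; once they are in hand, the rest of the argument is a clean inverse-limit construction.
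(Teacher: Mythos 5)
The paper itself does not prove this proposition; it is quoted from Dong--Donoso--Maass--Shao--Ye \cite{5p}, so there is no in-house argument to compare against. Your outline reproduces the standard proof from that reference: the backward direction (factor maps preserve $\RP^{[d]}$, plus Proposition \ref{nil-factor-d} applied to each coordinate of the inverse limit, plus the fact that the projections separate points) is complete and correct, and in the forward direction you correctly isolate the deep input, namely that $\RP^{[d]}\bigl(X/\RP^{[d]}(X)\bigr)=\Delta$, together with the clean identification $X\cong\varprojlim X_d$.

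The one step that would fail as written is the final ``diagonal argument.'' Having $X=\varprojlim_d X_d$ and $X_d=\varprojlim_j X_{d,j}$ does not by itself present $X$ as a sequential inverse limit of the basic nilsystems $X_{d,j}$: the definition of inverse limit used in the paper requires factor maps between consecutive terms of the sequence, and there is in general no factor map $X_{d+1,j'}\to X_{d,j}$ to diagonalize along. The standard repair is to pass to joins: let $W_n$ be the image of $X$ in $\prod_{d,j\le n}X_{d,j}$, so that $W_{n+1}\to W_n$ is the coordinate projection and $X=\varprojlim W_n$ because these factors separate points. One must then know that $W_n$ --- a minimal subsystem of a finite product of basic nilsystems --- is again a basic nilsystem; this follows from the fact that orbit closures under translations of nilmanifolds are sub-nilmanifolds, but it is an extra nontrivial ingredient on the same footing as the maximal-nilfactor theorem you already flag, and it should be stated rather than absorbed into the word ``diagonal.''
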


Let $(X,T)$ be a t.d.s. and $d\in \N$,
put $X_d=X/ \RP^{[d]}(X)$ and $X_{\infty}=X/\RP^{[{\infty}]}(X)$.

\begin{defn}\index{$d$-step almost automorphic point}
Let $(X,T)$ be a minimal system and $d\in \N\cup\{\infty\}$.  A point $x\in
X$ is called a {\em $d$-step almost automorphic} point (or $d$-step
AA point for short) if $\RP^{[d]}(X)[x]=\{x\}$.

A minimal system $(X,T)$ is called {\em $d$-step almost automorphic} ($d$-step AA for short)
if it has a $d$-step almost automorphic point.
\end{defn}

$d$-step almost automorphic systems were studied systematically in \cite{WSY2015},
in particular we have

\begin{prop}\cite[Theorem 8.13]{WSY2015}\label{thm-AA}
Let $(X,T)$ be a minimal system. Then $(X,T)$ is a $d$-step almost
automorphic system for some $d\in \N\cup\{\infty\}$ if and only if
it is an almost one-to-one extension of its maximal $d$-step
nilfactor $(X_d, T)$.
\end{prop}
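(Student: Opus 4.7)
The proof rests on the identification $\pi^{-1}(\pi(x)) = \RP^{[d]}(X)[x]$, which is immediate from the definition of $\pi$ as the quotient map $X \to X/\RP^{[d]}(X) = X_d$. Consequently $x$ is a $d$-step almost automorphic point exactly when $x$ belongs to the set $X_0 := \{x \in X : \pi^{-1}(\pi(x)) = \{x\}\}$ that appears in the definition of an almost one-to-one extension. So the two conditions in the statement are point-by-point translations of one another, and the real content of the proposition is that the existence of even a single point $x_0$ with $\pi^{-1}(\pi(x_0)) = \{x_0\}$ forces $X_0$ to be dense in $X$.

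For the direction ($\Leftarrow$) this is trivial: $\pi$ almost one-to-one means $X_0$ is (dense and in particular) nonempty, and any $x \in X_0$ is a $d$-step AA point. For the direction ($\Rightarrow$), given a $d$-step AA point $x_0$, the plan is to establish two properties: (i) $X_0$ is a $G_\delta$ subset of $X$, using upper semi-continuity of the fiber map $x \mapsto \pi^{-1}(\pi(x))$ (so that each $\{x \in X : \diam \pi^{-1}(\pi(x)) < 1/n\}$ is open and $X_0$ is their intersection); and (ii) $X_0$ is forward $T$-invariant, from which minimality together with $x_0 \in X_0$ gives that the dense orbit $\{T^n x_0\}_{n\ge 0}$ lies in $X_0$, hence $X_0$ is dense.

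The core technical step is (ii). Here one uses that $X_d$ is an inverse limit of minimal nilsystems (for $d=\infty$ by the proposition of \cite{5p} quoted above, and for $d$ finite by Proposition~\ref{nil-factor-d} applied at each level), so the induced map $T_d$ on $X_d$ is a homeomorphism, in particular injective. Given $x \in X_0$ and $y \in \pi^{-1}(\pi(Tx))$, surjectivity of $T$ on $X$ produces $z \in X$ with $Tz = y$; the identity $T_d\pi(z) = \pi(y) = \pi(Tx) = T_d \pi(x)$ and injectivity of $T_d$ force $\pi(z) = \pi(x)$, whereupon $x \in X_0$ forces $z = x$ and $y = Tx$. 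Thus $\pi^{-1}(\pi(Tx)) = \{Tx\}$, giving forward invariance of $X_0$. The main obstacle in writing the argument up carefully is exactly this injectivity of $T_d$: it rests on the group-translation structure of each nilsystem in the inverse limit representation of $X_d$, rather than on any direct property of $T$ itself. Once this is granted, the density of $X_0$, and hence the proposition, follows at once.
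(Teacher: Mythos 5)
Your argument is correct. Note, however, that the paper offers no proof of Proposition~\ref{thm-AA} to compare against: it is quoted verbatim from \cite{WSY2015}, so the only comparison available is with the standard argument, which is essentially what you give. The reduction to ``one singleton fibre forces a dense set of singleton fibres'' is exactly right, since $\pi^{-1}(\pi(x))=\RP^{[d]}(X)[x]$ by construction of $X_d=X/\RP^{[d]}(X)$, and your forward-invariance-plus-minimality mechanism is the standard way to upgrade nonemptiness of $X_0$ to density (the $G_\delta$ property via upper semi-continuity of fibres is correct but not actually needed for that step). One simplification worth noting: the injectivity of $T_d$, which you extract from the group-translation structure of the inverse limit representation, follows more cheaply from the fact that $X_d$ is distal --- if $T_dy=T_dy'$ then $(y,y')$ is proximal, hence $y=y'$ --- which avoids having to invoke the full structure theory of $X_d$ beyond its distality; either route does implicitly use the nontrivial fact that $\RP^{[d]}(X_d)$ is trivial, but the paper already assumes this in calling $X_d$ the maximal $d$-step nilfactor.
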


\subsection{Families}
Let $\mathcal{P}=\mathcal{P}(\mathbb{Z}_{+})$ be the collection of all subsets of $\mathbb{Z}_{+}$.
A subset $\mathcal{F}$ of $\mathcal{P}$ is a {\it family} if it is hereditary upwards,
i.e. $F_1\subset F_2$ and $F_1\in \mathcal{F}$ imply $F_2\in \mathcal{F}$.
A family $\mathcal{F}$ is {\it proper} if it is a
proper subset of $\mathcal{P}$, i.e. neither empty nor all of $\mathcal{P}$.
It is easy to see that $\mathcal{F}$ is proper if and only if
$\mathbb{Z}_{+}\in \mathcal{F}$ and $\emptyset \not\in\mathcal{F}$.
A family $\F$ has the \emph{Ramsey property}
if $F\in\F$ and $F=F_1\cup F_2$ imply that $F_i\in \F$ for some $i\in\{1,2\}$.
Any subset $\mathcal{A}$ of $\mathcal{P}$ generates a family
$$[\mathcal{A}]=\{F\in \mathcal{P}:F\supset A  \text{ for some } A\in \mathcal{A}\}.$$
If a proper family $\mathcal{F}$ is closed under finite intersection, then $\mathcal{F}$
is called a {\it filter}. For a family $\mathcal{F}$, the
{\it dual family} is
$$\mathcal{F}^{\ast}=\{F\in \mathcal{P}:\mathbb{Z}_{+}\setminus F\not\in \mathcal{F}\}
=\{F\in \mathcal{P}: F\cap F'\not=\emptyset \text{ for all }F'\in \mathcal{F}\}.$$
$\mathcal{F}^{\ast}$ is a family, proper if $\mathcal{F}$ is.
It is well known that a proper family has
the Ramsey property if and only if its dual $\F^*$ is a filter
\cite{F1981}.
Clearly, for a family $\F$
$$(\mathcal{F}^{\ast})^{\ast}=\mathcal{F} \text{ and }\mathcal{F}_1\subset
\mathcal{F}_2\Rightarrow \mathcal{F}_2^{\ast}\subset \mathcal{F}_1^{\ast}.$$

We say that a subset $F$ of $\mathbb{Z}_{+}$ is
\begin{enumerate}
\item \emph{thick} if it contains arbitrarily long blocks of consecutive integers, that is,
for every $d\geq 1$ there is $n\in\N$ such that $\{n,n+1,\dotsc,n+d\}\subset F$;
\item \emph{syndetic} if it has bounded gaps, that is, for some $N\in\N$ and every $k\in\N$ we have
$\{k,k+1,\dotsc,k+N\}\cap A\neq\emptyset$;
\item {\it piecewise syndetic} if it is the intersection of a syndetic set with
a thick set;
\item  \emph{thickly syndetic} if it has non-empty intersection with every piecewise syndetic set
\end{enumerate}

The collection of all syndetic (resp. thick) subsets is denoted by
$\mathcal{F}_s$ (resp. $\mathcal{F}_t$).
Note that $\mathcal{F}_s^*=\mathcal{F}_t$ and $\mathcal{F}_t^*=\mathcal{F}_s$.
The collection of all piecewise syndetic (resp. thickly syndetic) subsets
is denoted by $\mathcal{F}_{ps}$ (resp. $\mathcal{F}_{ts}$).

Let $\{b_i\}_{i\in I}$ be a finite or infinite sequence in $\mathbb{Z}_{+}$. One defines
$$FS(\{b_i\}_{i\in I}) =\{\underset{i\in \alpha}{\sum} b_i: \alpha
\text{ is a finite non-empty subset of }  I \}.$$
$F$ is an IP-set if it contains some $FS(\{p_i\}_{i=1}^{\infty})$
 where $p_i \in  \mathbb{N}$. The collection of all
IP-sets is denoted by $\mathcal{F}_{ip}$.
A subset of $\mathbb{Z}_{+}$ is called an $IP^{*}$-set, if it has non-empty
intersection with any IP-set. IP-sets are important in the study of dynamical properties,
see \cite{F1981,VB}.

If $I$ is finite, then one says $FS(\{ p_i \}_{i\in I})$ is an {\em
finite IP set} of length $|I|$.  The collection of all sets containing finite IP sets
with arbitrarily long lengths is denoted by $\F_{fip}$.

Let $E$ be a finite or infinite set in $\mathcal{P}(\Z_+)$, One defines
$$\Delta(E)=\{a-b: a\geq b, a,b \in E\}.$$
A subset $F$ of $\mathbb{Z}_{+}$ is called a {\it difference set}
if it contains some $\Delta(E)$ with $|E|$  infinite.
The collection of all difference sets is denoted by $\mathcal{F}_{\Delta}$.
A subset of $\mathbb{Z}_{+}$ is called a $\Delta^{*}$-set, if it has non-empty
intersection with any difference set. 

If $E$ is a finite set, then one says that $\Delta(E)$ is a
{\em finite difference set} of length $|E|$.
The collection of all sets containing finite difference sets
with arbitrarily long lengths is denoted by $\F_{f\Delta}$.

\subsection{Technical lemmas}

Note that a factor map is {\it semi-open} if it sends any opene set to a set containing an opene set.
To end the section we state an easy lemma which follows from the continuity of $\pi$.
\begin{lem}\label{easy-check}
Let $\pi: (X,T)\lra (Y,S)$ be a semi-open factor map between two t.d.s. and $\F$ be a family.
If $(Y,S)$ is $\F$-sensitive (resp. block $\F$-sensitive, strongly $\F$-sensitive), so is $(X,T)$.
\end{lem}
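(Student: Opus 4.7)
The plan is to combine two ingredients: semi-openness of $\pi$, which lets me extract an opene $V\subset Y$ sitting inside $\pi(U)$ whenever $U\subset X$ is opene; and uniform continuity of $\pi$ (automatic since $X$ is compact), which translates $\delta_Y$-separation of pairs in $Y$ into $\delta_X$-separation of their lifts in $X$. Concretely, I would first fix a sensitivity constant $\delta_Y>0$ coming from the hypothesis on $(Y,S)$ and then use uniform continuity to pick $\delta_X>0$ such that $d_Y(\pi(a),\pi(b))>\delta_Y$ forces $d_X(a,b)>\delta_X$. I then claim $\delta_X$ is a sensitivity constant for $(X,T)$.

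For ordinary $\F$-sensitivity: given an opene $U\subset X$, choose an opene $V\subset\pi(U)$ by semi-openness and apply the hypothesis in $Y$ to get $N_Y(\delta_Y,V)\in\F$. For each $n$ in this set pick $u_n,v_n\in V$ with $d_Y(S^nu_n,S^nv_n)>\delta_Y$, lift them to $x_n,y_n\in U$ using $V\subset\pi(U)$, and observe via $\pi\circ T^n=S^n\circ\pi$ together with the choice of $\delta_X$ that $d_X(T^nx_n,T^ny_n)>\delta_X$. Hence $N_Y(\delta_Y,V)\subset N_X(\delta_X,U)$, and hereditariness of $\F$ promotes this to $N_X(\delta_X,U)\in\F$.

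The strong and block variants require only a notational change. For strong $\F$-sensitivity, the hypothesis produces a \emph{single} pair $u,v\in V$ with $\{n:d_Y(S^nu,S^nv)>\delta_Y\}\in\F$; any preimages $x,y\in U$ then satisfy the pointwise inclusion $\{n:d_Y(S^nu,S^nv)>\delta_Y\}\subset\{n:d_X(T^nx,T^ny)>\delta_X\}$, and hereditariness finishes the argument. The block case is identical in spirit: the arbitrarily long finite segments of the prescribed shape (blocks of consecutive integers for $\F_t$, finite IP-sets for $\F_{ip}$, etc.) witnessed in $Y$ by a common pair in $V$ are witnessed in $X$ by any common lift to $U$.

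There is no genuine obstacle in any of the three cases; the lemma is a bookkeeping argument combining semi-openness, uniform continuity of $\pi$, and hereditariness of $\F$. The only mild point to watch is the strict-versus-weak inequality when invoking uniform continuity, which is handled by the standard trick of shrinking $\delta_Y$ to $\delta_Y/2$ before feeding it into the uniform continuity modulus. I would present the $\F$-sensitivity case in full and then indicate the one-line modification needed for the other two.
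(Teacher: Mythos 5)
The paper offers no proof of this lemma at all --- it is stated with the single remark that it ``follows from the continuity of $\pi$'' --- so there is nothing to compare line by line. Your argument for $\F$-sensitivity and for strong $\F$-sensitivity is exactly the intended one and is correct: semi-openness gives an opene $V\subset\pi(U)$, uniform continuity of $\pi$ on the compact space $X$ gives (in contrapositive) a $\delta_X$ with $d_Y(\pi a,\pi b)\ge\delta_Y\Rightarrow d_X(a,b)\ge\delta_X$, the intertwining $\pi\circ T^n=S^n\circ\pi$ transports the separation sets, and the upward hereditariness of a family finishes it. Your remark about shrinking $\delta_Y$ to handle strict versus weak inequalities is the right bookkeeping.

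The block case, however, is not ``identical in spirit,'' and your one-line dismissal hides a real issue. The paper's block definitions are anchored at a point: for each $x\in X$, each neighborhood $U_x$ of $x$ and each $l$, there must exist $y_l\in U_x$ with $\{n: d(T^nx,T^ny_l)>\delta\}$ containing the prescribed finite configuration. Semi-openness only guarantees that $\pi(U_x)$ \emph{contains} some opene $V$; it does not make $\pi(U_x)$ a neighborhood of $\pi(x)$, so you cannot apply block sensitivity of $Y$ at the point $\pi(x)$ and lift the witness into $U_x$. What your lifting does deliver is the ``pair form'': some $x',y_l\in U_x$ (with $\pi(x')\in V$) whose separation set contains the long configuration, and $x'$ need not be $x$. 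Passing from the pair form back to the pointwise form via the triangle inequality requires a Ramsey-type splitting: if a configuration of length $L$ is covered by two sets, one of them must contain a configuration of length $l(L)\to\infty$. This holds for finite IP-sets (Lemma~\ref{finiteip-2}, which is exactly why the paper records the equivalent pair form for block $\F_{ip}$-sensitivity), but it \emph{fails} for blocks of consecutive integers --- a length-$2l$ interval can be split into evens and odds, neither containing an interval of length $2$ --- so for block $\F_t$-sensitivity your argument only establishes the pair form, not the definition as stated. You should either restrict the block case to families whose finite configurations have this splitting property, or explicitly adopt the pair form of block sensitivity before invoking semi-openness.
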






The following lemma is easy to check.
\begin{lem}\label{natral-1} Let $(X,T)$ be a dynamical system, and $(\widetilde{X},\widetilde{T})$ be the natural extension of $(X,T)$.
Then $(X,T)$
is $\F$-sensitive (resp. block $\F$-sensitive, strongly $\F$-sensitive) if and only if $(\widetilde{X},\widetilde{T})$
is $\F$-sensitive (resp. block $\F$-sensitive, strongly $\F$-sensitive).
\end{lem}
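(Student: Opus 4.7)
The plan is to use the first-coordinate projection $p\colon\widetilde X\to X$, $p((x_i)_{i\in\N})=x_1$, which is a factor map intertwining $\widetilde T$ and $T$, together with the fact that $\widetilde T$ is a homeomorphism of $\widetilde X$. For the natural metric $\rho(\tilde x,\tilde y)=\sum_{i\in\N}2^{-i}d(x_i,y_i)$ one has $\rho(\tilde x,\tilde y)\geq\tfrac12 d(x_1,y_1)$, and every opene $\widetilde U\subset\widetilde X$ contains a basic cylinder $C_{K,V}:=\{\tilde x\in\widetilde X:x_K\in V\}$ for some $K\in\N$ and opene $V\subset X$. Since the bonding maps are $T$, one has the crucial identity $C_{K,V}=\widetilde T^{K-1}(p^{-1}(V))$, which lets one convert arbitrary opene sets in $\widetilde X$ into preimages of opene sets in $X$ at the cost of a time shift.

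For the direction ``$(X,T)$ sensitive $\Rightarrow$ $(\widetilde X,\widetilde T)$ sensitive'' (in any of the three senses), given opene $\widetilde U\subset\widetilde X$ I would pick $K,V$ with $C_{K,V}\subset\widetilde U$, apply the relevant sensitivity of $(X,T)$ to the opene $V$ to obtain witness points in $V$ (a pair $(x,y)$ for $\F$- or strong $\F$-sensitivity; a reference point $x$ with a sequence $(y_l)_{l}$ for the block version), extend these along arbitrary backward $T$-orbits to $\tilde x,\tilde y\in p^{-1}(V)$, and translate by $\widetilde T^{K-1}$ to land in $\widetilde U$. The inequality $\rho(\widetilde T^m\tilde z,\widetilde T^m\tilde w)\geq\tfrac12 d(T^m p(\tilde z),T^m p(\tilde w))$ then transfers the $X$-hitting set, shifted by $-(K-1)$, to a hitting set on $\widetilde X$ at constant $\delta/2$.

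For the reverse direction, given opene $U\subset X$ I would apply the $\widetilde X$-sensitivity to the opene $p^{-1}(U)$, obtain witnesses $\tilde x,\tilde y$, and project to $x=p(\tilde x),y=p(\tilde y)\in U$. Choosing $K_0$ with $\diam(X)\cdot 2^{-K_0}<\delta/2$, the inequality $\rho(\widetilde T^n\tilde x,\widetilde T^n\tilde y)>\delta$ forces some coordinate $k=k(n)\in\{1,\ldots,K_0\}$ to satisfy $d(T^nx_k,T^ny_k)>\delta/2$; for $n\geq k-1$, the bonding relation $T^{k-1}(x_k)=x_1$ converts this into $d(T^{n-k+1}x,T^{n-k+1}y)>\delta/2$. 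Thus the $X$-hitting set for $(x,y)$ contains a set obtained from the $\widetilde X$-hitting set $N\in\F$ by backward shifts of each element by at most $K_0-1$.

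The main obstacle is showing that this last set is still in $\F$. For $\F=\F_t$ this is handled by the shift-invariance of thickness combined with a pigeonhole step on the $K_0$-valued offset function $k(n)$; for the block versions of $\F_{ip}$ and $\F_{fip}$ one starts from a finite $\F$-configuration in $N$ of length substantially larger than the desired $l$, and applies a finitary Hindman/Rado coloring argument with respect to $k(\cdot)$ to extract, after a single uniform backward shift, a finite $\F$-configuration of length $l$ in the $X$-hitting set. The strong-$\F$ case is handled symmetrically, the witnesses being pairs $(x,y)$ rather than a point plus a sequence, and the same lifting/projection template applies with no additional combinatorial input.
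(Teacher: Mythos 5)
The paper offers no argument for this lemma (it is labelled ``easy to check''), so there is nothing to compare against line by line; the question is whether your argument closes. It does not, in two places, and the second one is exactly the case the paper actually uses. First, in the direction $(X,T)\Rightarrow(\widetilde X,\widetilde T)$ you end up with the hitting set $(N-(K-1))\cap\Z_+$, a translate of the $X$-hitting set $N\in\F$, and you pass over whether translates stay in $\F$. For $\F_t$ they do, but $\F_{ip}$ and $\F_{fip}$ are \emph{not} shift-invariant: if $N\supseteq FS(\{2p_i\})$ consists of even numbers, then $N-1$ consists of odd numbers and contains no triple $\{a,b,a+b\}$, hence no finite IP-set of length $2$. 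A Hindman-type coloring argument cannot repair this, since the obstruction is a uniform translation of the whole configuration, not a partition of it; the natural repair is instead to avoid the shift entirely by noting that $p$ (equivalently $T$) is semi-open in the minimal setting and invoking Lemma~\ref{easy-check}.

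Second, and more seriously, in the direction $(\widetilde X,\widetilde T)\Rightarrow(X,T)$ for $\F_t$ (the only instance the paper uses, in the non-invertible case of Theorem~D), your conclusion is that $M=\{m:d(T^mx,T^my)>\delta/2\}$ satisfies $N\subseteq\bigcup_{k=0}^{K_0-1}(M+k)$ with $N$ thick, and you claim pigeonhole on the offset $k(n)$ plus shift-invariance gives $M\in\F_t$. This inference is false: take $M=K_0\Z_+$ and $N=\Z_+$; then $N\subseteq\bigcup_{k=0}^{K_0-1}(M+k)$ and $N$ is thick, but $M$ contains no two consecutive integers. Pigeonholing over a long interval $I\subseteq N$ produces many $n\in I$ with a common offset $k_0$, hence many elements of $M$ inside $I-k_0$, but nothing forces them to be consecutive; you only get that $M$ is syndetic (with gap $K_0$) inside the long intervals of $N$, which is strictly weaker than thickness. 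Closing this gap requires a different idea -- for instance passing to a limit pair $(\tilde u,\tilde v)=\lim\widetilde T^{b_L}(\tilde x,\tilde y)$ along centers of the long intervals, which is separated along its entire (two-sided) $\widetilde T$-orbit, and then exploiting that all coordinates of $\widetilde T^n\tilde u$ are forward iterates of $p(\widetilde T^{n'}\tilde u)$ -- and even then the first-coordinate pair only inherits a syndetic, not thick, separation set, so some additional argument (or a different choice of witnesses in $U$) is genuinely needed. As written, the proposal does not prove the lemma in the one case the paper relies on.
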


The next lemma is from \cite[Proposition 4.4]{HKZ2014} or \cite[Lemma 2.4]{Eli-T}

\begin{lem}\label{ge0} Let $\pi:(X,T)\lra (Y,S)$ be a factor map with
$(X,T)$ minimal and $(Y,S)$ invertible. If $\pi$ is not almost
one-to-one, then $l=\inf_{y\in Y}\diam(\pi^{-1}(y))>0$.
\end{lem}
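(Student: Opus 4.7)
The plan is to combine upper semi-continuity of the fibre-diameter function with a Baire category argument and the minimality of $(X,T)$. Define $f\colon Y\to[0,\infty)$ by $f(y)=\diam(\pi^{-1}(y))$. First I would check that $f$ is upper semi-continuous: if $y_n\to y$, and $x_n,x_n'\in\pi^{-1}(y_n)$ realize (approximately) the diameter, then compactness of $X$ lets us extract subsequences converging to $x,x'$, which lie in $\pi^{-1}(y)$ by continuity of $\pi$; passing to the limit gives $\limsup f(y_n)\le f(y)$. Consequently $\{f\ge c\}$ is closed in $Y$ for every $c>0$.

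Next I would exploit that $\pi$ is not almost one-to-one: the set $X_0=\{x\in X:\pi^{-1}(\pi(x))=\{x\}\}$ is not dense, so there is a non-empty open $U\subseteq X$ disjoint from $X_0$. Thus $f(\pi(x))>0$ for every $x\in U$, and
\[
U=\bigcup_{n\ge 1}\bigl(U\cap\pi^{-1}\{f\ge 1/n\}\bigr).
\]
Because each summand is closed in the Baire space $U$, one of them has non-empty interior: there exist $\delta>0$ and a non-empty open $U'\subseteq U$ such that $f(\pi(x))\ge\delta$ for every $x\in U'$.

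Finally I would use minimality of $(X,T)$ together with invertibility of $(Y,S)$. Since $T$ is surjective and $S$ is a homeomorphism, a short verification shows $\pi^{-1}(S^k y)=T^k\pi^{-1}(y)$ for every $k\ge 0$; the non-obvious inclusion uses injectivity of $S$ to conclude $\pi(x)=y$ from $S^k\pi(x)=S^k y$. By minimality the open sets $T^{-k}U'$, $k\ge 0$, cover $X$, so by compactness $X=\bigcup_{k=0}^{N}T^{-k}U'$ for some $N$. For arbitrary $y\in Y$, pick $x\in\pi^{-1}(y)$ and $k\in\{0,\dotsc,N\}$ with $T^k x\in U'$; then $S^k y=\pi(T^k x)\in\pi(U')$, so $\diam(T^k\pi^{-1}(y))=f(S^k y)\ge\delta$. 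Uniform continuity of $T^k$ on $X$ yields $\eta_k>0$ with $d(a,b)<\eta_k\Rightarrow d(T^k a,T^k b)<\delta$; compactness of $\pi^{-1}(y)$ makes the diameter of its $T^k$-image attained, and the contrapositive then gives $\diam(\pi^{-1}(y))\ge\eta_k$. Setting $\eta=\min_{0\le k\le N}\eta_k>0$ produces the uniform bound $\diam(\pi^{-1}(y))\ge\eta$ for every $y$.

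The main obstacle is to run uniform continuity in the right direction: $T^k$ can expand distances, so there is no a priori lower bound for $\diam(T^k A)$ in terms of $\diam(A)$, only an upper one; the argument works because the inequality we actually need goes the other way, bounding $\diam(A)$ below via $\diam(T^k A)$ through the contrapositive. Invertibility of $S$ is essential for upgrading $T^k\pi^{-1}(y)\subseteq\pi^{-1}(S^k y)$ to an equality, which is what allows us to transport the positivity of $f$ from $S^k y$ back to $y$, and compactness of $X$ is what converts the locally obtained bound $\delta$ into a global uniform $\eta$.
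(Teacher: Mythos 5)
Your proof is correct. Note that the paper does not prove this lemma itself but only cites it (from \cite[Proposition 4.4]{HKZ2014} and \cite[Lemma 2.4]{Eli-T}); your argument --- upper semi-continuity of the fibre-diameter function, a Baire category argument on an open set missing $X_0$, the identity $\pi^{-1}(S^ky)=T^k\pi^{-1}(y)$ (using surjectivity of $T$ and injectivity of $S$), and minimality plus compactness to make the bound uniform --- is essentially the standard proof found in those references, and all the delicate points (the direction of the uniform-continuity estimate, where invertibility of $S$ is used) are handled correctly.
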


\section{Sensitivity for families}
To start our research we begin to study $\F$-sensitivity.
The goal is to show the notion of $\F$-sensitivity
is rough, meaning that for many families the notions are equivalent in the minimality setup.

Recall that the authors in  \cite{HKZ2014} proved that: a minimal system is either
$\F_t$-sensitive or an almost one-to-one extension of its maximal equicontinuous factor.
Moreover, they showed in \cite{HKKZ2014} that for minimal systems all of the following notions:
$\F_{ts}$-sensitivity, multi-sensitivity (see \cite{Moothathu2007} for a definition)
and $\F_t$-sensitivity
are equivalent. In this section,
we prove that for minimal systems all of the following notions:
$\F_{ts}$-sensitivity, $\F_{ip}$-sensitivity, $\F_{fip}$-sensitivity and $\F_{f\Delta}$-sensitivity are equivalent
(the equivalence to $\F_{Poin_d}$-sensitivity will be given in Section 5).

First we need a proposition which is basically due to Furstenberg \cite[Proposition 9.8]{F1981}.
Let $(X,T)$ be a t.d.s. and $\F$ be a family. Note that we say that $x\in X$ is $\F$-{\it recurrent},
if for each neighborhood
$U$ of $x$, $N(x,U)\in \F$; and $(X,T)$ is $\F$-recurrent if each point of $x\in X$ is $\F$-recurrent.

\begin{prop}\label{921-1} Let $(X,T)$ be a minimal equicontinuous system. Then $(X,T)$ is $\F_{f\Delta}^*$-recurrent.
\end{prop}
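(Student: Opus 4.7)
The plan is to exploit the fact that an equicontinuous minimal system can be re-metrized so that $T$ becomes an isometry, and then apply a pigeonhole argument on a finite $\varepsilon$-cover of $X$.

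First I would fix $x \in X$ and a neighborhood $U$ of $x$, and unwrap the definition: showing $N(x,U) \in \F_{f\Delta}^{\ast}$ amounts to showing that $\Z_+ \setminus N(x,U) \notin \F_{f\Delta}$, that is, there exists an integer $n_0$ such that for every finite set $E \subseteq \Z_+$ with $|E| \ge n_0$, the difference set $\Delta(E)$ meets $N(x,U)$. It suffices to produce a single $n_0$ depending on $U$ (and hence on $x$) with this property.

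Next I would replace the metric on $X$ by an equivalent one for which $T$ acts as an isometry, which is available because $(X,T)$ is equicontinuous. Choose $\varepsilon > 0$ with $B(x,\varepsilon) \subseteq U$, and by compactness pick a finite cover of $X$ by $k$ open balls of radius $\varepsilon/2$. Set $n_0 = k+1$. Given any $E \subseteq \Z_+$ with $|E| \ge n_0$, the pigeonhole principle forces two elements $a > b$ of $E$ with $T^a x$ and $T^b x$ lying in the same ball of the cover, so $d(T^a x, T^b x) < \varepsilon$. Isometry of $T$ then gives $d(T^{a-b}x, x) < \varepsilon$, hence $a - b \in N(x,U) \cap \Delta(E)$. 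This yields the desired property and completes the proof.

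I do not anticipate a genuine obstacle here: once equicontinuity is upgraded to an isometric metric, the argument is just compactness plus pigeonhole. The only point that requires a brief justification is the passage from equicontinuity of the action to an invariant compatible metric, which is standard (alternatively, one can work directly with the uniform equicontinuity inequality and replace "same ball" by a uniform $\delta$ chosen via equicontinuity from $\varepsilon$). In either formulation the conclusion is the stronger statement that $N(x,U)$ is itself a $\Delta^{\ast}$-set in the finite sense (meeting every $\Delta(E)$ with $|E|$ large enough), which is exactly membership in $\F_{f\Delta}^{\ast}$.
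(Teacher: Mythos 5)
Your proof is correct and follows essentially the same route as the paper's: the paper passes to the Kronecker (compact group rotation) representation and covers $X$ by finitely many translates $a^{l_i}V$ of a neighborhood $V$ with $VV^{-1}x_0\subseteq U$, which is exactly your isometric-metric-plus-finite-ball-cover pigeonhole argument expressed in group language. The only caution is your parenthetical fallback: the forward equicontinuity inequality alone does not let you pass from $d(T^ax,T^bx)<\delta$ to $d(T^{a-b}x,x)<\varepsilon$ (that step needs the invariant metric, or equivalently the group structure, which for a minimal equicontinuous system is indeed standard), so you should keep the re-metrization as the actual argument rather than treat it as optional.
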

\begin{proof} Since $(X,T)$ is minimal and equicontinuous,
we can assume that $(X,T)$ is a Kronecker system.
That is, $X=G$, an abelian compact group, and  $Tx=ax$ for a fixed $a\in G$.
Let $x_0$ be any point of $X$ and $U$
be any open neighborhood of $x_0$.
Let $V$ be any neighborhood of $x_0$ such that $VV^{-1}x_0\subseteq U$.
Since $X$ is minimal, there are $l_1,\ldots, l_k\in\N$ such that
$\{a^{l_1}V,a^{l_2}V,\cdots,a^{l_k}V\}$ is a cover of $X$.

Let $\{S_n\}_{n=1}^m$ be  any finite sequence with $m>k$,
 then there are $a^{S_u},a^{S_v}$ contained in
the same subset $a^{l_t}V$. Then $a^{S_u-S_v}x_0\in U$,
which implies that $(X,T)$ is $\F_{f\Delta}^*$-recurrent.
\end{proof}

Using Proposition \ref{921-1} and some theorem in \cite{HKZ2014},
we have the following result.
\begin{thm}\label{sensitive} Let $(X,T)$ be minimal.
Then the following statements are equivalent:
\begin{enumerate}
\item $(X,T)$ is $\F_{ts}$-sensitive.
\item $(X,T)$ is $\F_{t}$-sensitive.
\item $(X,T)$ is $\F_{ip}$-sensitive.
\item $(X,T)$ is $\F_{fip}$-sensitive.
\item $(X,T)$ is $\F_{f\Delta}$-sensitive.
\item there exists $\delta>0$ such that for every $x\in X$
there is $y\in X$ such that $(x,y)$ is regional proximal and $d(x,y)>\delta$.
\item $(X,T)$ is not an almost one-to-one extension of $X_{eq}$.
\end{enumerate}
\end{thm}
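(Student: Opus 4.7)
The cleanest route is to assemble a single cycle exploiting (i) the established dichotomy of \cite{HKZ2014,HKKZ2014} for $\F_t$ and $\F_{ts}$, (ii) Proposition \ref{921-1} for the $\F_{f\Delta}$-end, and (iii) elementary set-theoretic inclusions bridging the two. Concretely I would show
\[
(1)\Rightarrow(2)\Rightarrow(3)\Rightarrow(4)\Rightarrow(5)\Rightarrow(7)\Leftrightarrow(6),
\]
and invoke $(2)\Leftrightarrow(7)$ from \cite{HKZ2014} together with $(1)\Leftrightarrow(2)$ from \cite{HKKZ2014} to close everything. The family inclusions $\F_{ts}\subseteq\F_t$ and $\F_{ip}\subseteq\F_{fip}\subseteq\F_{f\Delta}$ hand us $(1)\Rightarrow(2)$ and $(3)\Rightarrow(4)\Rightarrow(5)$ for free.

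For $(2)\Rightarrow(3)$ I would prove the elementary inclusion $\F_t\subseteq\F_{ip}$: every thick set $A$ contains an IP-set. By induction, pick $p_1\in A$; once $p_1<\dots<p_n$ are chosen with $FS(\{p_1,\dots,p_n\})\subseteq A$ and sum $s_n$, use thickness of $A$ to find $p_{n+1}>p_n$ with $[p_{n+1},p_{n+1}+s_n]\subseteq A$. Every new IP-sum $p_{n+1}+t$ with $t\in FS(\{p_1,\dots,p_n\})\cup\{0\}$ lies in $[p_{n+1},p_{n+1}+s_n]\subseteq A$, so $FS(\{p_1,\dots,p_{n+1}\})\subseteq A$. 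The construction yields $FS(\{p_i\}_{i\ge 1})\subseteq A$. Consequently if $N(\delta,U)$ is thick it contains an IP-set, so $\F_t$-sensitivity implies $\F_{ip}$-sensitivity.

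For $(6)\Leftrightarrow(7)$ I would apply Lemma \ref{ge0} to $\pi\colon X\to X_{eq}$. A minimal equicontinuous system is a compact group rotation, hence invertible, and for minimal $X$ one has $Q(X)=R_\pi$. If $\pi$ is not almost one-to-one, Lemma \ref{ge0} gives $l=\inf_{\bar y}\diam\pi^{-1}(\bar y)>0$; for every $x\in X$ the fiber has diameter $\geq l$, so contains some $y$ with $d(x,y)\geq l/2$ and $(x,y)\in Q$, which is condition $(6)$ with $\delta=l/2$. Conversely $(6)$ forbids singleton fibers and hence precludes almost one-to-one-ness.

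The decisive step is $(5)\Rightarrow(7)$, proved contrapositively via Proposition \ref{921-1}. Assume $\pi\colon X\to X_{eq}$ is almost one-to-one and fix $\delta>0$. Pick an AOI point $x_0\in X_0$ with $\pi^{-1}(\pi(x_0))=\{x_0\}$. Upper semicontinuity of fibers produces a neighborhood $\bar V_0\ni \pi(x_0)$ with $\pi^{-1}(\bar V_0)\subseteq B(x_0,\delta/4)$. Using that $S$ acts as an isometry on the Kronecker system $X_{eq}$, choose a metric ball $\bar V'$ around $\pi(x_0)$ of radius at most half that of $\bar V_0$, so that $S^n\pi(x_0)\in\bar V'$ forces $S^n\bar V'\subseteq\bar V_0$. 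Set $U=\pi^{-1}(\bar V')$. Then for every $n$ in the return-time set $R=\{n\colon S^n\pi(x_0)\in\bar V'\}$ one has $T^nU\subseteq\pi^{-1}(S^n\bar V')\subseteq\pi^{-1}(\bar V_0)\subseteq B(x_0,\delta/4)$, so $\diam T^nU\leq\delta/2$ and $N(\delta,U)\subseteq\Z_+\setminus R$. Proposition \ref{921-1} gives $R\in\F_{f\Delta}^*$, whence $\Z_+\setminus R\notin\F_{f\Delta}$ and $N(\delta,U)\notin\F_{f\Delta}$, showing $(X,T)$ is not $\F_{f\Delta}$-sensitive. The main obstacle is the bookkeeping in this last step: the nested pair $\bar V'\subseteq\bar V_0$ must be arranged so that isometry of $S$ propagates the $\bar V_0$-confinement uniformly to every translate $S^n\bar V'$ that returns to $\bar V'$; once that is set up the remainder is a direct application of Proposition \ref{921-1}.
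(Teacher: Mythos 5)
Your proposal is correct and follows essentially the same route as the paper: the chain of family inclusions handles $(1)\Rightarrow(2)\Rightarrow(3)\Rightarrow(4)\Rightarrow(5)$, the cycle is closed by citing \cite[Theorem 3.1]{HKZ2014} and the known equivalence of $\F_{ts}$- and $\F_t$-sensitivity, $(7)\Leftrightarrow(6)$ comes from Lemma~\ref{ge0}, and your $(5)\Rightarrow(7)$ argument (singleton fiber, invariant metric on the Kronecker factor, Proposition~\ref{921-1} showing the return set in $\F_{f\Delta}^*$ is disjoint from $N(\delta,U)$) is exactly the paper's contradiction argument phrased contrapositively. The only addition is that you spell out the inclusion $\F_t\subseteq\F_{ip}$, which the paper simply declares clear.
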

\begin{proof}
It is clear that $\F_{ts}\subset \F_{t}\subset \F_{ip}\subset \F_{fip}\subset \F_{f\Delta}$.
By \cite[Theorem 3.1]{HKZ2014}, it remains to show (5) $\Rightarrow$ (7) and (7) $\Leftrightarrow$ (6)

(5) $\Rightarrow$ (7) Assume that $(X,T)$ is $\F_{f\Delta}$-sensitive with a sensitive constant $\delta>0$
and $\pi: (X,T)\lra (X_{eq},T_{eq})$ is almost
one-to-one.
Since
$(X_{eq},T_{eq})$ is a minimal equicontinuous system,
there is a compatible metric $d'$ such that $d'(T_{eq}x,T_{eq}y)=d'(x,y)$, for all
$x,y\in X_{eq}$.
Let  $y_0\in X_{eq}$ with $\pi^{-1}(y_0)$ singleton. We take an open set $W\subset X$
containing $\pi^{-1}(y_0)$ such that $\diam(W)<\delta$, and then there is an open set $V\subset X_{eq}$
containing $y_0$ such that $\pi^{-1}V\subset W$.

Let $B(y_0,\ep)\subset V$ for some $\ep>0$ and $U=\pi^{-1}(V_1)$ with $V_1=B(y_0,\ep/2)$.
By Proposition~\ref{921-1}, $N(y_0, V_1)\in \F_{f\Delta}^*$.

For $n\in N(y_0,V_1)$, we have $d'(T^n_{eq}y_0,y_0)<\ep/2$.
Since $d'(T^m_{eq}y,T^m_{eq}y_0)<\frac{\ep}{2}$ for all $m\in \mathbb{N}$ and $y\in V_1$,
we deduce that $T^n_{eq}(V_1)\subset V$ for $n\in N(y_0,V_1)$.
For $U=\pi^{-1}(V_1)$ and $n\in N(y_0,V_1)$ we get
$$T^n(U)=T^n\pi^{-1}(V_1)\subset \pi^{-1}(T^n_{eq}V_1)\subset \pi^{-1}(V)\subset W.$$
This means that $N(U,\delta)\cap N(y_0,V_1)=\emptyset$, which implies $N(U,\delta)\not\in \F_{f\Delta}$.

(6) $\Rightarrow$ (7) is obvious.

(7) $\Rightarrow$ (6) follows from Lemma~\ref{ge0}.
\end{proof}

\section{Block sensitivity and strong $\F_{fip},\ \F_{ip}$-sensitivity}
In this section we study block sensitivity and some related notions of strong sensitivity, and prove Theorems A, B and C.
This will be done in the following three subsections.

\subsection{Block \texorpdfstring{$\F_t$}{Ft}-sensitivity}
In this subsection, we discuss block $\F_t$-sensitivity and give a proof of Theorem A.

Recall that a t.d.s. $(X,T)$ is called \emph{block $\F_t$-sensitive} if there is $\delta>0$
such that for each $x\in X$, every neighborhood $U_x$ of $x$ and $l\in\N$ there are $y_l\in U_x$ with
$\{n\in\Z_+:d(T^nx,T^ny_l)>\delta\}$ containing $\{m+1,\ldots,m+l\}$ for some $m\in \N$. In fact we will
show the following theorem which covers Theorem~A.

\begin{thm}\label{thm:block-F-t-sensitive}
Let $(X,T)$ be a minimal dynamical system. Then the following conditions are equivalent:
\begin{enumerate}
  \item $(X,T)$ is block $\F_t$-sensitive;
  \item there exists $\delta>0$ such that for every $x\in X$
  there exists $y\in X$ such that $(x,y)$ is regional proximal and
  $\inf_{n\in\Z_+} d(T^nx,T^ny)>\delta$;
  \item $\pi\colon X\to X_{eq}$ is not proximal.
\end{enumerate}
\end{thm}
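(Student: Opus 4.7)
The plan is to prove the equivalence by establishing (2)$\Leftrightarrow$(3), (1)$\Rightarrow$(3), and (3)$\Rightarrow$(1). The first two are direct; the last is the substantive direction.

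For (2)$\Leftrightarrow$(3), I use that in a minimal system the regional proximal relation $Q(X)$ coincides with $R_\pi$ where $\pi\colon X\to X_{eq}$. Hence a pair $(x,y)\in Q(X)$ with $\inf_n d(T^nx,T^ny)>\delta$ is exactly a distal pair in $R_\pi$, so (2)$\Rightarrow$(3) is immediate. For (3)$\Rightarrow$(2), fix a distal pair $(p,q)\in R_\pi$ and set $\delta_0=\inf_n d(T^np,T^nq)>0$. The orbit-closure $M:=\overline{\{(T^np,T^nq):n\geq 0\}}$ is a closed, $(T\times T)$-invariant subset of $R_\pi$, and by continuity $d(a,b)\geq\delta_0$ for every $(a,b)\in M$. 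Since $\pi_1(M)$ is a closed $T$-invariant set containing $p$, minimality forces $\pi_1(M)=X$, producing for every $x\in X$ some $y$ with $(x,y)\in R_\pi$ and $\inf_n d(T^nx,T^ny)\geq\delta_0$, which is (2).

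For (1)$\Rightarrow$(3), I argue by contrapositive: assume $\pi$ is proximal and $(X,T)$ is block $\F_t$-sensitive with constant $\delta$. Fix any $x\in X$; for each $l$ extract $y_l\in B(x,1/l)$ and $m_l\in\N$ with $d(T^{m_l+j}x,T^{m_l+j}y_l)>\delta$ for $j=1,\dots,l$. Passing to a subsequence, $(T^{m_l}x,T^{m_l}y_l)\to (p^*,q^*)$. Then $d(T^jp^*,T^jq^*)\geq\delta$ for every $j\geq 1$, so $(Tp^*,Tq^*)$ is distal and hence so is $(p^*,q^*)$. On the other hand, $y_l\to x$ together with equicontinuity of $X_{eq}$ gives $d_{eq}(T^{m_l}\pi(x),T^{m_l}\pi(y_l))\to 0$, whence $\pi(p^*)=\pi(q^*)$. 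Thus $(p^*,q^*)$ is a distal pair in $R_\pi$, contradicting proximality of $\pi$.

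For (3)$\Rightarrow$(1), the substantive direction, take a minimal subsystem $M^*\subseteq M$ of $(X\times X,T\times T)$; both projections equal $X$ and $d(a,b)\geq\delta_0$ on $M^*$. Given $x\in X$, a neighborhood $U$ of $x$, and $l\in\N$, pick $(x,w)\in M^*$ via $\pi_1(M^*)=X$. Since $(x,w)\in R_\pi = Q(X)$, the definition of regional proximality yields, for any $\eta>0$, points $x',w'$ with $d(x,x'),d(w,w')<\eta$ and some $N$ with $d(T^Nx',T^Nw')<\eta$. Set $y_l:=x'\in U$ for $\eta$ small. Two triangle estimates, combined with uniform continuity of $T^j$ for $j=0,\dots,l$ on the compact $X$ and the invariance bound $d(T^{N+j}x,T^{N+j}w)\geq\delta_0$ on $M^*$, should give $d(T^{N+j}x,T^{N+j}y_l)>\delta_0/2$ for $j=1,\dots,l$, the required block. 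The hard part will be to control the modulus of $T^N$, since the regional-proximality index $N=N(\eta)\to\infty$ as $\eta\to 0$ (because $(x,w)$ is itself distal); one has to choose $\eta$ small enough relative to $l$, $\delta_0$, and the finitely many moduli of $T,T^2,\dots,T^l$, while exploiting the equicontinuity of $X_{eq}$ to keep the $\pi$-images on track throughout.
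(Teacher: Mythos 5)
Your directions (2)$\Leftrightarrow$(3) and (1)$\Rightarrow$(3) are correct and essentially identical to the paper's: the orbit-closure/minimality argument for pushing a distal pair in $R_\pi=Q(X)$ to every base point, and the double-limit extraction of a distal pair in a fiber of $\pi$ from longer and longer sensitive blocks (using equicontinuity of $X_{eq}$ only on the $\pi$-images), are exactly what the paper does.

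The substantive direction (3)$\Rightarrow$(1), however, has a genuine gap, and the difficulty you flag at the end is not a technicality that careful bookkeeping will fix. Your plan is to lower-bound $d(T^{N+j}x,T^{N+j}x')$ via the chain $d(T^{N+j}x,T^{N+j}w)\ge\delta_0$, $d(T^{N+j}x',T^{N+j}w')$ small, and then a triangle inequality; but this chain requires control of the middle term $d(T^{N+j}w,T^{N+j}w')$, where $d(w,w')<\eta$ but $N=N(\eta)\to\infty$. No choice of $\eta$ relative to $l$, $\delta_0$ and the moduli of $T,\dots,T^l$ can control $T^N$ on a perturbation of $w$ (if it could, $X$ would be far from sensitive), and equicontinuity of $X_{eq}$ only constrains the $\pi$-images, not the points in the fiber. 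The raw $\ep$--$N$ definition of regional proximality simply does not put the actual orbit of $x$ anywhere useful. The missing ingredient, which the paper uses, is the dynamical characterization of regionally proximal pairs in minimal systems: for $(x,y)\in Q(X)$ and any neighborhood $W$ of $y$, the set $N(x,W)$ is a $\Delta$-set (\cite[Proposition 4.7]{HKZ2014}), while $N(V,V)$ is a $\Delta^{*}$-set for any opene $V$; hence they intersect. At such a time $n$ the true orbit point $T^nx$ lies in $W$ while some $x'\in V$ returns to $V$, and since $x$ and $y$ have $\delta$-separated orbits one can shrink $V,W$ so that $T^iV$ and $T^iW$ stay $\delta/2$-separated for $i=0,\dots,l$, yielding the block $\{n,\dots,n+l\}$ of separation for $(x,x')$ with no need to iterate $T^n$ on any perturbation. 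Without some such recurrence-theoretic upgrade of regional proximality, your argument does not close.
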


We start with

\begin{prop}\label{ye911-1} Let $(X,T)$ be a t.d.s. and $\pi:(X,T)\lra (X_{eq},T_{eq})$ be the factor map.  If $(X,T)$ is block $\F_t$-sensitive
then $\pi$ is not proximal. 
\end{prop}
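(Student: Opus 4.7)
The plan is to argue by contraposition: assuming block $\F_t$-sensitivity with constant $\delta > 0$, I will construct a pair $(x^*,y^*) \in R_\pi$ which is distal, thereby contradicting proximality of $\pi$. The idea is to use a standard compactness extraction that promotes arbitrarily long $\delta$-separated blocks near a base point $x$ into an orbit-wide $\delta$-separated pair in the same $\pi$-fiber, with equicontinuity of $(X_{eq},T_{eq})$ ensuring the limit pair stays in that fiber.

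Fix any $x \in X$ and apply block $\F_t$-sensitivity to the neighborhoods $B(x, 1/l)$ for $l \in \N$. This yields points $y_l \in B(x, 1/l)$ and integers $m_l$ such that $d(T^{m_l+j}x, T^{m_l+j}y_l) > \delta$ for every $j \in \{0, 1, \ldots, l\}$. By compactness, pass to a subsequence along which $T^{m_l}x \to x^*$ and $T^{m_l}y_l \to y^*$ in $X$. For each fixed $j \geq 0$, once $l \geq j$ we have $d(T^j(T^{m_l}x), T^j(T^{m_l}y_l)) > \delta$, so continuity of $T^j$ gives $d(T^j x^*, T^j y^*) \geq \delta$. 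Hence $(x^*,y^*)$ is a distal pair; in particular it is not proximal.

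To finish, I would verify $\pi(x^*) = \pi(y^*)$. Since $(X_{eq}, T_{eq})$ is equicontinuous, for any $\eps > 0$ there is $\eta > 0$ with $d_{eq}(u,v) < \eta$ forcing $d_{eq}(T_{eq}^n u, T_{eq}^n v) < \eps$ for all $n \geq 0$. Since $y_l \to x$, continuity of $\pi$ gives $d_{eq}(\pi(x), \pi(y_l)) < \eta$ for all sufficiently large $l$, whence
$$d_{eq}(\pi(T^{m_l}x), \pi(T^{m_l}y_l)) = d_{eq}(T_{eq}^{m_l}\pi(x), T_{eq}^{m_l}\pi(y_l)) < \eps.$$
Passing to the limit along the chosen subsequence yields $d_{eq}(\pi(x^*), \pi(y^*)) \leq \eps$; since $\eps$ is arbitrary, $\pi(x^*) = \pi(y^*)$. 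Thus $(x^*, y^*) \in R_\pi \setminus P(X,T)$, contradicting proximality of $\pi$.

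The only point requiring care is coordinating the subsequential extraction: the indices $l$ used for convergence of $T^{m_l}x$ and $T^{m_l}y_l$ must simultaneously be large enough that the equicontinuity $\eta$-threshold applies, which is routinely handled by a diagonal argument. Beyond that, the proof is a straightforward compactness upgrade leveraging the defining feature of block $\F_t$-sensitivity, namely that the long $\delta$-separated blocks occur at a single $y_l$ (as opposed to points varying with $n$), so that translating by $T^{m_l}$ and taking limits produces a single pair of points with the desired orbit-wide separation.
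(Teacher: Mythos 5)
Your proof is correct and follows essentially the same route as the paper's: extract, via compactness, a limit pair from the arbitrarily long $\delta$-separated blocks to obtain a distal pair, and use equicontinuity of $(X_{eq},T_{eq})$ together with $y_l\to x$ to place that pair in a single $\pi$-fiber. The only cosmetic difference is that you shrink the neighborhood and lengthen the block along one sequence (so no diagonal argument is actually needed beyond taking tails of the fixed subsequence), whereas the paper runs a double limit, first over block lengths and then over neighborhood sizes.
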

\begin{proof}
Let $d,d'$ be the compatible metrics of $X,X_{eq}$ respectively.
Let $\ep_k>0$ with $\ep_k\ra 0$.  Then for each $k\in\N$, there is $0<\tau_k,
\tau_k'<\ep_k$ such that
if $d'(w_1,w_2)<\tau_k$ with $w_1,w_2\in X_{eq}$ then $d'(T^i_{eq}w_1,T^i_{eq}w_2)<\ep_k$ for any $i\in\Z_+$; and
if $w_1,w_2\in X$ with $d(w_1,w_2)<\tau_k'$ then $d'(\pi(w_1),\pi(w_2))<\tau_k$.

Pick $x\in X$ and put $U_k=B(\tau_k',x)$.
By the assumption $(X,T)$ is block $\F_t$-sensitive,
 thus for each $j\in \N$, there is $y_k^j\in U_k$ such that $F=\{n\in\Z_+:d(T^nx,T^ny_k^j)>\delta\}$
containing $\{a_k^j,a_k^j+1,\ldots, a_k^j+j\}$ (with $\delta$ the sensitive constant).

Without loss of generality we assume that $T^{a_k^j}x\ra z_k^1$ and
$T^{a_k^j}y_k^j\ra z_k^2$ when $j\ra \infty$. It is clear that
$d(T^iz_k^1,T^iz_k^2)\ge \delta$
for each $i\in\Z_+$. Now let $z_1=\lim_{k\ra \infty}  z_k^1$ and $z_2=\lim_{k\ra \infty}  z_k^2$. We have $d(T^iz_1,T^iz_2)\ge \delta$
for each $i\in\Z_+$.

Now we show that $\pi(z_1)=\pi(z_2)$. Since $y_k^j\in U_k$, it is clear that $d'(\pi(x),\pi(y_k^j))<\tau_k$ and thus we have
$d'(T^i_{eq}\pi(x),T^i_{eq}\pi(y_k^j))<\ep_k$ for each $i\in\Z_+$. Particularly, $$d'(T^{a_k^j}_{eq}\pi(x),T^{a_k^j}_{eq}\pi(y_k^j))<\ep_k$$ for each $j\in\N$.
This implies that $d'(\pi(z_k^1),\pi(z_k^2))\le \ep_k$, and hence $d'(\pi(z_1),\pi(z_2))=0$.
We have proved that $\pi(z_1)=\pi(z_2)$. This indicates that $\pi$ is not proximal, finishing the proof.
\end{proof}

\begin{proof}[Proof of Theorem~\ref{thm:block-F-t-sensitive}]

 (1)$\Rightarrow$(3) follows from the above propostion.

(3)$\Rightarrow$(2)
There exists a regional proximal pair $(z_1,z_2)$ which is not proximal.
Let $\delta=\frac{1}{2}\inf_{n\in\Z_+} d(T^nz_1,T^nz_2)>0$.
Fix a point $x\in X$. As $z_1$ is a minimal point of $(X,T)$,
there exists a sequence of positive numbers $\{n_i\}$
such that $\lim_{i\to\infty}T^{n_i}z_1\to x$.
By the compactness of $X$, without loss of generality, assume that
$\lim_{i\to\infty}T^{n_i}z_2\to y$.
Then $(x,y)$ is regional proximal, since $Q(X,T)$ is closed and $T\times T$-invariant.
We also have $\inf_{n\in\Z_+} d(T^nx,T^ny)\geq \inf_{n\in\Z_+} d(T^nz_1,T^nz_2)>\delta$.

(2)$\Rightarrow$(1)
Fix $x\in X$ and a neighborhood $U$ of $x$ and $l\in \N$.
There exists $y\in X$ such that $(x,y)$ is regional proximal and
$\inf_{n\in\Z_+} d(T^nx,T^ny)>\delta$.
Choose small enough neighborhood $V\subset U$ of $x$ and neighborhood $W$ of $y$ such that
$\min_{0\le i\le l} d(T^iV,T^iW)>\frac{1}{2}\delta$

As $(x,y)$ is regional proximal, $N(x,W)$ is a $\Delta$-set \cite[Proposition 4.7]{HKZ2014}.
We also have that $N(V,V)$ is a $\Delta^*$-set \cite[Page 177]{F1981}.
Then $N(x,W)$ intersects $N(V,V)$.
Pick $n\in N(x,W)\cap N(V,V)$ and $x'\in V\cap T^{-n}V$.
Then $T^nx\in W$, $T^nx'\in V$.
This implies that $d(T^{n+i}x,T^{n+i}x')\geq\min_{0\le i\le l} d(T^iV,T^iW)>\frac{1}{2}\delta$ for $i=0,1,\ldots,l$.
Therefore, $(X,T)$ is block $\F_t$-sensitive.
\end{proof}

We have the following corollary.

\begin{cor} There is a minimal system which is $\F_t$-sensitive and not
strongly $\F_t$-sensitive.
\end{cor}
\begin{proof} There is a minimal system such that $\pi:X\lra X_{eq}$
is a proximal extension and not almost one-to-one extension \cite{GW1979}.
Then $(X,T)$ is $\F_t$-sensitive by \cite[Theorem 3.1]{HKZ2014},
and is not strongly $\F_t$-sensitive by Proposition \ref{ye911-1}.
\end{proof}

\subsection{Block \texorpdfstring{$\F_{ip}$}{Fip}-sensitivity
and strong \texorpdfstring{$\F_{fip}$}{Fip}-sensitivity}
In this subsection, we investigate block $\F_{ip}$-sensitivity, strong $\F_{fip}$-sensitivity and show Theorem  B.
In this subsection we assume that $T$ is a homeomorphism (since some results we use are stated for homeomorphisms and
it will take some pages to show they are true for continuous and surjective maps).

Recall that a t.d.s. $(X,T)$ is called \emph{ block $\F_{ip}$-sensitive} if there is $\delta>0$
such that for each $x\in X$, every neighborhood $U_x$ of $x$ and $l\in \N$ there is $y_l\in U$
such that $\{n\in\Z_+: d(T^nx,T^ny_l)>\delta\}$ contains a finite IP-set of length  $l$.
By the Ramsey property of $\F_{fip}$, an equivalent definition can be stated as follows:
there is $\delta>0$ such that for any opene $U$ of $X$ and $l\in \N$ there are $y_l,z_l\in U$
such that $\{n\in\Z_+: d(T^ny_l,T^nz_l)>\delta\}$ contains a finite IP-set of length $l$.
As before we will show the following theorem which covers Theorem~ B.


\begin{thm}\label{925--1}
Let $(X,T)$ be a minimal system.
Then the following statements are equivalent.
\begin{enumerate}
\item  $(X,T)$ is  strongly $\F_{fip}$-sensitive;
\item  $(X,T)$ is  block $\F_{ip}$-sensitive;
\item there exists $\delta>0$ such that for every $x\in X$
there exists $y\in X$ such that $(x,y) \in \RP^{[\infty]}$ with $d(x,y)\geq\delta$;
\item   $\phi: X\lra X_{\infty}$ is not almost one-to-one.
\end{enumerate}
\end{thm}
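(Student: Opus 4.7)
I will prove the cycle $(1)\Rightarrow(2)\Rightarrow(4)\Leftrightarrow(3)\Rightarrow(1)$.

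The implication $(1)\Rightarrow(2)$ is immediate from the definitions, since any set in $\F_{fip}$ contains a finite IP-set of every length $l$, so the same witnessing pair serves uniformly in $l$. For the equivalence $(3)\Leftrightarrow(4)$: the direction $(3)\Rightarrow(4)$ is trivial, since an almost automorphic point would form a singleton $\RP^{[\infty]}$-class and violate (3). Conversely, $(4)\Rightarrow(3)$ follows from Lemma~\ref{ge0}, applicable because $T$ is a homeomorphism and hence $X_\infty$ is invertible; it furnishes a uniform positive lower bound $l$ on the diameters of the $\phi$-fibres. Since these fibres are precisely the $\RP^{[\infty]}$-classes, every $x\in X$ admits $y$ with $(x,y)\in\RP^{[\infty]}$ and $d(x,y)\ge l/2$.

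For $(2)\Rightarrow(4)$ I argue contrapositively. Suppose $\phi$ is almost one-to-one at $y_0$, so $\phi^{-1}(y_0)=\{x_0\}$. Given any $\delta>0$, take a neighborhood $W\ni x_0$ with $\diam W<\delta$ and a neighborhood $V\ni y_0$ in $X_\infty$ with $\phi^{-1}(V)\subset W$. Since $X_\infty$ is an inverse limit of minimal nilsystems and hence distal, one can shrink $V$ to $V_1$ with the property that $T_\infty^n V_1\subset V$ whenever $T_\infty^n y_0\in V_1$; then $U=\phi^{-1}(V_1)$ satisfies $T^n U\subset W$ for every $n\in N(y_0,V_1)$. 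The crucial input is that in an $\infty$-step nilsystem the return-time set $N(y_0,V_1)$ is an $\F_{fip}^*$-set: there exists $l_0=l_0(V_1)$ such that $\Zp\setminus N(y_0,V_1)$ contains no finite IP-set of length $l_0$. This is the Nil-Bohr / IP$^*$ recurrence of minimal nilsystems (Host-Kra / Bergelson theory), transferred through the inverse limit. Consequently, for every $y,z\in U$ the separation set $\{n:d(T^ny,T^nz)>\delta\}$ contains no finite IP-set of length $l_0$, defeating block $\F_{ip}$-sensitivity.

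The main step is $(3)\Rightarrow(1)$, for which the key combinatorial input is the Shao-Ye characterization of $\RP^{[d]}$ for minimal systems (see \cite{SY2012}): $(x,y)\in\RP^{[d]}$ iff for every neighborhood $W$ of $y$ there exist $n_1,\ldots,n_d\in\N$ with $T^{\sum_{i\in\alpha}n_i}x\in W$ for every nonempty $\alpha\subset\{1,\ldots,d\}$, i.e.\ $N(x,W)$ contains a length-$d$ finite IP-set. Letting $d\to\infty$, $(x,y)\in\RP^{[\infty]}$ gives $N(x,W)\in\F_{fip}$ for any such $W$. Given opene $U$ and the $\delta$ of (3), pick $x\in U$ and a partner $y$; choose $V\subset U$ and $W$ small neighborhoods of $x,y$ with $d(V,W)>\delta/3$. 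Paraphrasing the $(2)\Rightarrow(1)$ argument of Theorem~\ref{thm:block-F-t-sensitive}: for each $l$, combine the length-$l$ finite IP-set supplied by $N(x,W)\in\F_{fip}$ with an IP-Poincar\'e multi-recurrence of $V$ to itself --- so that the intersection $N(x,W)\cap N(V,V)$ still carries a length-$l$ finite IP-set --- to extract $n_1,\ldots,n_l$ whose finite IP-sums lie in this intersection. Then $\hat y\in V\cap\bigcap_i T^{-n_i}V\subset U$ exists, and the pair $(x,\hat y)\in U\times U$ satisfies $d(T^n x,T^n\hat y)>\delta/3$ on $FS(n_1,\ldots,n_l)$, yielding strong $\F_{fip}$-sensitivity with constant $\delta/3$.

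\emph{Main obstacle.} The hard step is the IP-Poincar\'e multi-recurrence in $(3)\Rightarrow(1)$: preserving the $\F_{fip}$-property after intersecting with $N(V,V)$. This requires a Hindman-style strengthening of Furstenberg's $\Delta^*$-recurrence, namely that $N(V,V)$ is sufficiently ``FIP-large'' to meet every long enough finite IP-set supplied by $N(x,W)$. This is the principal technical content of the theorem and is where the argument diverges from the analogous $\F_t$-case of Theorem~\ref{thm:block-F-t-sensitive}, where the matching step uses only the $\F_\Delta^*$-property of $N(V,V)$.
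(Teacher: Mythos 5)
Your architecture differs from the paper's: the paper closes the cycle as $(1)\Rightarrow(2)\Rightarrow(3)\Rightarrow(4)\Rightarrow(1)$, whereas you route through $(2)\Rightarrow(4)$, and that is where the proposal breaks. In the contrapositive argument for $(2)\Rightarrow(4)$ you shrink $V$ to $V_1$ so that $T_\infty^nV_1\subset V$ whenever $T_\infty^ny_0\in V_1$, justifying this by distality of $X_\infty$. Distality does not give this: it is an equicontinuity property along return times, and nilsystems are distal but in general not equicontinuous. For the skew product $T_{\alpha,2}(\theta_1,\theta_2)=(\theta_1+\alpha,\theta_2+\theta_1)$ of Example \ref{last-e} one has $T^n_{\alpha,2}(\theta_1,\theta_2)-T^n_{\alpha,2}(0,0)=(\theta_1,\theta_2+n\theta_1)$, so for large $n$ in the return-time set of $(0,0)$ to a small ball $V_1$ the image $T^n_{\alpha,2}V_1$ smears over the whole second coordinate; no choice of $V_1$ works, and the containment $T^nU\subset W$ you need never becomes available. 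This is exactly why the paper cannot imitate its own Theorem \ref{sensitive} (which runs your argument, but for the \emph{equicontinuous} factor, where the metric may be taken invariant) and instead proves $(2)\Rightarrow(3)$ directly through the long construction of Proposition \ref{925-4}: nested neighborhoods, the measure-theoretic recurrence of Lemma \ref{s-times} to force an orbit to avoid a fixed ball along arbitrarily long finite IP-sets, and a Ramsey-plus-compactness argument producing a partner $y$ with $N(z,W')\in\F_{fip}$ for every neighborhood $W'$ of $y$, whence $(y,z)\in\RP^{[\infty]}$ by Lemma \ref{SY33}. Your $(4)\Rightarrow(3)$ via Lemma \ref{ge0} is fine, but without a valid $(2)\Rightarrow(4)$ or $(2)\Rightarrow(3)$ the cycle does not close.

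The step $(3)\Rightarrow(1)$ also has two problems. First, the multi-recurrence you flag as the main obstacle is the paper's Lemma \ref{s-times}, deduced from Gillis's Lemma \ref{giliss} applied to an invariant measure of full support; it is a measure-theoretic pigeonhole rather than a Hindman-type theorem, and it has to be proved, not merely named. Second, and more seriously, even granting that lemma your construction yields for each $l$ a partner $\hat y=\hat y_l$ whose separation set from $x$ contains a finite IP-set of length $l$; since $\hat y_l$ varies with $l$, this proves only block sensitivity, i.e.\ statement (2), not the existence of a single pair whose separation set lies in $\F_{fip}$ as (1) demands. The paper repairs precisely this quantifier by iterating Lemma \ref{s-times} to build nested sets $U^{j+1}=U^j\cap\bigcap_{l\in FS\{q_i^j\}_{i=1}^j}T^{-l}U^j$ of positive measure and extracting one point $z$ in the intersection of all of them, which then works for all lengths simultaneously (Propositions \ref{925-1} and \ref{925-3}). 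On the positive side, your idea of running this argument directly from hypothesis (3), with an arbitrary $\RP^{[\infty]}$-partner, would let one avoid the paper's case split in $(4)\Rightarrow(1)$ between $\phi$ proximal and $\phi$ non-proximal --- but only after both of the above points are supplied.
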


To prove Theorem~\ref{925--1} we need some preparation.
The following lemma is from \cite{Gillis}.
\begin{lem}\label{giliss}
Let $(X,\mathcal{B},\mu)$ be a
probability space, and $\{E_i\}_{i=1}^\infty$ be a sequence of
measurable sets with $\mu(E_i)\ge a>0$ for some constant $a$.
Then for any $k\ge 1$ and $\ep>0$ there is $N=N(a, k,
\ep)$ such that for any tuple $\{ s_1<s_2<\cdots <s_n\}$ with $n\ge
N$ there exist $1\le t_1<t_2<\cdots<t_k\le n$ with
\begin{align}\label{bds-key}
\mu(E_{s_{t_1}}\cap E_{s_{t_2}}\cap \cdots \cap E_{s_{t_{k}}})\ge
a^k-\ep.
\end{align}
\end{lem}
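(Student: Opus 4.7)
The plan is to use a direct averaging (second-moment style) argument rather than induction. The idea is to reduce everything to estimating the $L^k$-norm of the indicator sum $g=\sum_{i=1}^n \mathbf{1}_{E_{s_i}}$, and then extract a good $k$-tuple by pigeonholing.

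First, I would record the $L^1$ bound $\int g\,d\mu=\sum_{i=1}^n\mu(E_{s_i})\ge na$. By Jensen's inequality (or Hölder, using $\mu(X)=1$) one has
$$\int g^k\,d\mu \ge \Bigl(\int g\,d\mu\Bigr)^{k}\ge (na)^k.$$
On the other hand, expanding the $k$-th power gives
$$\int g^k\,d\mu=\sum_{(i_1,\dots,i_k)\in\{1,\dots,n\}^k}\mu(E_{s_{i_1}}\cap\cdots\cap E_{s_{i_k}}).$$
I would then split this sum into the part indexed by $k$-tuples with all coordinates distinct and the part where at least two coordinates coincide. The latter has at most $n^k-P(n,k)$ terms (with $P(n,k)=n(n-1)\cdots(n-k+1)$), each bounded by $\mu(X)=1$.

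Next, this yields the key inequality
$$\sum_{\substack{(i_1,\dots,i_k)\\ \text{distinct}}}\mu(E_{s_{i_1}}\cap\cdots\cap E_{s_{i_k}})\ \ge\ (na)^k-(n^k-P(n,k)).$$
Since the left-hand sum has exactly $P(n,k)$ terms, the pigeonhole principle produces distinct indices $i_1,\dots,i_k\in\{1,\dots,n\}$ with
$$\mu(E_{s_{i_1}}\cap\cdots\cap E_{s_{i_k}})\ \ge\ \frac{(na)^k-(n^k-P(n,k))}{P(n,k)}\ =\ 1+\frac{n^k}{P(n,k)}(a^k-1).$$
Since $a\le 1$ and $n^k/P(n,k)=\prod_{j=0}^{k-1}\frac{n}{n-j}\to 1$ as $n\to\infty$, the right-hand side tends to $a^k$ as $n\to\infty$. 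Hence there exists $N=N(a,k,\ep)$, depending only on $a$, $k$ and $\ep$, such that this lower bound exceeds $a^k-\ep$ whenever $n\ge N$. After reordering the chosen distinct indices in increasing order (which does not affect the intersection), we obtain the desired $1\le t_1<t_2<\cdots<t_k\le n$.

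There is no real obstacle; the only care needed is the elementary estimate $n^k/P(n,k)\to 1$, which is where the quantitative $N(a,k,\ep)$ is produced, and the observation that the intersection is symmetric in its indices so that the ordering at the end is automatic. The key insight is that the symmetric expansion of $\int g^k\,d\mu$ together with Jensen's inequality packages the whole argument without needing any induction on $k$.
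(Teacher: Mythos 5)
Your argument is correct. Note that the paper does not prove this lemma at all: it is quoted directly from Gillis's 1936 note, so there is no in-paper proof to compare against. Your proof is the standard self-contained moment/counting argument for that classical result: the Jensen bound $\int g^k\,d\mu\ge (na)^k$ for $g=\sum_{i=1}^n\mathbf{1}_{E_{s_i}}$, the symmetric expansion of $\int g^k\,d\mu$, discarding the at most $n^k-P(n,k)$ degenerate tuples (each contributing at most $1$), and pigeonholing over the $P(n,k)$ ordered distinct tuples to get a tuple with
\[
\mu(E_{s_{i_1}}\cap\cdots\cap E_{s_{i_k}})\ \ge\ 1+\frac{n^k}{P(n,k)}\bigl(a^k-1\bigr),
\]
which tends to $a^k$ because $n^k/P(n,k)\to 1$; this yields an $N$ depending only on $a$, $k$, $\ep$ as required. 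The only points worth making explicit are that $a\le 1$ (so the correction term is nonpositive and the limit is approached from below, which is exactly what the $a^k-\ep$ formulation allows), and that $N$ should be taken at least $k$ so that $P(n,k)>0$. Both are trivial, and the reordering step at the end is indeed automatic by symmetry of the intersection.
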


We will use the next lemma derived from Lemma \ref{giliss}.
\begin{lem}\label{s-times} Let $(X,T)$ be a t.d.s. with $\mu\in M(X, T)$. Let $U\in \mathcal{B}_X$ with $a=\mu(U)>0$.
Then there is $n=n(a)$ such that for any finite IP-set $FS(\{p_i\}_{i=1}^n)$ there is $q\in FS(\{p_i\}_{i=1}^n)$ such that $\mu(U\cap T^{-q}U)\ge \frac{1}{2}a^2$.
\end{lem}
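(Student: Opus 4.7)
The plan is to deduce this directly from Lemma \ref{giliss} applied to the sequence of preimages of $U$ under powers of $T$. Since $\mu$ is $T$-invariant, every set $E_i := T^{-i}U$ has $\mu(E_i) = a$, so the hypothesis of Gillis' lemma is satisfied with the same constant $a$ along the full sequence $\{E_i\}_{i\in\Zp}$.

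The key observation is that a finite IP-set encodes a sequence of partial sums: given $FS(\{p_i\}_{i=1}^n)$, set $s_k = p_1 + p_2 + \cdots + p_k$ for $k = 1, \ldots, n$. These are $n$ strictly increasing positive integers, and, crucially, every difference $s_{t_2} - s_{t_1}$ with $t_1 < t_2$ equals $p_{t_1+1} + \cdots + p_{t_2}$, which is itself an element of $FS(\{p_i\}_{i=1}^n)$. So I just need to find two indices $t_1 < t_2$ whose corresponding partial-sum preimages have large intersection.

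Next, I would apply Lemma \ref{giliss} with $k = 2$ and $\varepsilon = \tfrac{1}{2}a^2$, which furnishes a threshold $N = N(a, 2, \tfrac{1}{2}a^2)$; I set $n(a) := N$. For any finite IP-set $FS(\{p_i\}_{i=1}^n)$ with $n \ge n(a)$, the lemma applied to the tuple $s_1 < s_2 < \cdots < s_n$ produces $1 \le t_1 < t_2 \le n$ with
\[
\mu\bigl(E_{s_{t_1}} \cap E_{s_{t_2}}\bigr) \;\ge\; a^2 - \tfrac{1}{2}a^2 \;=\; \tfrac{1}{2}a^2.
\]
Using $T$-invariance to shift by $T^{s_{t_1}}$ (i.e., $\mu(T^{-s_{t_1}}U \cap T^{-s_{t_2}}U) = \mu(U \cap T^{-(s_{t_2}-s_{t_1})}U)$), I set $q := s_{t_2} - s_{t_1}$, which lies in the IP-set as noted above, and conclude $\mu(U \cap T^{-q}U) \ge \tfrac{1}{2}a^2$.

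There is no real obstacle here; the content of the lemma is entirely packaged in Gillis' lemma, and the only ``trick'' is recognizing that consecutive partial sums of the $p_i$ give an increasing sequence whose pairwise differences remain inside the finite IP-set. So the proof is essentially a one-paragraph invocation of Lemma \ref{giliss}, with the explicit choice $n(a) = N(a, 2, a^2/2)$.
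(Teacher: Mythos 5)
Your proof is correct and is essentially the paper's own argument: the authors likewise apply Lemma \ref{giliss} with $k=2$ and $\ep=\tfrac{1}{2}a^2$ to the tuple $T^{-p_1}U,\ldots,T^{-p_1-\cdots-p_n}U$, i.e.\ to the partial sums $s_k=p_1+\cdots+p_k$. The observation that the difference $s_{t_2}-s_{t_1}$ lies in $FS(\{p_i\}_{i=1}^n)$, together with $T$-invariance, is exactly the (implicit) content of their one-line proof, which you have simply written out in full.
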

\begin{proof} Apply Lemma \ref{giliss} to $k=2$, $\ep=\frac{1}{2}a^2$ and consider the finite tuple
$$T^{-p_1}U, \ldots, T^{-p_1-\ldots-p_n}U.$$
\end{proof}

The notion of {\it central set} was introduced in \cite{F1981}. It is known that a central set contains an IP-set \cite[Proposition 8.10]{F1981}.

\begin{prop}\label{925-1} Let $(X,T)$ and $(Y,S)$ be minimal.  If $\pi:X\lra Y$ is proximal
and not almost one-to-one, then $(X,T)$ is strongly $\F_{ip}$-sensitive.
\end{prop}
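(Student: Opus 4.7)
The plan is to exploit the uniformly large fiber diameter provided by Lemma~\ref{ge0} together with the proximality of $\pi$, using Ellis semigroup considerations and the Gillis-type Lemma~\ref{s-times} to produce an IP-set of large-distance times for a pair in any prescribed open set.

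First, since $\pi$ is not almost one-to-one and (by Lemma~\ref{natral-1}) we may pass to a natural extension if needed to arrange that $(Y,S)$ is invertible, Lemma~\ref{ge0} yields $l:=\inf_{y\in Y}\diam(\pi^{-1}(y))>0$. Set $\delta=l/8$. Fix an arbitrary opene $U\subset X$. Pick $(x_0,x_0')\in X\times X$ with $\pi(x_0)=\pi(x_0')$ and $d(x_0,x_0')\geq l$, which exists because every fiber has diameter at least $l$. Since $\pi$ is proximal, $(x_0,x_0')\in P(X,T)$; by Ellis's theory there is a minimal idempotent $u\in E(X,T)$ with $ux_0=ux_0'=z$ for some $z\in X$, and writing $u=p\text{-}\lim T^n$ for an idempotent ultrafilter $p\in\beta\N$, the set $\{n:T^n(x_0,x_0')\in V\}$ lies in $p$, and hence is an IP-set, for every neighborhood $V$ of $(z,z)$. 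By minimality of $(X,T)$, translating via $T^m$ for appropriate $m$, I may assume $z\in U$; then $A:=\{n:T^nx_0\in U,\ T^nx_0'\in U\}\in p$ is an IP-set of small-distance returns into $U\times U$.

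The decisive step is to convert this IP-set of small-distance visits into an IP-set of large-distance times for some pair $(x,y)\in U\times U$. I would argue by a compactness/diagonal extraction built on a finite (block) version. Fix a fully supported $\mu\in M(X,T)$, which exists by minimality, and work in $(X\times X,T\times T,\mu\otimes\mu)$. For each $L\in\N$, using Lemma~\ref{s-times} iteratively together with the proximal structure above, construct a pair $(x_L,y_L)\in U\times U$ and a finite IP-sequence $(n_1^{(L)},\ldots,n_L^{(L)})$ with
\[
FS(n_1^{(L)},\ldots,n_L^{(L)})\subset\{n\in\Z_+:d(T^nx_L,T^ny_L)>\delta\}.
\]
At each inductive stage, the new summand $n_{k+1}^{(L)}$ is selected via Lemma~\ref{s-times} inside a large neighborhood of the starting configuration $(x_0,x_0')$ in $X\times X$ (a region of distance at least $l-o(1)>2\delta$), which is reachable along IP-times by the idempotent argument applied to the orbit closure of $(x_0,x_0')$. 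Passing to a diagonal subsequence so that $(x_L,y_L)\to(x,y)\in\overline{U\times U}$ and the initial segments $(n_1^{(L)},\ldots,n_k^{(L)})$ stabilize as $L\to\infty$ to an infinite sequence $(n_i)$, continuity of $d\circ(T^n\times T^n)$ gives $FS(n_i)\subset\{n:d(T^nx,T^ny)\geq\delta\}$. Applying the construction first to a slightly smaller opene $U'\Subset U$ ensures $(x,y)\in U\times U$, yielding the required witnesses for strong $\F_{ip}$-sensitivity.

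The main obstacle is the inductive block step: producing finite IP-sequences of large-distance times inside $U\times U$. The subtle point is that the IP-sets naturally provided by the idempotent $p$ record small-distance (near-diagonal) returns, whereas the required IP-sets live in the complementary large-distance region. The bridge is the following dichotomy exploited along the orbit of $(x_0,x_0')$: the same orbit closure $K=\overline{\Orb_{T\times T}(x_0,x_0')}$ contains both the diagonal point $(z,z)$ (small distance, giving the IP-recurrence to $U\times U$) and the point $(x_0,x_0')$ itself (large distance, giving $d>2\delta$). The Gillis lemma is used to extract, from the IP-structure of returns near the diagonal, enough finite IP-sums that simultaneously land in a neighborhood of $(x_0,x_0')$ — this produces the finite IP-sequences of large-distance times required at each stage.
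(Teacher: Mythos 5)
Your proposal assembles the right ingredients (Lemma~\ref{ge0} for uniformly large fibers, proximality, IP-sets from idempotents, and the Gillis-type Lemma~\ref{s-times}), but there is a genuine gap at precisely the step you call decisive. The IP-set you actually construct, $A=\{n: T^nx_0\in U,\ T^nx_0'\in U\}$, records times at which the proximal pair comes \emph{together} inside $U$; what you need is an IP-set of times at which two points of $U$ are $\delta$-\emph{separated}, and your ``bridge'' does not produce one. First, the assertion that a neighborhood $W$ of $(x_0,x_0')$ is ``reachable along IP-times by the idempotent argument applied to the orbit closure of $(x_0,x_0')$'' amounts to saying that $N((x_0,x_0'),W)$ contains an IP-set, i.e.\ that $(x_0,x_0')$ is a recurrent point of $(X\times X,T\times T)$. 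A nontrivial proximal pair is never a minimal point of the product (its orbit closure meets the diagonal, which is closed and invariant), and nothing forces it to be recurrent: the idempotent $u$ only gives IP-returns to neighborhoods of the \emph{diagonal} point $(ux_0,ux_0')$. Indeed the chosen pair could well be asymptotic, in which case $\{n: d(T^nx_0,T^nx_0')>\delta\}$ is finite and your finite IP-blocks of ``large-distance times'' along the orbit of $(x_0,x_0')$ simply do not exist. Second, Lemma~\ref{s-times} cannot repair this: it only produces, for a single set of positive measure, self-returns along finite sub-IP-sums; it gives no transitions from the near-diagonal region $U\times U$ to the off-diagonal region near $(x_0,x_0')$. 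So the finite IP-sequences your diagonal extraction is meant to stabilize are never actually constructed.

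The paper's proof decouples the two roles that you are asking the single pair $(x_0,x_0')$ to play. Fix $x$ (the center of $U$) and a point $x_1$ in the same fiber with $d(x,x_1)\ge l/2$; since $\pi$ is proximal and $x_1$ is minimal, $N(x,V)$ is a \emph{central} set for every small neighborhood $V$ of $x_1$, hence contains an IP-set $FS(\{p_i\})$ --- these are times at which the orbit of $x$ has moved \emph{far} from $x$. Lemma~\ref{s-times} is then applied to the single set $U$ (a small neighborhood of $x$, of positive measure since $\mu$ has full support) along $FS(\{p_i\})$, yielding a sub-IP-set $FS(\{q_i\})$ and a point $z\in\overline{U}$ with $T^nz\in\overline{U}\subset U'$ for all $n\in FS(\{q_i\})$. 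At those times $T^nx\in V$ while $T^nz$ stays near $x$, and $d(U',V)>\delta$ gives the separation. In short: one orbit leaves along a central (hence IP) set of times, and the other is made to stay home along a sub-IP by Gillis; the proximal pair itself is never required to separate along IP-times. If you reorganize your argument around the centrality of $N(x,V)$ rather than around joint recurrence of $(x_0,x_0')$, it becomes the paper's proof.
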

\begin{proof} By Lemma \ref{ge0}, $l=\inf_{y\in Y}\diam(\pi^{-1}(y))>0$. For each $y\in Y$, choose $x_1(y),x_2(y)\in \pi^{-1}(y)$
with $d(x_1(y),x_2(y))=l(y)\geq l$.

For $x\in X$, let $y=\pi(x)$. Then we have $d(x,x_1(y))\ge \frac{l}{2}$ or $d(x,x_2(y))\ge \frac{l}{2}$.
Without loss of generality, we assume that $d(x,x_1(y))\ge \frac{l}{2}$. Then $(x,x_1(y))$ is proximal.

Let $\delta=\frac{l}{8}$ and $U'$, $V$ be open neighborhoods of $x, x_1(y)$ with $diam(U'),\ diam(V)<\frac{l}{8}$ respectively.
Then $d(U',V)>\delta$. Choose a smaller $U$ with the same properties and $U'\supset \overline{U}$.
We know that $N(x,V)$ is a central set and hence it contains an IP-set $FS(\{p_i\}_{i=1}^{\infty})$. We are
going to show that there is $z\in U'$ such that $d(T^lx, T^lz)>\delta$ for all $l$ in a sub IP-set of $FS(\{p_i\}_{i=1}^{\infty})$.

To do this let $\mu\in M(X,T)$, then $a=\mu(U)>0$. Applying Lemma \ref{s-times} to $U$ there are $n_1$ and $q_1\in FS(\{p_i\}_{i=1}^{n_1})$ such that $\mu(U\cap T^{-q_1}U)\ge \frac{1}{2}a^2$.

Let $U_1=U\cap T^{-q_1}U$ and apply Lemma \ref{s-times} to $U_1$ there are $n_2$ and $q_2\in
FS(\{p_i\}_{i=n_1+1}^{n_2})$ such that $\mu(U_1\cap T^{-q_2}U_1)\ge \frac{1}{8}a^4$.
Note that we have $U\cap T^{-q_1}U\cap T^{-q_2}U\cap T^{-q_1-q_2}U\not=\emptyset.$

Inductively for any $k\in \N$ we obtain $n_1,\ldots,n_k$, $U_1,\ldots, U_k$ and $q_1,\ldots, q_k$
such that $q_{j+1}\in FS(\{p_i\}_{i=n_j+1}^{n_{j+1}})$,
$U_{j+1}=U_j\cap T^{-q_{j+1}}(U_j)$ with $\mu(U_{j+1})\ge C_j>0$ for $j=0,\ldots, k-1$ (set  $U_0=U$ and $n_0=0$).
This implies that
$$\mu(U\bigcap \bigcap_{l\in FS(\{q_i\}_{i=1}^k)}T^{-l}U)>0.$$

Thus, for each $k\in\N$ there is $z_k\in U$ such that $T^l(z_k)\in U$ for all $l\in FS(\{q_i\}_{i=1}^k).$
Without loss of generality, assume that $z=\underset{{k\ra \infty}}{\lim} z_k$, then $T^l(z)\in cl(U)\subset U'$
for $l\in FS(\{q_j\}_{j=1}^{\infty})$. We know $d(T^lx, T^lz)>\delta$ for $l\in FS(\{q_j\}_{j=1}^{\infty})$.
Since $FS(\{q_j\}_{j=1}^{\infty})\subseteq FS(\{p_j\}_{j=1}^{\infty})$,
this implies that $(X,T)$ is strongly $\F_{ip}$-sensitive.


\end{proof}

By \cite[Theorem 3.2]{SY2012} we know that for any $d\in\N$ and any minimal t.d.s. $(X,T)$, $(x,y)\in \RP^{[d]}(X)$ if and only if
for any neighborhood $V$ of $y$, $N(x,V)$ contains a finite IP-set of length $d+1$.
As $\RP^{[\infty]}(X)=\underset{d=1}{\overset{\infty}{\cap}}\RP^{[d]}(X)$, so we have
\begin{lem}\label{SY33} Let $(X,T)$ be minimal and $(x,y)\in X\times X$. Then  $(x,y)\in \RP^{[\infty]}(X)$ if and only if
for any neighborhood $V$ of $y$, $N(x,V)\in \F_{fip}$.
\end{lem}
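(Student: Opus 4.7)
The plan is a direct unwinding of definitions, leveraging the finite-IP-set characterisation of $\RP^{[d]}$ stated in the paragraph preceding the lemma. Since $\RP^{[\infty]}(X)=\bigcap_{d\ge 1}\RP^{[d]}(X)$, both implications of the ``iff'' amount to swapping an ``over all $d$'' quantifier with an ``over all neighbourhoods $V$ of $y$'' quantifier, and no further dynamical input should be needed beyond the cited characterisation.

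For the $(\Rightarrow)$ direction I would fix a neighbourhood $V$ of $y$ and, for each $d\in\N$, apply the cited characterisation to $\RP^{[d]}(X)\ni (x,y)$ to extract a finite IP-set of length $d+1$ sitting inside $N(x,V)$. Since a single set $N(x,V)$ then contains finite IP-sets of arbitrarily large length, it lies in $\F_{fip}$ by the definition of that family. For the $(\Leftarrow)$ direction I would fix $d\in\N$ and an arbitrary neighbourhood $V$ of $y$; the hypothesis $N(x,V)\in\F_{fip}$ supplies a finite IP-set of length at least $d+1$ inside $N(x,V)$, and restricting the indexing set to a sub-interval of size $d+1$ gives a finite IP-set of length exactly $d+1$. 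Invoking the cited theorem then yields $(x,y)\in\RP^{[d]}(X)$, and letting $d$ run through $\N$ gives $(x,y)\in\RP^{[\infty]}(X)$.

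I do not foresee a genuine obstacle here. The only point that deserves momentary care is the monotonicity used in $(\Leftarrow)$, namely that any finite IP-set of length $L\ge d+1$ contains a finite IP-set of length exactly $d+1$; this is immediate from the definition of $\FS(\{p_i\}_{i\in I})$ by shrinking the indexing set $I$. Beyond this, the lemma is purely formal bookkeeping that transports the combinatorial hypothesis about return times through the intersection defining $\RP^{[\infty]}$.
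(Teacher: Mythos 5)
Your proof is correct and follows essentially the same route as the paper: the authors likewise quote the finite-IP characterisation of $\RP^{[d]}$ from Shao--Ye and simply intersect over $d$, treating the lemma as immediate. Your added remark about shrinking a long finite IP-set to one of length exactly $d+1$ is the only point of care, and it is handled correctly.
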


With the help of the above lemma and Lemma \ref{s-times} we are able to show
\begin{prop}\label{925-3} Let $(X,T)$ be minimal and $\pi:X\lra X_\infty$ is not proximal.
Then $(X,T)$ is strongly $\F_{fip}$-sensitive.
\end{prop}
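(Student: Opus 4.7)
The plan is to extract a distal pair from the fibres of $\pi$, transport it by minimality to every $x\in X$, and then combine Lemma \ref{SY33} with iterated applications of Lemma \ref{s-times} to drive a Baire category argument. Since $\pi$ is not proximal, pick $x_1\ne x_2$ with $\pi(x_1)=\pi(x_2)$ and $l:=\inf_n d(T^nx_1,T^nx_2)>0$; set $\delta=l/8$. For an arbitrary $x\in X$, by minimality find $n_k$ with $T^{n_k}x_1\to x$ and extract a subsequence so that $T^{n_k}x_2\to y$; closedness and $T$-invariance of $\RP^{[\infty]}$ then give $(x,y)\in\RP^{[\infty]}$ and $d(T^nx,T^ny)\ge l$ for all $n\ge 0$. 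Given an opene $U$ containing $x$, shrink it so that $\diam U<l/4$, and choose a neighbourhood $V$ of $y$ with $\diam V<l/4$; then $d(U,V)>\delta$, and Lemma \ref{SY33} gives $N(x,V)\in\F_{fip}$.

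For each $k\in\N$ define
\[
B_k=\bigl\{z\in U:\ \{n\in\Z_+:d(T^nx,T^nz)>\delta\}\ \text{contains a finite IP-set of length}\ k\bigr\}.
\]
Each $B_k$ is a union of finite intersections of the open conditions $d(T^lx,T^lz)>\delta$, hence open in $U$. The core claim is that $B_k$ is also dense in $U$. Given an opene $W\subset U$, fix an invariant Borel probability measure $\mu$ on $X$ (so $\mu(W)>0$ by minimality), choose $N_k$ large enough, and select a finite IP-set $FS(\{p_i\}_{i=1}^{N_k})\subset N(x,V)$, which exists because $N(x,V)\in\F_{fip}$. Partition the generators $\{p_i\}_{i=1}^{N_k}$ into $k$ disjoint chunks and inductively apply Lemma \ref{s-times} in chunk $j$ to select $q_j$ with $\mu(W_j)\ge\frac{1}{2}\mu(W_{j-1})^2$, where $W_0=W$ and $W_j=W_{j-1}\cap T^{-q_j}W_{j-1}$. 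Because the chunks are disjoint, every element of $FS(\{q_j\}_{j=1}^k)$ is a partial sum of the $p_i$'s and so lies in $FS(\{p_i\}_{i=1}^{N_k})\subset N(x,V)$; thus any $z\in W_k$ satisfies $T^lz\in W\subset U$ and $T^lx\in V$ for every $l\in FS(\{q_j\}_{j=1}^k)$, giving $d(T^lx,T^lz)>\delta$. Hence $z\in B_k\cap W$, so $B_k$ is dense in $U$.

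Because $\bar U$ is a compact metric space and each $B_k$ is open in $X$ and dense in $U$, while $U$ is dense in $\bar U$, the sets $B_k\cap\bar U$ are open and dense in $\bar U$. The Baire category theorem yields a point $z\in\bigcap_{k\ge1}B_k\subset U$; for this $z$ the set $\{n:d(T^nx,T^nz)>\delta\}$ contains a finite IP-set of length $k$ for every $k$, so it belongs to $\F_{fip}$. Since $x\in U$ as well, this proves that $(X,T)$ is strongly $\F_{fip}$-sensitive with constant $\delta$.

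The main obstacle is the density step: each round of the Lemma \ref{s-times} iteration must use integers from $N(x,V)$, and this is not automatic because sums of elements of $N(x,V)$ need not lie in $N(x,V)$. The resolution is to begin each iteration with a single sufficiently long finite IP-set drawn from $N(x,V)$ and partition its generators into disjoint chunks; every element of $FS(\{q_j\}_{j=1}^k)$ is then a partial sum of the $p_i$'s and automatically remains in $N(x,V)$. This is precisely why the hypothesis $N(x,V)\in\F_{fip}$ is the right one: forcing a single infinite iteration (so as to avoid Baire) would instead require an infinite IP-set in $N(x,V)$, i.e.\ $\F_{ip}$, which is the stronger assumption exploited in Proposition \ref{925-1}.
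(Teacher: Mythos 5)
Your proof is correct and follows essentially the same route as the paper's: a distal pair in $R_\pi\subset\RP^{[\infty]}$, transport by minimality, Lemma~\ref{SY33} to get $N(x,V)\in\F_{fip}$, and then the chunked application of Lemma~\ref{s-times} to a single long finite IP-set from $N(x,V)$ so that all partial sums of the selected $q_j$'s stay in $N(x,V)$. The only (cosmetic) difference is the final diagonalization over $k$: you take open dense sets $B_k$ and invoke Baire category on $\overline{U}$, whereas the paper nests the positive-measure sets $A_k$ (taken inside a closed neighborhood) and extracts a point of $\bigcap_k A_k$ by compactness.
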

\begin{proof} Since $\pi$ is not proximal, there are $(x_1,x_2)\in R_\pi$ which is a distal pair. It follows that
$(x_1,x_2)\in \RP^{[\infty]}=\overset{\infty}{\underset{ d=1}{\cap}}\RP^{[d]}$ and $d(T^nx_1,T^nx_2)\geq l$ for
any $n\in \mathbb{N}$. Let $U,V$ be closed neighborhoods of $x_1,x_2$ with $diam(U),diam(V)<\frac{l}{4}$ respectively.
Then $d(U,V)>\frac{l}{2}$ and we let $\delta=\frac{l}{2}$.
By Lemma \ref{SY33}, $N(x_1,V)\in \mathcal{F}_{fip}$. We are going to show that there is $z\in U$ such that
$d(T^lx_1, T^lz)>\delta$ for all $l\in F\in \F_{fip}$ with $F\subset N(x_1,V)$.

For $k=1$. Using the same argument in the of Proposition~ \ref{925-1}, we get ${n_1^1}\in \N$ such that for any
finite IP set of length ${n_1^1}$ with $FS\{p_i^1\}_{i=1}^{n_1^1}\subset N(x_1,V)$,
 there is $q_1^1\in FS\{p_i^1\}_{i=1}^{n_1^1}$ such that $\mu(U\cap T^{-q_1^1}U)\ge \frac{1}{2}a^2$. Set $U^1=U\cap T^{-q_1^1}U.$

For $k=2$.
Using the same argument in the of Proposition~ \ref{925-1} (with respect to $U^1$),
we get ${n_2^2}\in \N$ such that for any
finite IP set of length ${n_2^2}$ with $FS\{p_i^2\}_{i=1}^{n_2^2}\subset N(x_1,V)$,
there are
$q_1^2, q_2^2, q_1^2+ q_2^2 \in FS\{p_i^2\}_{i=1}^{n_2^2}$
such that if we set $U^2=U^1\cap T^{-q_1^2}U^1\cap T^{-q_2^2}U^1\cap T^{-q_1^2-q_2^2}U^1$ then  $\mu(U^2)>0$. So we have
$$\mu(U\cap T^{-q_1^1}\cap  T^{-q_1^2}U\cap T^{-q_2^2}U\cap T^{-q_1^2-q_2^2}U)>0.$$

Inductively,  for any $k\in \N$ we obtain  $n_1^j,\ldots,n_j^j$, $U^1,\ldots, U^j$ and $q_1^j,\ldots, q_j^j$
for $1\le j\le k$ such that

\noindent $\bullet$ for $0\le m\le j-1$, $q^{j}_{m+1}\in FS(\{p_i^j\}_{i=n_m^j+1}^{n^{j}_{m+1}})\subset N(x_1,V)$ (set $n_0^j=0$).

\noindent $\bullet$
$U^{j+1}=U^j\cap \cap_{l\in FS\{q_i^j\}_{i=1}^j}T^{-l}U^j$ for $0\le j\le k-1$ satisfy that $\mu(U^{j+1})>0$ (set $U^0=U$). So we have
$\mu(A_k)>0$, where
$$A_k=U \bigcap \bigcap_{j=1}^k \bigcap_{l\in FS\{q_i^j\}_{i=1}^j}T^{-l}U.$$

Set $F=\cup _{k=1}^\infty  FS\{q_i^k\}_{i=1}^k$. Then $F\subset N(x_1,V)$ and $F\in \F_{fip}$. Take $z\in \cap_{k=1}^\infty A_k$,
then $T^lz\in U$ for all $l\in F$. This implies that $d(T^lx_1, T^lz)>\delta$ for all $l\in F\in \F_{fip}$.

For $u \in X$ there is a sequence $\{n_i\}$ such that $T^{n_i}x_1\rightarrow u$ and $T^{n_i}x_2\rightarrow v.$
Then $(u,v)\in \RP^{[\infty]}$ and $(u,v)$ is a distal pair with $d(T^nu,T^nv)\geq l$. Let $W, W'$ be closed
neighborhoods of $u$ and $v$ respectively with $diam(W), diam(W')<\frac{l}{4}$.
By the proof above, we know that there is $w\in W$ such that
$d(T^lu, T^lw)>\delta$ for all $l\in F$, where $F\in \F_{fip}$ with $F\subset N(u, W')\in \F_{fip}$.
So we have proved that $(X,T)$ is strongly $\F_{fip}$-sensitive.
\end{proof}

The following lemma is well known.
\begin{lem}\label{finiteip-2} Let $F$ be a finite IP-set of length $n$ and $F=F_1\cup F_2$. Then there is $i\in\N$ such that
$F_i$ is a finite IP-set of length $l(n)$ with $l(n)\lra \infty$ when $n\lra \infty$. This also implies that
$\F_{fip}$ has the Ramsey property.
\end{lem}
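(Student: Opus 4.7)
The plan is to derive this from (the infinite) Hindman finite sums theorem via a compactness/extraction argument; the statement itself is a finite version of Hindman (often called the Folkman--Rado--Sanders theorem for $FS$-sets). Concretely, I would reformulate the target as: for each $k\in\N$ there exists $n=n(k)$ such that for any choice of positive integers $p_1,\dots,p_n$ and any partition $FS(\{p_i\}_{i=1}^{n})=F_1\cup F_2$, one of $F_1,F_2$ contains a sub-IP-set $FS(\{q_j\}_{j=1}^{k})$ of length $k$. Setting $l(n)$ to be the largest such $k$ uniform in the generators, the lemma is exactly $l(n)\to\infty$.

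First I would carry out the reduction. A finite IP-set of length $n$ is the image of the map $A\mapsto \sum_{i\in A}p_i$ on the nonempty subsets of $\{1,\dots,n\}$, and a sub-IP-set of length $k$ corresponds to choosing $k$ pairwise disjoint nonempty subsets $A_1,\dots,A_k\subseteq\{1,\dots,n\}$ whose finite sums $r_j=\sum_{i\in A_j}p_i$ give an IP-set $FS(\{r_j\}_{j=1}^{k})$. After thinning the $p_i$ if necessary, one can assume all $2^n-1$ partial sums are distinct, so the partition of $FS(\{p_i\}_{i=1}^{n})$ pulls back to a $2$-coloring of the nonempty subsets of $\{1,\dots,n\}$; one seeks $k$ pairwise disjoint nonempty subsets whose nonempty unions form a monochromatic class.

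Next I would prove $l(n)\to\infty$ by contradiction via compactness. Suppose not: there is $k$ and a sequence $n_m\to\infty$, generators $\{p_i^{(m)}\}_{i=1}^{n_m}$, and $2$-colorings $c^{(m)}$ of $FS(\{p_i^{(m)}\}_{i=1}^{n_m})$ with no monochromatic sub-IP-set of length $k$. By extracting diagonally (choosing the $p_i^{(m)}$ to grow so rapidly that the arising finite sums form a single infinite increasing sequence $\{p_i\}_{i=1}^{\infty}$, and using compactness of $\{1,2\}^{\N}$ in the product topology), I would obtain a $2$-coloring $c$ of an infinite IP-set $FS(\{p_i\}_{i=1}^{\infty})$ that still has no monochromatic sub-IP-set of length $k$. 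This contradicts Hindman's theorem, which produces an infinite (hence length $\geq k$) monochromatic sub-IP-set. Finally, for the Ramsey conclusion: if $F\in\F_{fip}$ and $F=F_1\cup F_2$, then for every $k$ pick $n$ with $l(n)\ge k$; since $F$ contains a finite IP-set of length $n$, one of $F_1,F_2$ contains a sub-IP-set of length $k$. By pigeonhole the same index $i\in\{1,2\}$ works for infinitely many (hence all) $k$, so $F_i\in\F_{fip}$.

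The main obstacle is the diagonal extraction in the compactness step: one has to arrange that the generators $p_i^{(m)}$ stabilize (after renaming, e.g.\ by replacing them with a subsequence of sums that are generic in the sense of distinct subset sums) while the colorings converge pointwise on every finite-sum expression. An expositionally cleaner alternative, which avoids this bookkeeping, is simply to quote Folkman's theorem from the literature (see, e.g., Graham--Rothschild--Spencer), since the lemma is stated as ``well known.''
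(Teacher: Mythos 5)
The paper offers no proof of this lemma at all --- it is introduced with ``The following lemma is well known'' and used as a black box --- so any comparison is between your argument and silence. Your overall strategy (reformulate as a finite partition statement, deduce it from the infinite Hindman/finite-unions theorem by compactness, or simply cite Folkman's theorem) is the standard and correct way to justify the claim, and your final paragraph deducing the Ramsey property of $\F_{fip}$ from $l(n)\to\infty$ is right: one colour class works for infinitely many $k$, and a finite IP-set of length $k$ contains finite IP-sets of every smaller length.

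One step of your reduction is actually wrong as stated, though easily repaired. You say ``after thinning the $p_i$ if necessary, one can assume all $2^n-1$ partial sums are distinct.'' This is impossible in general: if $p_1=\dots=p_n=1$ then $FS(\{p_i\}_{i=1}^n)=\{1,\dots,n\}$ and no subfamily of two or more generators has distinct subset sums, yet the lemma must still hold for this $F$ (this is exactly the Folkman--Rado--Sanders case). The fix is that no thinning is needed: define $\tilde c(A)=c\bigl(\sum_{i\in A}p_i\bigr)$ on nonempty subsets $A\subseteq\{1,\dots,n\}$, which is well defined whether or not coincidences occur. The finite unions theorem (the abstract form of finite Hindman, obtained from the infinite version by a clean K\"onig's-lemma compactness argument on colourings of subsets of $\{1,\dots,n\}$ --- no generator bookkeeping required) yields pairwise disjoint $A_1,\dots,A_k$ all of whose nonempty unions are $\tilde c$-monochromatic; setting $q_j=\sum_{i\in A_j}p_i$ pushes this forward to a $c$-monochromatic $FS(\{q_j\}_{j=1}^k)$. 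This also dissolves the ``main obstacle'' you flag, since the compactness is performed on the fixed ground set $\{1,\dots,n\}$ rather than on varying generators. With that adjustment (or by citing Folkman's theorem directly, as you suggest), your proof is complete.
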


To end the proof we need another proposition.
\begin{prop}\label{925-4} Let $(X,T)$ be a minimal block $\F_{ip}$-sensitive t.d.s. with the sensitive constant $10\delta$.
Assume that $x\in X$ and $U$ is any neighborhood of $x$.
Then there are $z\in U$ and $y\in X$ such that $(y,z)\in \RP^{[\infty]}$ with $d(z,y)\geq\delta$.
\end{prop}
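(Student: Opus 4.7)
The plan is to apply block $\F_{ip}$-sensitivity (with constant $10\delta$) to produce, for each $l\in\N$, a point $y_l$ near $x$ whose orbit diverges from that of $x$ by more than $10\delta$ along a long finite IP-set; then to use iterated Ramsey refinement (Lemma~\ref{finiteip-2}) to make the two orbits cluster at centers $v_l,w_l\in X$; and finally to pass to a compactness limit and invoke the characterization of $\RP^{[\infty]}$ via $\F_{fip}$-returns (Lemma~\ref{SY33}). The buffer factor of $10$ in the sensitivity constant, combined with a small choice of auxiliary neighborhood of $x$, will enable a case analysis at the end.

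Concretely, I first fix an open $U_0$ with $x\in U_0\subset\overline{U_0}\subset U$ and $\diam(U_0)<\delta$, and for each $l$ apply block $\F_{ip}$-sensitivity at $x$ and $U_0$ to obtain $y_l\in U_0$ together with a finite IP-set $F_l\subset\{n\in\Z_+:d(T^nx,T^ny_l)>10\delta\}$ whose length is chosen large enough to survive the refinements. Next, cover $X$ by finitely many balls of radius $1/l$; iterating Lemma~\ref{finiteip-2} against the colorings $n\mapsto T^nx$ and $n\mapsto T^ny_l$, for $|F_l|$ sufficiently large one extracts a finite IP-subset $G_l\subset F_l$ of length $l$ and points $v_l,w_l\in X$ with $T^nx\in B(v_l,1/l)$ and $T^ny_l\in B(w_l,1/l)$ for every $n\in G_l$; in particular $d(v_l,w_l)\geq 10\delta-2/l$. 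Passing to a subsequence I assume $y_l\to z\in\overline{U_0}\subset U$ (whence $d(x,z)<\delta$), $v_l\to v$, and $w_l\to w$, so $d(v,w)\geq 10\delta$.

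For any neighborhood $V$ of $v$ and all large $l$ we have $B(v_l,1/l)\subset V$, so $N(x,V)\supset G_l$ contains a finite IP-set of length $l$; varying $l$, $N(x,V)\in\F_{fip}$, and Lemma~\ref{SY33} gives $(x,v)\in\RP^{[\infty]}$. If $d(x,v)\geq\delta$, taking $z=x\in U$ and $y=v$ finishes the proof. If $d(x,v)<\delta$ then $d(x,w)\geq d(v,w)-d(x,v)>9\delta$, and since $\RP^{[\infty]}$ is an equivalence relation on the minimal system $X$, it suffices to show $(v,w)\in\RP^{[\infty]}$ in order to conclude $(x,w)\in\RP^{[\infty]}$.

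The main obstacle is exactly this last step. The single-scale refinement above controls $N(y_l,V)$ only for neighborhoods $V\supset B(w_l,1/l)$, whereas Lemma~\ref{SY33} demands $N(y_l,V)\in\F_{fip}$ for \emph{every} neighborhood $V$ of $w_l$. My intended remedy is to iterate the Ramsey step at the nested scales $1,1/2,\ldots,1/l$, producing a tower $F_l\supset F_l^{(1)}\supset\cdots\supset F_l^{(l)}$ whose terminal length still tends to infinity while simultaneously controlling the orbit of $y_l$ at all these scales; combined with the closedness of $\RP^{[d]}$ for each fixed $d$, this should force $(y_l,w_l)\in\RP^{[d]}$ for every $d$ and all large $l$, so that the convergence $y_l\to z$, $w_l\to w$ promotes the limit pair into $\RP^{[\infty]}$, completing the case.
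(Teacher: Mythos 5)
Your first case is sound: because the source point $x$ is \emph{fixed} for all $l$, every neighborhood $V$ of $v$ eventually contains $B(v_l,1/l)$, so $N(x,V)\supset G_l$ contains arbitrarily long finite IP-sets, and Lemma~\ref{SY33} gives $(x,v)\in\RP^{[\infty]}$; if $d(x,v)\geq\delta$ you are done. The genuine gap is your second case, and the proposed remedy does not close it. To get $(y_l,w_l)\in\RP^{[d]}$ from the IP-characterization (\cite[Theorem 3.2]{SY2012}, the finite-$d$ analogue of Lemma~\ref{SY33}) you need a finite IP-set of length $d+1$ in $N(y_l,V)$ for \emph{every} neighborhood $V$ of $w_l$, i.e. control at all scales around $w_l$ for the \emph{same} point $y_l$. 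Your multi-scale Ramsey tower controls only the finitely many scales $1,1/2,\dots,1/l$; for neighborhoods of radius below $1/l$ you have no information, so $(y_l,w_l)\in\RP^{[d]}$ does not follow for any $d$, and the closedness of $\RP^{[d]}$ then has nothing to act on. Nor can you transfer the return-time information to the limit $z=\lim y_l$: the sets $G_l$ change with $l$, and nothing forces $T^nz$ to lie near $w$ for $n\in G_l$.

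What is missing is exactly the two-step localization that the paper performs. First, it builds a \emph{single} source point $z$ by a nested-neighborhood construction ($U\supset U_2\supset U_3\supset\cdots$ with $T^n\overline{U_{k+1}}\cap U_0=\emptyset$ for $n\in F_k''$, and $z\in\bigcap_k\overline{U_k}$), so that one fixed point satisfies $N(z,U_0^c)\in\F_{fip}$; here the measure-theoretic return argument of Proposition~\ref{925-1} (via Gillis' lemma) keeps one orbit inside $U\subset B(x,\delta)$ and thereby pushes the companion orbit out of $U_0=B(x,4\delta)$ --- this is also what ultimately guarantees the separation $d(z,y)\geq 2\delta$, which your construction does not secure. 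Second, it localizes the target by repeatedly covering $U_0^c$ with finitely many closed balls of shrinking radius and using the Ramsey property of $\F_{fip}$ to select a nested sequence of balls, producing a single $y$ with $N(z,V)\in\F_{fip}$ for \emph{every} neighborhood $V$ of $y$; only then does Lemma~\ref{SY33} apply. Your argument has neither the fixed source carrying all the finite IP-sets nor the all-scales localization of the target. (Your first-case computation does show, with less machinery, that some pair $(x,v)\in\RP^{[\infty]}$ exists, but without a lower bound on $d(x,v)$ this is weaker than the proposition.)
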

\begin{proof} Since $(X,T)$ is block $\F_{ip}$-sensitive, there is $\delta>0$ such that
for any $x_0\in X$, any neighborhood $V$ of $x_0$ and any $n\in \N$ there is $y_0,z_0\in V$
such that $\{m\in\N: d(T^my_0,T^mz_0)>10\delta\}$ contains a finite IP-set of length $n$.

\medskip

Let $U_0=B(x,4\delta)$ and $U_1=B(x,\delta)$. Without loss of generality,
we assume $U\subseteq U_1$. Then for  $n_1\in \N$  large enough there are $x_1^1,x_2^1\in U$
such that $F_1=\{n\in\N: d(T^nx_1^1,T^nx_2^1)>10\delta\}$ contains a finite IP-set of length $n_1$.
By the method of Proposition \ref{925-1}, there is $z_1\in U$ satisfying $T^nz_1\in U$ for
$n\in F_1'\subseteq F_1$, where $F_1'$ is a finite IP-set of length $k(n_1)$. Then
$d(T^nx_1^1,T^nz_1)>5\delta$ or $d(T^nx_2^1,T^nz_1)>5\delta$ for $n\in F_1'$.
Without loss of generality, we assume that $d(T^nx_1^1,T^nz_1)>5\delta$  for $n\in F_1''\subseteq F_1'$,
where $F_1'$ is a finite IP-set of length $l(k(n_1))$ (Lemma \ref{finiteip-2}). Then $T^nx_1^1\not \in U_0$ for $n\in F_1''$.
Let $U_2\subset U$ an open neighborhood of $x_1^1$ with diameter small enough such that $T^n\overline{U_2}\cap U_0=\emptyset$ for $n\in F_1''$.

\medskip

Then for  $n_1 \ll n_2\in \N$ large enough there are $x_1^2,x_2^2\in U_2$
such that $F_2=\{n\in\N: d(T^nx_1^2,T^nx_2^2)>10\delta\}$ contains a finite IP-set of length $n_2$.
By the method of Proposition~\ref{925-1} again,
there is $z_2\in U_2$ satisfying $T^nz_2\in U_2$ for $n\in F_2'\subseteq F_2$, where $F_2'$
is a finite IP-set of length $k(n_2)$. Then $d(T^nx_1^2,T^nz_2)>5\delta$ or
$d(T^nx_2^2,T^nz_2)>5\delta$ for $n\in F_2'$. Without loss of generality, we assume
$d(T^nx_1^2,T^nz_2)>5\delta$  for $n\in F_2''\subseteq F_2'$, where $F_2'$ is a
finite IP-set of length $l(k(n_2))$. Then $T^nx_1^2\not \in U_0$.
Let $U_3\subset U_2$ an open neighborhood of $x_1^2$ with diameter small enough such
that $T^n\overline{U_3}\cap U_0=\emptyset$ for $n\in F_2''$.

\medskip

Continue the process, we get $F_k\supseteq F_k'\supseteq F_k'',\ z_k,\ n_k$ and $U_k$ with $diam(U_k)\ra 0$ as $k\ra \infty$.
We have

(1) $d(T^nz_k,T^nx_1^k)\geq 5\delta$ for $n\in F_k''$ with $z_k\in U_k,\ x_1^k\in U_{k+1}\subseteq U_k$;

(2) $T^n\overline{U_{k+1}}\cap U_0\neq \emptyset$ for $n\in F_k''$.

Assume that $\underset{k\rightarrow \infty}{\lim}z_k= z$,
then $\underset{k\rightarrow \infty}{\lim}x_1^k= z$. Since $z\in \underset{k=1}{\overset{\infty}{\cap}}\overline{U_k}$,
we have $T^nz \not \in U_0$ for $n\in F_k''$. Thus, $N(z,U_0^c)\in \mathcal{F}_{fip}$, for $l(k(n))\ra \infty$ as
$n\ra \infty$.

Let $W=B(x,3\delta)$. Since $U_0^c$ is compact, we can cover $U_0^c$
by finitely many closed balls $\{V_1^1,V_2^1,\cdots,V_{l_1}^1\}$ with diameter less than $1$ and
$\underset{k=1}{\overset{l_1}{\cup}}V_{k}^1\subset W^c$.  By the Ramsey property of $\mathcal{F}_{fip}$,
there is $1\le m_1\le l_1$ such that $N(z,V_{m_1}^1)\in \mathcal{F}_{fip}$. Since $V_{m_1}^1$ is compact,
we can cover $V_{m_1}^1$ by finitely many closed balls $\{V_1^2,V_2^2,\cdots,V_{l_2}^2\}$ with diameter less than $\frac{1}{2}$ and
$\underset{k=1}{\overset{l_2}{\cup}}V_{k}^2\subset W^c$.  By the Ramsey property of $\mathcal{F}_{fip}$ again,
there is $1\le m_2\le l_2$ such that $N(z,V_{m_2}^2)\in \mathcal{F}_{fip}$. Continue the process, we get $V_{m_k}^k$
such that
$$N(z,V_{m_k}^k)\in \mathcal{F}_{fip},\ \diam(V_{m_k}^k)\leq\frac{1}{k}\ \text{and}\ V_{m_k}^k\subset W^c.$$
Let $y\in \underset{k=1}{\overset{\infty}{\cap}}V_{m_k}^k$. Then for any open neighborhood $W'$ of $y$, we have $N(z,W')\in \mathcal{F}_{fip}$
since $W'$ contains $V_{m_k}^k$ for some $k\in\N$. Lemma~\ref{SY33} implies that $(y,z)\in \RP^{[\infty]}$.
Since $y\in W^c$ and $z\in \overline{U_1}$, we conclude that $d(z,y)\geq 2\delta>\delta$.
This ends the proof.
\end{proof}

\begin{proof}[Proof of Theorem~\ref{925--1}]
(1) $\Rightarrow $(2) is obvious.

(2)$\Rightarrow$ (3)
Assume that $(X,T)$ is block $\F_{ip}$-sensitive.
Fix $x\in X$.
By Proposition~\ref{925-4}
for every $n\in\N$, there exists $x_n\in B(x,\frac{1}{n})$ and $y_n\in X$
such that $d(x_n,y_n)\geq\delta$ and $(x_n,y_n)\in \RP^{[\infty]}(X)$.
Without loss of generality, assume that $y_n\to y$.
Then $d(x,y)\geq \delta$ and $(x,y)\in \RP^{[\infty]}(X)$ as $\RP^{[\infty]}(X)$ is closed.

(3) $\Rightarrow$ (4) is obvious.

(4) $\Rightarrow$ (1) Since $\phi$ is not almost one-to-one, $\phi$ is either not proximal, or
proximal and not almost one-to-one. If $\phi:X\lra X_\infty$ not proximal, then by
Proposition~ \ref{925-3} we get that $(X,T)$ is strongly $\F_{fip}$-sensitive.
If $\phi:X\lra X_{\infty}$ is proximal, not almost one-to-one, by Proposition \ref{925-1} we get $(X,T)$ is strongly $\F_{fip}$-sensitive.
\end{proof}

\subsection{Strong \texorpdfstring{$\F_{ip}$}{Fip}-sensitive}
In this subsection, we study strong $\F_{ip}$-sensitivity and give the proof of Theorem C.
Recall that  we say a t.d.s. $(X,T)$ is strongly $\F_{ip}$-sensitive if there is $\delta>0$
such that for each opene subset $U$ of $X$, there are $x,y\in U$ with
$\{n\in\Z_+:d(T^nx,T^ny)>\delta\}\in \F_{ip}$. In fact we will show a stronger form of Theorem~C.

\begin{thm}\label{thm:Fip-sensitive}
Let $(X,T)$ be a minimal system. Then the following conditions are equivalent:
\begin{enumerate}
  \item $(X,T)$ is strongly $\F_{ip}$-sensitive;
  \item there is $\delta>0$ such that for every non-empty open subset $U$ of $X$
  there exists a proximal pair $(x,y)$ with $x\in U$ and $d(x,y)>\delta$;
  \item $\pi\colon X\to X_D$ is not almost one-to-one,
  where $(X_D,T)$ is the maximal distal factor of $(X,T)$.
\end{enumerate}
\end{thm}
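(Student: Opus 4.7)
The plan is to prove $(2) \Rightarrow (1) \Rightarrow (3) \Rightarrow (2)$, using distinct tools for each implication. For $(2) \Rightarrow (1)$, fix an opene $U$ and take by hypothesis a proximal pair $(x,y)$ with $x \in U$ and $d(x,y) > \delta$. In a minimal system, for every neighborhood $V$ of $y$ the set $N(x,V)$ is central, hence an IP-set. Shrink $V$ to a small ball around $y$ and $W \subseteq U$ to a small ball around $x$ so that $d(V,W) > \delta/2$, and apply the iterated measure-theoretic construction used in Proposition \ref{925-1} (invoking Lemma \ref{s-times} repeatedly against a fully supported $\mu \in M(X,T)$) to produce $z \in W$ and an IP-subset $G \subseteq N(x,V)$ with $T^n z \in W$ for all $n \in G$. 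For such $n$, $T^n x \in V$ and $T^n z \in W$, yielding $d(T^n x, T^n z) > \delta/2$, so $(X,T)$ is strongly $\F_{ip}$-sensitive.

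For $(1) \Rightarrow (3)$, I first prove an auxiliary characterization analogous to Proposition \ref{925-4}: strong $\F_{ip}$-sensitivity implies that, for some $\delta > 0$, every $x \in X$ and every neighborhood $U$ of $x$ admit $y \in U$ and $z \in X$ with $d(y,z) > \delta$ and $N(y,V) \in \F_{ip}$ for every neighborhood $V$ of $z$. Starting from $y_1, y_2 \in U$ with $\{n : d(T^n y_1, T^n y_2) > 100 \delta\}$ containing an infinite IP-set, a single application of Lemma \ref{s-times} plus the Ramsey (Hindman) property of $\F_{ip}$ yields a point $y \in U$ with $N(y, U_0^c) \in \F_{ip}$ for a slightly larger ball $U_0$ around $x$; Ramsey-style dyadic subdivision of $U_0^c$ then extracts $z$ and a chain of shrinking balls around it, each with IP-set preimage under $y$. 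Now suppose $\pi \colon X \to X_D$ is almost one-to-one; pick $x_0$ with $\pi^{-1}\pi(x_0) = \{x_0\}$ and apply the auxiliary in a tiny neighborhood $U$ of $x_0$. If $\pi(y) \neq \pi(z)$, pick $V$ around $z$ with $\pi(V)$ disjoint from a ball $B(\pi(y), \eta)$; then $N(y,V) \subseteq \Z_+ \setminus N(\pi(y), B(\pi(y),\eta))$, and since $X_D$ is distal, Furstenberg's $IP^*$-recurrence theorem gives $N(\pi(y), B(\pi(y),\eta)) \in \F_{ip}^*$, so its complement is not in $\F_{ip}$, contradicting $N(y,V) \in \F_{ip}$. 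Hence $\pi(y) = \pi(z)$; but the upper semicontinuity of $\pi^{-1}$ at the singleton-fiber point $x_0$ forces $\pi^{-1}\pi(y)$ into an arbitrarily small neighborhood of $x_0$ when $U$ is small, making $d(y,z) < \delta$, another contradiction.

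For $(3) \Rightarrow (2)$, I split on whether $\pi \colon X \to X_D$ is proximal. If it is proximal but not almost one-to-one, Lemma \ref{ge0} (applied to the invertible distal $X_D$) gives $l = \inf_y \diam \pi^{-1}(y) > 0$; every $x \in X$ admits $y \in \pi^{-1}\pi(x)$ with $d(x,y) \geq l/2$, and $(x,y) \in R_\pi \subseteq P(X)$ since $\pi$ is proximal. If $\pi$ is not proximal, take a distal pair $(x_*, y_*) \in R_\pi = S_D$. By the Sha/Ellis theorem (a closed proximal relation in a minimal system is an equivalence relation), if $P(X)$ were closed it would coincide with $S_D$ and contain $(x_*, y_*)$, contradicting distality; so $P(X)$ is not closed, giving proximal pairs $(x_i, y_i) \to (x_*, y_*)$. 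With $\beta = \inf_n d(T^n x_*, T^n y_*) > 0$, for any opene $U$ minimality gives $n$ with $T^n x_* \in U$, and for large $i$ the proximal pair $(T^n x_i, T^n y_i)$ has $T^n x_i \in U$ and $d(T^n x_i, T^n y_i) > \beta/2$. Taking $\delta = \min(l, \beta)/4$ verifies (2). The principal obstacle throughout is establishing the auxiliary characterization used in $(1) \Rightarrow (3)$: careful tracking of IP-structure through the measure-theoretic iteration and the Ramsey extraction is needed to preserve a single infinite IP-set, rather than only arbitrarily long finite IP-sets as in the proof of Proposition \ref{925-4}.
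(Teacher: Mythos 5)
Your proof is correct, but it travels a genuinely different cycle from the paper's. The paper proves (1)$\Rightarrow$(2)$\Rightarrow$(3)$\Rightarrow$(1), funnelling everything through Proposition~\ref{929-1}, which recasts strong $\F_{ip}$-sensitivity in terms of strong proximality and outsources the hardest extraction step --- producing a single point $z$ with $N(y,V)\in\F_{ip}$ for \emph{every} neighborhood $V$ of $z$ --- to the quoted results of Li (Lemma~\ref{lem:strongly-proximal} and Proposition~\ref{prop:IP-set-stronlgy-proximal}); the implication (1)$\Rightarrow$(3) then follows painlessly by taking limits of proximal pairs, landing in $S_D$ and contradicting triviality of a fiber. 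You instead prove (1)$\Rightarrow$(3) head-on: you rebuild the extraction by hand via the Ramsey/Hindman dyadic subdivision of $U_0^c$ (exactly the device the paper uses for $\F_{fip}$ in Proposition~\ref{925-4}), and then defeat the almost one-to-one hypothesis with Furstenberg's $IP^*$-recurrence theorem in the distal factor together with upper semicontinuity of fibers at a singleton fiber. This buys independence from the external strong-proximality machinery at the price of re-proving a version of it, and it needs the $IP^*$ theorem, which the paper's detour through condition (2) avoids entirely. Your (2)$\Rightarrow$(1) is essentially the sufficiency half of Proposition~\ref{929-1} (a proximal pair whose second coordinate is minimal has central, hence IP, entering times, after which the iterated Gillis argument of Proposition~\ref{925-1} applies), and your (3)$\Rightarrow$(2) is a truncation of the paper's (3)$\Rightarrow$(1), stopping once the proximal pair is manufactured inside $U$.

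Two small repairs are needed. In (1)$\Rightarrow$(3), a single application of Lemma~\ref{s-times} does not place an infinite IP-set inside the return-time set of $y$ to $U$; you need the full inductive scheme of Proposition~\ref{925-1} with a limit point of the nested sets --- you flag this yourself as the principal obstacle, so just make sure the final write-up carries out the induction rather than one step of it. In the non-proximal case of (3)$\Rightarrow$(2), the distal pair you first choose in $R_\pi$ need not itself be a limit of proximal pairs; non-closedness of $P(X)$ only supplies \emph{some} distal pair that is such a limit, so take $(x_*,y_*)$ to be that witnessing pair --- the remainder of your argument uses nothing else about it.
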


We say that $x$ is \emph{strongly proximal} to $y$ if $(y,y)\in \omega((x,y),T\times T)$, where $\omega(x,y)$
is the $\omega$-limit set of $(x,y)$.
Note that if $(x,y)$ is proximal and $y$ is a minimal point, then $x$ is strongly proximal to $y$.
We need two results from \cite{L12}.

\begin{lem}[\mbox{\cite[Lemma 4.8]{L12}}] \label{lem:strongly-proximal}
Let $(X,T)$ be a dynamical system and $x,y\in X$.
Then $x$ is strongly proximal to $y$ if and only if
for every neighborhood $U$ of $y$, $N(x,U)\cap N(y,U)$ contains an IP-set.
\end{lem}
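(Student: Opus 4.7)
The plan is to establish the equivalence by first disposing of the easy direction and then building the IP-set inductively using a standard nested-neighborhood argument.

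For the backward direction, suppose that for every neighborhood $U$ of $y$ the set $N(x,U)\cap N(y,U)$ contains an IP-set. Since any IP-set $FS(\{p_i\}_{i=1}^\infty)$ is unbounded (the partial sums $p_1+\cdots+p_k$ tend to infinity), for each $\eps>0$ I can find $n$ arbitrarily large with $T^n x, T^n y\in B(y,\eps)$. Passing to a subsequence along which $T^{n_k}(x,y)$ converges, the limit must be $(y,y)$, so $(y,y)\in\omega((x,y),T\times T)$ and $x$ is strongly proximal to $y$.

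For the forward direction, suppose $(y,y)\in\omega((x,y),T\times T)$ and fix a neighborhood $U$ of $y$. I will construct inductively positive integers $p_1<p_2<\cdots$ and a decreasing sequence of open neighborhoods $U=U_0\supset U_1\supset U_2\supset\cdots$ of $y$ such that $FS(\{p_i\}_{i=1}^\infty)\subset N(x,U)\cap N(y,U)$. At step $k\geq 1$, having chosen $p_1,\ldots,p_{k-1}$ and $U_0,\ldots,U_{k-1}$ with the inductive property that $\sum_{i\in S} p_i\in N(x,U)\cap N(y,U)$ for every nonempty $S\subset\{1,\ldots,k-1\}$, I first pick $U_k\subset U_{k-1}$ small enough that $T^{\sum_{i\in S}p_i}(U_k)\subset U$ for every $S\subset\{1,\ldots,k-1\}$; this is possible by continuity of each of the finitely many maps $T^{\sum S}$ at $y$, using that $T^{\sum S}y\in U$ (true for $S=\emptyset$ trivially and for $S\neq\emptyset$ by the inductive hypothesis). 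Then I use the assumption $(y,y)\in\omega((x,y),T\times T)$ to choose $p_k>p_{k-1}$ with $(T^{p_k}x,T^{p_k}y)\in U_k\times U_k$.

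For any nonempty finite $F=\{i_1<\cdots<i_m\}\subset\N$, writing $F=S\cup\{i_m\}$ with $S\subset\{1,\ldots,i_m-1\}$, one has $T^{\sum F}x=T^{\sum S}(T^{p_{i_m}}x)\in T^{\sum S}(U_{i_m})\subset U$, and likewise for $y$, so $\sum F\in N(x,U)\cap N(y,U)$. Hence $FS(\{p_i\})\subset N(x,U)\cap N(y,U)$ is the desired IP-set.

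The only subtle step is arranging the shrinking of $U_k$ so that every partial sum from previous stages carries $U_k$ into $U$; this is the reason the indexing is set up so that we only apply $T^{\sum S}$ with $S$ drawn from earlier indices, letting continuity at $y$ handle the finitely many constraints simultaneously.
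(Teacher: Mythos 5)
The paper does not actually prove this lemma --- it is quoted from \cite{L12} --- and your argument is correct and essentially reproduces the standard proof given there: the backward direction from the unboundedness of IP-sets, and the forward direction by the Furstenberg-style nested-neighborhood induction, with the decomposition $F=S\cup\{\max F\}$ doing exactly the right bookkeeping. The only cosmetic point is that applying continuity of the maps $T^{\sum_{i\in S}p_i}$ to get $T^{\sum_{i\in S}p_i}(U_k)\subset U$ tacitly uses that $U$ is open, which you may assume without loss of generality since $N(x,\cdot)$ and $N(y,\cdot)$ are monotone in $U$.
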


\begin{prop}[\mbox{\cite[Proposition 5.9]{L12}}] \label{prop:IP-set-stronlgy-proximal}
Let $(X,T)$ be a dynamical system, $x\in X$ and $Y\subset X$ be a closed subset of $X$.
If $N(x,Y)$ contains an IP set, then there exists $y\in Y$
such that $x$ is strongly proximal to $y$.
\end{prop}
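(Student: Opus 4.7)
The plan is to use the Galvin--Glazer--Hindman theorem on idempotent ultrafilters in $\beta\Z_+$ as the principal tool. By hypothesis there is an IP-set $FS(\{p_i\}_{i=1}^\infty) \subseteq N(x,Y)$, and by Hindman's theorem (see \cite{F1981, VB}) this IP-set belongs to some idempotent ultrafilter $p$ on $\Z_+$, meaning $p+p=p$ in the Stone--\v{C}ech semigroup. I then define
$$y := p\text{-}\lim_{n} T^{n} x,$$
which exists in $X$ by compactness. Unwinding the definition, for every neighborhood $U$ of $y$ the set $\{n \in \Z_+ : T^n x \in U\}$ lies in $p$. Since $N(x,Y) \in p$ and $Y$ is closed, intersecting over a neighborhood base of $y$ forces $y \in Y$.

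Next I would exploit idempotency to show $p\text{-}\lim_n T^n y = y$. By continuity of iterates and standard commutation properties of $p$-limits with continuous maps, $T^n y = p\text{-}\lim_m T^{n+m} x$, so that
$$p\text{-}\lim_n T^n y \;=\; p\text{-}\lim_n p\text{-}\lim_m T^{n+m} x \;=\; (p+p)\text{-}\lim_k T^{k} x \;=\; p\text{-}\lim_k T^{k} x \;=\; y.$$
Combining the two identities $p\text{-}\lim T^n x = y$ and $p\text{-}\lim T^n y = y$ in the product system $(X \times X, T\times T)$ gives $p\text{-}\lim_n T^n(x,y) = (y,y)$. In particular, for every neighborhood $U$ of $y$ the intersection $N(x,U) \cap N(y,U)$ belongs to $p$, and since every element of an idempotent ultrafilter contains an IP-set (Hindman's theorem), this intersection contains an IP-set. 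An appeal to Lemma~\ref{lem:strongly-proximal} then concludes that $x$ is strongly proximal to $y$.

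The main obstacle is purely expository: the Galvin--Glazer theorem and the $p$-limit calculus have to be invoked as a black box, which may be unfamiliar to some readers even though both are standard in the dynamical combinatorics literature. A fully elementary alternative would recursively build a decreasing sequence of open neighborhoods $U_1 \supset U_2 \supset \cdots$ shrinking to a candidate point $y$ together with a simultaneously decreasing chain of sub-IP-sets $F_k \subseteq N(x,U_k) \cap N(y,U_k)$, using repeated applications of Hindman's theorem to refine to a monochromatic sub-IP-set at each stage and a diagonal compactness argument to pin down $y \in Y$. This avoids $\beta\Z_+$ entirely but is noticeably messier than the ultrafilter proof sketched above, so I would present the idempotent-ultrafilter version.
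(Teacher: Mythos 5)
Your argument is correct: the existence of an idempotent $p\in\beta\Z_+$ containing the given IP-set, the definition $y=p\text{-}\lim_n T^nx$ (which lies in $Y$ because $N(x,Y)\in p$ and $Y$ is closed), the idempotency computation giving $p\text{-}\lim_n T^n(x,y)=(y,y)$, and the fact that every member of an idempotent ultrafilter contains an IP-set together yield exactly the hypothesis of Lemma~\ref{lem:strongly-proximal}. The paper does not prove this proposition itself but quotes it from \cite{L12}, and the idempotent-ultrafilter calculus you use is precisely the machinery of that reference, so this is essentially the intended proof rather than a genuinely different route.
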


Now we show a proposition.

\begin{prop}\label{929-1}
Let $(X,T)$ be a minimal system.
Then $(X,T)$ is strongly $\F_{ip}$-sensitive
if and only if there is $\delta>0$ such that every non-empty open subset $U$ of $X$,
there is $x\in U$ and $y\in X$ with $d(x,y)>\delta$ and $x$ is strongly proximal to $y$.
\end{prop}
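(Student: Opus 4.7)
The plan is to prove both implications by direct construction, using the two lemmas of Li cited above together with an invariant-measure argument patterned on Proposition~\ref{925-1}.

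For the reverse implication, I would fix $\delta>0$ as in the hypothesis. Given an opene $U$, choose $x\in U$ and $y\in X$ with $d(x,y)>\delta$ and $x$ strongly proximal to $y$. Pick $\varepsilon$ so small that $\overline{V}:=\overline{B(x,\varepsilon)}\subseteq U$ and $d(\overline{V},B(y,\varepsilon))>\delta/2$, and set $W:=B(y,\varepsilon)$. Lemma~\ref{lem:strongly-proximal} supplies an IP-set $F\subseteq N(x,W)$. I would then transcribe the Gillis-type induction from the proof of Proposition~\ref{925-1} verbatim: since $(X,T)$ is minimal, every $\mu\in M(X,T)$ is fully supported, so $\mu(V)>0$, and iterating Lemma~\ref{s-times} inside $F$ extracts a sub-IP-set $F'\subseteq F$ and a point $z\in\overline{V}$ with $T^nz\in\overline{V}$ for every $n\in F'$. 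For such $n$ we have $T^nx\in W$ and $T^nz\in\overline{V}$, hence $d(T^nx,T^nz)>\delta/2$, witnessing strong $\F_{ip}$-sensitivity with constant $\delta/2$.

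For the forward implication, I would start from a strong $\F_{ip}$-sensitivity constant $10\delta>0$, and given an opene $U$, shrink to an opene $V\subseteq U$ of diameter less than $\delta$. Strong sensitivity applied to $V$ yields $u,v\in V$ with $F:=\{n:d(T^nu,T^nv)>10\delta\}\in\F_{ip}$. For each $n\in F$ the triangle inequality forces either $d(T^nu,u)>5\delta$ or $d(T^nv,u)>5\delta$, so the Ramsey property of $\F_{ip}$ produces an IP-subset $F_0\subseteq F$ on which one of these inequalities holds uniformly. Together with $\diam(V)<\delta$, this gives a point $x\in V\subseteq U$ (either $u$ or $v$) with $N(x,Y)\supseteq F_0\in\F_{ip}$, where $Y:=X\setminus B(x,4\delta)$ is closed. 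Proposition~\ref{prop:IP-set-stronlgy-proximal} then furnishes $y\in Y$ to which $x$ is strongly proximal, and $d(x,y)\geq 4\delta>\delta$.

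I expect the measure-theoretic induction in the reverse direction to be the main obstacle: the IP entries $n$ must simultaneously send $x$ into $W$ and carry $z$ back into $\overline{V}$, so the Gillis extraction has to be performed inside the pre-chosen IP-set $F=N(x,W)$ rather than in all of $\Z_+$. This is exactly the adjustment already carried out in Proposition~\ref{925-1}, so no new ingredient is needed, only a careful port of that argument to the present setting.
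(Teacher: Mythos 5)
Your proposal is correct and follows essentially the same route as the paper: the forward direction combines the Ramsey property of $\F_{ip}$ with Proposition~\ref{prop:IP-set-stronlgy-proximal} to land an IP-set in the hitting times of a closed set far from a point of $U$, and the reverse direction feeds the IP-set from Lemma~\ref{lem:strongly-proximal} into the Gillis/invariant-measure extraction of Proposition~\ref{925-1}. The only differences are cosmetic (constants $10\delta$ versus $8\delta$, and measuring separation from $u$ itself rather than from an auxiliary center $z\in U$); your explicit care that the extracted point $z$ lies in $\overline{V}\subseteq U$ is in fact slightly tidier than the paper's wording.
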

\begin{proof}
First assume that $(X,T)$ is strongly $\F_{ip}$-sensitive with sensitive constant $8\delta>0$.
Fix a non-empty open subset $U$ of $X$. Pick $z\in U$ and let $V=U\cap B(z,\delta)$.
There are $x_1,x_2\in V$ such that $F=\{n\in\Z_+\colon d(T^nx_1,T^nx_2)>8\delta\}$ contains an IP-set.
Let $W=X\setminus B(z,2\delta)$.
By the Ramsey property of $\F_{ip}$, either $N(x_1,W)$ or $N(x_2,W)$ contains an IP-set.
By Proposition~\ref{prop:IP-set-stronlgy-proximal}
there exists $y\in W$ such that either $x_1$ or $x_2$ is strongly proximal to $y$.
It is clear that $d(x_1,y)>\delta$ and $d(x_2,y)>\delta$.

Now we show the sufficiency.  Fix a non-empty open subset $U$ of $X$.
there is $x\in U$ and $y\in X$ with $d(x,y)>\delta$ and $x$ is strongly proximal to $y$.
By Lemma~\ref{lem:strongly-proximal},
$N(x, B(y,\delta/3))$ contains an IP-set $FS(\{p_i\}_{i=1}^{\infty})$.
By the method of Proposition~\ref{925-1},
there exist $z\in B(x,\delta/3)$ and an IP subset $FS(\{q_j\}_{j=1}^{\infty})$
such that $FS(\{q_j\}_{j=1}^{\infty})\subset N(z,B(x,\delta/3))$ and $FS(\{q_j\}_{j=1}^{\infty})\subset FS(\{p_i\}_{i=1}^{\infty})$.
Then $FS(\{q_j\}_{j=1}^{\infty})\subset \{n\in\Z_+\colon d(T^nx,T^nz)>\delta/3\}$,
which implies that $(X,T)$ is strongly $\F_{ip}$-sensitive
with the sensitive constant $\delta/3$.
\end{proof}

We are in the position to give:

\begin{proof}[Proof of Theorem~\ref{thm:Fip-sensitive}]
(1)$\Rightarrow$(2) follows from the Proposition~\ref{929-1}.

(2)$\Rightarrow$(3) For every point $x\in X$, there exists a sequence $y_n$ and $z_n$
such that $\lim_{n\to \infty}y_n=x$ and $(y_n,z_n)$ is proximal and $d(y_n,z_n)>\delta$.
Without loss of generality, assume that $\lim_{n\to\infty}z_n=z$.
Then $d(x,z)\geq \delta$. Note that $(x,z)\in S_{D}$,
where $S_{D}$ is the distal relation, $X/S_D=X_D$.
Let $\pi\colon X\to X_D$. Then $\{x,z\}\in \pi^{-1}(\pi(x))$.
So $\pi$ is not almost one-to-one.

(3)$\Rightarrow$(1)
If $\pi$ is proximal, then by Proposition \ref{925-1},
$(X,T)$ is strongly $\F_{ip}$-sensitive.
So we assume that $\pi$ is not proximal. This implies that $P(X)$ is not closed.
So there is a distal pair $(y,z)$ and
proximal pairs $(y_i,z_i)$ such that $(y_i,z_i)\lra (y,z)$.
Let $\inf_{n\in\Z_+}d(T^ny,T^nz)=4\delta$.

Fix a non-empty open subset $U$ of $X$.
As $y$ is a minimal point, there exists $k\in\N$ such that $T^ky\in U$.
There exists $n\in \N$ such that $T^ky_n\in U\cap B(T^ky,\delta)$ and $d(T^kz_n,T^kz)<\delta$.
Let $x_1=T^ky_n$ and $x_2=T^kz_n$.
Then $x_1\in U$, $d(x_1,x_2)>\delta$ and $(x_1,x_2)$ is proximal.
As $x_2$ is a minimal point, $x_1$ is strongly proximal to $x_2$.
Then the result follows from Proposition~\ref{929-1}.
\end{proof}

\section{Strong sensitivity for other families}\label{section5}

In this section we study strong sensitivity for other families and shall prove Theorem~ D. Namely, we
will investigate the properties of strong $\F_t$- and strong $\F_{Poin_d}$-sensitivity.



\subsection{Strong \texorpdfstring{$\F_t$}{Ft}-sensitivity}

In this subsection, we discuss strong $\F_t$-sensitivity, and prove Theorem D.
Recall that for a t.d.s. $(X,T)$, we say $(X,T)$ is strongly $\F_t$-sensitive if there is $\delta>0$
such that for each opene subset $U$ of $X$, there are $x,y\in U$ with
$\{n\in\Z_+:d(T^nx,T^ny)>\delta\}\in\F_t$.
So $(X,T)$ is not strongly $\F_t$-sensitive if there are $\delta_n\lra 0$ and opene subsets $U_n$
such that for any $x_n,y_n\in U_n$, there is a syndetic subset $F$ of $\Z_+$ with
$d(T^mx_n,T^my_n)\le \delta_n$ for all $m\in F$.




To prove Theorem D, we first show that strong $\F_t$-sensitivity
passes through proximal extensions.

\begin{prop}\label{1009-2} Let $\pi:(X,T)\lra (Y,S)$ be a proximal extension of minimal systems. If $(Y,S)$ is not strongly $\F_t$-sensitive,
then  neither is $(X,T)$.
\end{prop}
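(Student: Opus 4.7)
The plan is to use the equivalent characterization of the hypothesis recalled just before the proposition: $(Y,S)$ is not strongly $\F_t$-sensitive iff for every $\epsilon>0$ there is an opene $V\subset Y$ such that, for all $y,y'\in V$, the set $\{n\in\Z_+: d_Y(S^ny,S^ny')\le\epsilon\}$ is syndetic. I want to establish the analogous property for $(X,T)$ at a prescribed level $\epsilon>0$, with $U:=\pi^{-1}(V)$ for a suitable opene $V\subset Y$ coming from the hypothesis at a much smaller level $\delta$, upgrading $Y$-closeness on a syndetic set to $X$-closeness on a slightly shifted syndetic set via proximality of the extension.

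The key technical step I would isolate is a \emph{uniform proximality} estimate for $\pi$: since $R_\pi$ is a closed, hence compact, subset of $X\times X$ on which every pair is proximal, for each $\epsilon'>0$ there exists $N=N(\epsilon')\in\N$ such that every $(a,b)\in R_\pi$ satisfies $d_X(T^na,T^nb)<\epsilon'$ for some $0\le n\le N$. The proof is a routine finite-subcover argument on $R_\pi$ using the open sets $\{(a',b'):d_X(T^{n_{a,b}}a',T^{n_{a,b}}b')<\epsilon'\}$, where $n_{a,b}$ realizes proximality of $(a,b)$. A companion compactness argument yields, for each $\eta>0$, a $\delta>0$ such that $d_Y(\pi(a),\pi(b))<\delta$ forces the existence of some $(a^*,b^*)\in R_\pi$ with $d_X(a,a^*),d_X(b,b^*)<\eta$; otherwise a sequence of counterexamples would limit to a point of $R_\pi$ at positive distance from $R_\pi$.

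Given $\epsilon>0$, I set $\epsilon'=\epsilon/3$, obtain $N$ from the uniform proximality lemma, pick $\eta>0$ small enough that the joint equicontinuity of $T^0,\ldots,T^N$ gives $d_X(\cdot,\cdot)<\eta\Rightarrow d_X(T^n\cdot,T^n\cdot)<\epsilon'$ for all $0\le n\le N$, then pick $\delta$ from the companion compactness argument, and finally apply the hypothesis on $(Y,S)$ at level $\delta$ to obtain an opene $V\subset Y$. Setting $U:=\pi^{-1}(V)$, for any $x_1,x_2\in U$ the set $F:=\{m:d_Y(S^m\pi(x_1),S^m\pi(x_2))\le\delta\}$ is syndetic, say with gap bounded by $K$. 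For each $m\in F$ I approximate $(T^mx_1,T^mx_2)$ by some $(a^*_m,b^*_m)\in R_\pi$ within $\eta$ in each coordinate, choose $n_m\in\{0,\ldots,N\}$ with $d_X(T^{n_m}a^*_m,T^{n_m}b^*_m)<\epsilon'$, and apply the triangle inequality to get $d_X(T^{m+n_m}x_1,T^{m+n_m}x_2)<3\epsilon'=\epsilon$. The shifted set $\{m+n_m:m\in F\}$ meets every window of length $K+N$, hence is syndetic, which yields the characterization of non-strong-$\F_t$-sensitivity for $(X,T)$ at level $\epsilon$.

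The main obstacle, and where the assumption that $\pi$ is proximal enters essentially, is the uniform proximality lemma: converting the pointwise fact ``each fiber pair is proximal at some time depending on the pair'' into a bound $N$ uniform over all of $R_\pi$. Compactness of $R_\pi$ is what makes this possible; after it is in place, the rest of the argument is the combination of triangle inequalities, uniform continuity of the finitely many iterates $T^0,\ldots,T^N$, and the bounded-shift stability of syndeticity.
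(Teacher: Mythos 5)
Your argument is correct, but it takes a genuinely different route from the paper's. The paper proceeds by contradiction: assuming $(X,T)$ is strongly $\F_t$-sensitive with constant $\delta$, it chooses pairs $u_k,v_k$ over the good open sets $U_k\subset Y$, observes that the thick set of $\delta$-separation times must contain a whole window $[b_k-k,b_k+k]$ around some time $b_k$ at which the images in $Y$ are $\delta_k$-close, and passes to a limit of $(T^{b_k}u_k,T^{b_k}v_k)$ to extract a distal pair lying in a single fiber of $\pi$, contradicting proximality of the extension. You prove the implication directly by uniformizing first: your key lemma---every pair in $R_\pi$ becomes $\epsilon'$-close within a time $N$ uniform over $R_\pi$---is exactly the quantitative form of ``$R_\pi$ contains no distal pair,'' which is the statement the paper's limit argument contradicts; the finite-subcover proof you sketch for it is sound since $R_\pi$ is closed, hence compact. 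Together with your second compactness lemma (pairs whose $\pi$-images are close are close to genuine fiber pairs) and the observation that a syndetic set shifted by amounts in $[0,N]$ remains syndetic, this transfers the syndetic set of $\delta$-stability times from $Y$ to a syndetic set of $\epsilon$-stability times for $X$ on $U=\pi^{-1}(V)$. The paper's proof is shorter; yours avoids subsequence extraction and yields explicit constants (the syndeticity gap only worsens by $N$). The one cosmetic mismatch is that your companion lemma is stated for $d_Y(\pi(a),\pi(b))<\delta$ while you invoke it at times where that distance is only known to be $\le\delta$; applying the hypothesis on $(Y,S)$ at level $\delta/2$ fixes this.
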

\begin{proof}Let $d,d'$ be the compatible metrics of $X,Y$ respectively.
Since $(Y,S)$ is not strongly $\F_t$-sensitive,
there are $\delta_k \ra 0$ and opene subsets
$U_k$ of $Y$ such that if $x_k,y_k\in U_k$ then there is a syndetic subset $F$ (depends on $x_k,y_k$) with $d'(S^nx_k,S^ny_k)<\delta_k$ for every $n\in F$.

Assume the contrary that $(X,T)$ is strongly $\F_t$-sensitive with a sensitive constant $\delta>0$.
Then for each opene subset $U$, there are $x,y\in U$ such that $\{n\in\Z_+: d(T^nx,T^ny)>\delta\}\in \F_t$.
Thus, there are $u_k,v_k\in \pi^{-1}(U_k)$ such that $F_k:=\{n\in \mathbb{Z}_+:  d(T^nu_k,T^nv_k)> \delta\}\in \F_t$.
Note that $E_k:=\{n\in\Z_+:d'(S^n\pi(u_k),S^n\pi(v_k))<\delta_k\}$ is a syndetic set.
This implies that there exists $b_k\in\N$ such that $$d'(S^{b_k}\pi(u_k),S^{b_k}\pi(v_k))<\delta_k \
 \text{and} \ d(T^{j}(u_k),T^{j}(v_k))>\delta$$ for $j\in [b_k-k,b_k+k]$.
Without loss of generality, assume that $T^{b_k}u_k\ra u,T^{b_k}v_k\ra v$. Then $d(T^nu,T^nv)\geq \delta, \forall n\in \Z_+$. Since
$\pi(T^{b_k}u_k)\ra \pi(u)$, $\pi(T^{b_k}v_k)\ra \pi(v)$ and $d'(S^{b_k}\pi(u_k),S^{b_k}\pi(v_k))<\delta_k$, we conclude that
$\pi(u)=\pi(v)$, a contradiction. This indicates that $(X,T)$ is not strongly $\F_t$-sensitive, ending the proof.
\end{proof}

\begin{prop}\label{961-2} Let $(X,T)$ be a minimal system. If $\pi\colon X\to X_D$ is  proximal,
then $(X,T)$ is not strongly $\F_{t}$-sensitive.
\end{prop}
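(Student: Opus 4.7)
The plan is to reduce to the case of the distal factor itself via Proposition~\ref{1009-2} and then to show, using basic properties of distal systems, that a minimal distal system is never strongly $\F_t$-sensitive.

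First, I would observe that since $\pi\colon X \to X_D$ is a proximal extension and $(X_D, T)$ is a minimal system, Proposition~\ref{1009-2} reduces the task to showing that $(X_D,T)$ itself is not strongly $\F_t$-sensitive. This is the conceptual skeleton: proximal extensions cannot introduce strong $\F_t$-sensitivity, so the whole question gets pushed onto the distal factor.

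Second, for the distal part, the key observation is that the product system $(X_D \times X_D, T \times T)$ is distal, because the product of two distal systems is distal (if the orbit of a pair approaches another pair, each coordinate approaches, contradicting distality). In a distal system every point is a minimal (uniformly recurrent) point, so by Gottschalk--Hedlund every open neighborhood of a point has syndetic return times. Given $\delta > 0$, I would choose a non-empty open set $U \subset X_D$ with $\diam U < \delta/2$, which exists by compactness. For any pair $x,y \in U$ we have $d(x,y) < \delta/2$, so the open set $V_\delta = \{(u,v) \in X_D \times X_D : d(u,v) < \delta/2\}$ is a neighborhood of $(x,y)$. Since $(x,y)$ is a minimal point of $X_D \times X_D$, the set
\[
N((x,y), V_\delta) = \{n \in \Z_+ : d(T^n x, T^n y) < \delta/2\}
\]
is syndetic. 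Its complement is therefore not thick, and in particular $\{n \in \Z_+ : d(T^n x, T^n y) > \delta\}$ is not thick. Since $\delta > 0$ was arbitrary, $(X_D, T)$ fails the definition of strong $\F_t$-sensitivity for every candidate constant.

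Combining the two steps yields the conclusion: $(X_D,T)$ is not strongly $\F_t$-sensitive, and by Proposition~\ref{1009-2} neither is $(X,T)$. There is not really a hard step here; the only point that requires a moment's care is distinguishing ``not thick'' from ``syndetic complement'' and keeping track of strict versus non-strict inequalities in the diameter bound, which is why I take $\diam U < \delta/2$ rather than $<\delta$. Everything else is standard: distality of products, minimality of every point in a distal system, and syndeticity of return times for minimal points.
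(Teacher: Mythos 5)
Your proof is correct and follows essentially the same route as the paper: the paper's proof is the one-line "By Proposition~\ref{1009-2} and Theorem~\ref{925-1}", i.e.\ reduce to the distal factor via Proposition~\ref{1009-2} and then use that a minimal distal system is not strongly $\F_t$-sensitive. You simply spell out the second step (distality of the product, minimality of every point, syndetic return times), which the paper leaves to a citation.
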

\begin{proof}
By Proposition~ \ref{1009-2} and Theorem \ref{925-1}.
\end{proof}

To prove the converse of Theorem D, we need the structure theorem.
So we assume that $T$ is a homeomorphism first. When $(X,T)$ is not invertible,
we use natural extension to prove Theorem D.

Recall that an extension $\pi : X \to Y$ of minimal systems is a {\em
relatively incontractible (RIC) extension}\ if it is open and for
every $n \ge 1$ the minimal points are dense in the relation
$$
R^n_\pi = \{(x_1,\dots,x_n) \in X^n : \pi(x_i)=\pi(x_j),\ \forall \
1\le i \le j \le n\}.
$$

We say that a minimal system $(X,T)$ is a {\em strictly PI system}
if there is an ordinal $\eta$ (which is countable when $X$ is
metrizable) and a family of systems
$\{(W_\iota,w_\iota)\}_{\iota\le\eta}$ such that (i) $W_0$ is the
trivial system, (ii) for every $\iota<\eta$ there exists a
homomorphism $\phi_\iota:W_{\iota+1}\to W_\iota$ which is either
proximal or equicontinuous, (iii) for a limit ordinal $\nu\le\eta$
the system $W_\nu$ is the inverse limit of the systems
$\{W_\iota\}_{\iota<\nu}$,  and (iv) $W_\eta=X$. We say that $(X,T)$
is a {\em PI-system} if there exists a strictly PI system $\tilde X$
and a proximal homomorphism $\theta:\tilde X\to X$.

We have the following structure theorem for minimal systems

\begin{lem}[Structure theorem for minimal systems, \cite{EGS}]\label{structure}
Given a homomorphism $\pi: X \to Y$ of minimal dynamical system,
there exists an ordinal $\eta$ (countable when $X$ is metrizable)
and a canonically defined commutative diagram (the canonical
PI-Tower)
\begin{equation*}
\xymatrix
         {X \ar[d]_{\pi}             &
      X_0 \ar[l]_{{\theta}^*_0}
          \ar[d]_{\pi_0}
          \ar[dr]^{\sigma_1}         & &
      X_1 \ar[ll]_{{\theta}^*_1}
          \ar[d]_{\pi_1}
          \ar@{}[r]|{\cdots}         &
      X_{\nu}
          \ar[d]_{\pi_{\nu}}
          \ar[dr]^{\sigma_{\nu+1}}       & &
      X_{\nu+1}
          \ar[d]_{\pi_{\nu+1}}
          \ar[ll]_{{\theta}^*_{\nu+1}}
          \ar@{}[r]|{\cdots}         &
      X_{\eta}=X_{\infty}
          \ar[d]_{\pi_{\infty}}          \\
         Y                 &
      Y_0 \ar[l]^{\theta_0}          &
      Z_1 \ar[l]^{\rho_1}            &
      Y_1 \ar[l]^{\theta_1}
          \ar@{}[r]|{\cdots}         &
      Y_{\nu}                &
      Z_{\nu+1}
          \ar[l]^{\rho_{\nu+1}}          &
      Y_{\nu+1}
          \ar[l]^{\theta_{\nu+1}}
          \ar@{}[r]|{\cdots}         &
      Y_{\eta}=Y_{\infty}
     }
\end{equation*}
where for each $\nu\le\eta, \pi_{\nu}$ is RIC, $\rho_{\nu}$ is
isometric, $\theta_{\nu}, {\theta}^*_{\nu}$ are proximal and
$\pi_{\infty}$ is RIC and weakly mixing of all orders. For a limit
ordinal $\nu ,\  X_{\nu}, Y_{\nu}, \pi_{\nu}$ etc. are the inverse
limits (or joins) of $ X_{\iota}, Y_{\iota}, \pi_{\iota}$ etc. for
$\iota < \nu$.

Thus if $Y$ is trivial, then $X_\infty$ is a proximal extension of
$X$ and a RIC weakly mixing extension of the strictly PI-system
$Y_\infty$. The homomorphism $\pi_\infty$ is an isomorphism (so that
$X_\infty=Y_\infty$) if and only if  $X$ is a PI-system.
\end{lem}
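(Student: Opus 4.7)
The plan is a transfinite induction that constructs the two towers simultaneously. Since this is a deep structural result cited from \cite{EGS}, I will sketch the general strategy rather than attempt to reproduce the full argument. The underlying framework is the Ellis enveloping semigroup $E(X)$ (the closure of $\{T^n : n \in \Z\}$ in $X^X$) together with its minimal idempotents, whose action on fibers governs the structure of any factor map $\pi : X \to Y$. Two basic operations drive the induction: \emph{(i)} a \emph{RIC modification}, which takes any extension and composes it with a proximal extension above to make it RIC; and \emph{(ii)} extraction of a maximal isometric subextension from a RIC extension that fails to be weakly mixing of some order, obtained via the relative regional proximal relation.

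Given these tools, I would proceed by transfinite induction. Set $X_0 = X$, $Y_0 = Y$, applying \emph{(i)} once initially to ensure $\pi_0$ is RIC. At a successor ordinal $\nu+1$, assuming $\pi_\nu : X_\nu \to Y_\nu$ is RIC, first extract a maximal isometric subextension $\rho_{\nu+1} : Z_{\nu+1} \to Y_\nu$; then apply \emph{(i)} to the induced map $X_\nu \to Z_{\nu+1}$ to obtain a RIC extension $\pi_{\nu+1} : X_{\nu+1} \to Y_{\nu+1}$, together with proximal extensions $\theta_{\nu+1} : Y_{\nu+1} \to Z_{\nu+1}$ on the base and $\theta^*_{\nu+1} : X_{\nu+1} \to X_\nu$ on the top. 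At a limit ordinal, take inverse limits throughout the diagram; RIC-ness, proximality, and isometricity all transfer through such limits in the appropriate sense, so $\pi_\nu$ remains RIC at limit stages as well.

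The main obstacle is to show that the construction terminates at some countable ordinal $\eta$ and that the terminal extension $\pi_\infty$ is weakly mixing of all orders. Termination follows from a cardinality argument exploiting the countable topological base of the metrizable space $X$: the sub-$\sigma$-algebra pulled back along the tower must strictly grow at any nontrivial successor step, so it can grow only countably many times before stabilizing. Once it stabilizes, operation \emph{(ii)} can no longer extract any isometric subextension, and a relative version of the Furstenberg dichotomy for RIC extensions then forces $\pi_\infty$ to be weakly mixing of all orders; proving this relative dichotomy is the most technically delicate step, as it requires controlling the regional proximal relation of arbitrary order in the relative setting. The final characterization of PI systems is then essentially tautological from the construction: $X$ is a PI system precisely when the terminal weakly mixing extension $\pi_\infty$ is an isomorphism, i.e.\ when the tower uses only proximal and isometric building blocks.
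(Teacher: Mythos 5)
The paper does not prove this statement at all: it is quoted verbatim as a known deep result and attributed to Ellis--Glasner--Shapiro \cite{EGS}, so there is no internal argument to compare yours against. Your outline does correctly reproduce the architecture of the proof in that reference: the transfinite alternation of (a) passing to the maximal isometric subextension of a RIC extension via the relative regionally proximal relation and (b) the RIC ``shadow diagram'' modification, which replaces an arbitrary extension by a RIC one at the cost of proximal extensions on top and bottom; termination at a countable ordinal for metrizable $X$; and the identification of the terminal extension as weakly mixing of all orders via the RIC dichotomy.

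As a proof, however, the proposal has real gaps rather than omitted routine details. The two steps you flag as ``tools'' are precisely the substance of the theorem: the construction of the RIC shadow diagram requires the Ellis group and the circle operation on quasi-factors (it is not a formal composition with ``a proximal extension above''), and the dichotomy that a RIC extension with no nontrivial isometric subextension is weakly mixing \emph{of all orders} is the hardest part of \cite{EGS}; neither is established here. Two smaller points: the termination argument should be phrased in terms of the weight of the spaces or a strictly increasing transfinite chain of closed invariant equivalence relations in a second-countable compactum, not pulled-back $\sigma$-algebras (that is the measure-theoretic analogue); and the ``only if'' direction of the final claim --- that $X$ being PI forces $\pi_\infty$ to be an isomorphism --- is not tautological but rests on the canonicity/maximality of the tower, which your construction does not address. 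Since the paper itself treats the lemma as a black box, citing \cite{EGS} as you implicitly do is the appropriate resolution; presenting the sketch as a proof would not be.
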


\begin{lem}\cite[Lemma 7.16]{DSY12}\label{dsy2012}
Let $\pi: X\lra Y$  be a weakly mixing and RIC extension of minimal
systems. Then there is a dense $G_\delta$ subset $Y_0$ of $Y$ such that, for each $y\in Y_0$
and each $x\in\pi^{-1}(y)$, $P_{\pi}[x]$ is dense in $\pi^{-1}(y)$,
where $P_{\pi}[x]=\{z\in \pi^{-1}(\pi(x)): (x,z)\in P(X)\}.$
\end{lem}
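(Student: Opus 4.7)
The plan is to combine the weak mixing of the RIC extension $\pi\colon X\to Y$ with a Baire category argument on $Y$, using the Ellis semigroup to convert the weak mixing statement into a statement about proximal pairs inside individual fibers. Morally, the lemma is the relative analogue of the classical fact that in a weakly mixing minimal system the proximal cell of every point is dense in the whole space.

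Two tools drive the argument. First, since $\pi$ is RIC and weakly mixing, the relative product system $(R_\pi,T\times T)$, where $R_\pi=\{(x_1,x_2)\in X\times X:\pi(x_1)=\pi(x_2)\}$, is topologically transitive, and its set of transitive points is a dense $G_\delta$ subset of $R_\pi$. Second, writing $E(X,T)$ for the Ellis (enveloping) semigroup of $(X,T)$, a pair $(x,z)$ is proximal iff there is a minimal idempotent $u\in E(X,T)$ with $ux=uz$. The lemma can then be viewed as saying that the transitivity provided by the first tool can be refined along fibers of $\pi$ to produce proximal pairs in the sense of the second tool.

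Next I would perform a Baire category reduction on $Y$. Fix a countable basis $\{V_n\}_{n\ge 1}$ of $X$ and, for each pair of indices $(m,n)$, set
$$B_{m,n}=\{y\in Y:\pi^{-1}(y)\cap V_m=\emptyset\}\cup\{y\in Y:\text{every }x\in\pi^{-1}(y)\cap V_m\text{ satisfies }P_\pi[x]\cap V_n\neq\emptyset\}.$$
Then $Y_0:=\bigcap_{m,n}B_{m,n}$ is contained in the set described by the lemma, so it suffices to show that each $B_{m,n}$ contains a dense $G_\delta$ subset of $Y$.

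For the density step, let $W$ be a non-empty open subset of $Y$. From a transitive point of $R_\pi$ I produce, by iterating $T\times T$ and using minimality of $Y$, a pair $(x_0,z_0)\in R_\pi$ with $\pi(x_0)=\pi(z_0)=y_0\in W$, $x_0\in V_m$, and $z_0\in V_n$; applying a minimal idempotent $u\in E(X,T)$ converts $(x_0,z_0)$ into a proximal pair in the same fiber with the same openness properties. The hard step, and the main obstacle I anticipate, is to upgrade the \emph{existence} of such a proximal partner in $V_n$ for one particular $x_0\in V_m\cap\pi^{-1}(y_0)$ to the statement that \emph{every} $x\in\pi^{-1}(y_0)\cap V_m$ admits some $z\in\pi^{-1}(y_0)\cap V_n$ with $(x,z)\in P(X)$. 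Here the RIC hypothesis is essential: it ensures that on a dense $G_\delta$ subset $Y_1\subset Y$ the relative Ellis group of $\pi$ acts transitively on each fiber $\pi^{-1}(y)$, and this action preserves the proximal relation, so density of $P_\pi[x_0]$ in the fiber propagates to every $x$ in the same fiber. Intersecting with $Y_1$ then yields the required dense $G_\delta$ in each $B_{m,n}$, completing the plan.
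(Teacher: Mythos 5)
First, a remark on scope: the paper does not prove this lemma at all --- it is quoted verbatim from \cite[Lemma 7.16]{DSY12} --- so there is no internal argument to compare yours against. Judged on its own terms, your outline has the right skeleton (transitivity of $(R_\pi,T\times T)$, a Baire-category reduction over $Y$ using that RIC extensions are open, and a correct identification of where the difficulty sits), but it contains one incorrect step and one genuine gap.

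The incorrect step: applying a minimal idempotent $u$ to a pair $(x_0,z_0)\in R_\pi$ does not ``convert it into a proximal pair in the same fiber with the same openness properties.'' In a distal system $(ux_0,uz_0)$ is proximal only when $x_0=z_0$, and in general $u$ is a discontinuous element of $E(X,T)$ that moves points out of $V_m\times V_n$ entirely. What you actually want here is the standard observation that every transitive point of $R_\pi$ is \emph{already} a proximal pair, since its orbit closure is $R_\pi\supseteq\Delta_X$; forward iterates of transitive points remain transitive, so the pair you construct over $y_0\in W$ can be taken proximal directly, with no idempotent involved.

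The genuine gap is exactly the step you flag as hard: upgrading ``some $x_0\in V_m\cap\pi^{-1}(y_0)$ has a proximal partner in $V_n$'' to ``every $x\in V_m\cap\pi^{-1}(y_0)$ does.'' Your proposed resolution --- that RIC makes the relative Ellis group act transitively on fibres over a residual set of $y$, and that this action transports density of $P_\pi[x_0]$ to $P_\pi[x]$ --- does not hold up. For a RIC extension the Ellis group orbit of a base point is only \emph{dense} in the fibre, not equal to it, so the action is not transitive on any fibre; and even on that orbit, enveloping-semigroup elements are in general discontinuous, hence do not carry dense subsets of the fibre to dense subsets. Transitivity of $R_\pi$ together with a Kuratowski--Ulam argument for the open map $R_\pi\to Y$ only yields density of $P_\pi[x]$ for a \emph{residual} set of $x$ in the fibre, and since an open dense subset of $X$ need not contain any complete fibre, no purely categorical argument closes the remaining quantifier. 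That upgrade is precisely the content of \cite[Lemma 7.16]{DSY12} and requires the finer structure theory of RIC weakly mixing extensions; as written, your outline assumes it rather than proves it.
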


\begin{thm}\label{1009-4} Let $(X,T)$ be minimal. If $(X,T)$ is not strongly $\F_t$-sensitive,
then $(X,T)$ is PI.
\end{thm}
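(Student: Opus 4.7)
The plan is to prove the contrapositive using the canonical PI-tower from Lemma \ref{structure} applied with trivial base $Y$. This yields a proximal factor map $\theta^*_0 : X_\infty \to X$ and a RIC, weakly mixing extension $\pi_\infty : X_\infty \to Y_\infty$ over the strictly PI-system $Y_\infty$. Recall that $(X,T)$ is PI exactly when $\pi_\infty$ is an isomorphism, so that $X_\infty = Y_\infty$ is itself PI and $\theta^*_0$ realizes $X$ as a proximal extension of this PI-system. I will show that if $\pi_\infty$ is \emph{not} an isomorphism then $(X_\infty,T)$ is strongly $\F_t$-sensitive; by Proposition \ref{1009-2} applied to $\theta^*_0$ this would force $(X,T)$ to be strongly $\F_t$-sensitive as well, contradicting the hypothesis. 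Hence $\pi_\infty$ must be an isomorphism and $(X,T)$ is PI.

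Suppose then that $R_{\pi_\infty} \supsetneq \Delta_{X_\infty}$. Because $\pi_\infty$ is RIC, the minimal points of $(R_{\pi_\infty}, T\times T)$ are dense (the $n=2$ case of the definition of RIC), so I can pick a minimal point $(a,b) \in R_{\pi_\infty}$ with $a \ne b$. Let $M$ be its $(T\times T)$-orbit closure, a minimal subset of $R_{\pi_\infty}$. If $M \cap \Delta_{X_\infty} \ne \emptyset$, minimality of $(X_\infty,T)$ would force $M = \Delta_{X_\infty}$, contradicting $a \ne b$. So $M \cap \Delta_{X_\infty} = \emptyset$ and by compactness $\delta' := \min_{(u,v) \in M} d(u,v) > 0$; in particular $(a,b)$ is a distal pair with $d(T^n a, T^n b) \ge \delta'$ for every $n \in \Z_+$. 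The weak mixing of $\pi_\infty$ (the $n=2$ case) now furnishes a transitive point $(x_0,y_0) \in R_{\pi_\infty}$ whose $(T\times T)$-orbit is dense in $R_{\pi_\infty}$.

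Given any opene $U \subset X_\infty$, pick $p \in U$; since $(p,p)$ lies in $\Delta_{X_\infty} \subset R_{\pi_\infty}$, density of the orbit of $(x_0,y_0)$ in $R_{\pi_\infty}$ furnishes some $n$ with both coordinates of $(T^n x_0, T^n y_0)$ falling inside $U$. Put $x := T^n x_0$ and $y := T^n y_0$. Then $(x,y) \in U \times U$ and $\omega(x,y) = \omega(x_0,y_0) = R_{\pi_\infty} \ni (a,b)$. For every $L \in \N$ I pick a time $m$ at which $(T^m x, T^m y)$ is close enough to $(a,b)$ that uniform continuity of $T, T^2, \dots, T^L$ yields $d(T^{m+j} x, T^{m+j} y) > \delta'/2$ for $j = 0,1,\dots,L$. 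This produces arbitrarily long blocks of consecutive integers inside $\{m \in \Z_+ : d(T^m x, T^m y) > \delta'/2\}$, establishing strong $\F_t$-sensitivity of $(X_\infty,T)$ with constant $\delta'/2$.

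The main obstacle I anticipate is the simultaneous-approximation step: we must land both coordinates of a single $(T \times T)$-iterate of $(x_0,y_0)$ inside the given opene $U$, which is only possible because the weak mixing of $\pi_\infty$ gives density of the orbit inside $R_{\pi_\infty}$ (not all of $X_\infty \times X_\infty$) and, crucially, the diagonal is already contained in $R_{\pi_\infty}$. The non-invertible case can then be handled by passing to the natural extension and invoking Lemma \ref{natral-1}, using that the PI property is respected by the natural extension construction.
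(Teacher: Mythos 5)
Your proof is correct, and while it rests on the same skeleton as the paper's (the canonical PI-tower of Lemma \ref{structure} with trivial base, plus the reduction to showing that a non-isomorphic $\pi_\infty$ forces strong $\F_t$-sensitivity), the key step is carried out by a genuinely different and somewhat more economical mechanism. The paper first proves a local claim -- if some distal pair $(x,y)$ admits points $z_i\to x$, $z_i\ne x$, with $(y,z_i)$ proximal, then the system is strongly $\F_t$-sensitive -- and then manufactures such a configuration in $X$ itself by invoking Lemma \ref{dsy2012} (density of the proximal cell $P_{\pi_\infty}[u]$ in fibers of a weakly mixing RIC extension) and pushing the resulting pairs down through the proximal map $\theta^*$. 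You instead work entirely upstairs in $X_\infty$: you use only the $n=2$ content of RIC (minimal points dense in $R_{\pi_\infty}$, yielding an off-diagonal minimal, hence distal, pair $(a,b)$ at uniform distance $\delta'$) and of weak mixing (transitivity of $R_{\pi_\infty}$, yielding a transitive pair that can be placed in $U\times U$ because $\Delta\subset R_{\pi_\infty}$), and you extract thickness directly from the fact that the orbit of that pair accumulates on $(a,b)$; sensitivity then descends to $X$ via Proposition \ref{1009-2}. This buys you independence from Lemma \ref{dsy2012}, which is the one nontrivial external input in the paper's argument beyond the structure theorem. Two small points you should make explicit: the assertion $\omega(x_0,y_0)=R_{\pi_\infty}$ deserves a line -- it suffices to note that $(a,b)$ cannot lie on the (at most finite initial segment of the) orbit of $(x_0,y_0)$, since otherwise $R_{\pi_\infty}$ would be a finite set united with $\overline{\Orb((a,b))}$, which misses the infinite set $\Delta$; and the sensitivity constant $\delta'$ is fixed by the single minimal set $M$ chosen in advance, so it is indeed uniform over all opene $U$, which is what the definition requires.
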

\begin{proof} First we claim: if $(X,T)$ is minimal, and there is $x\in X$ such that $(x,y)$ is a
distal pair, and $y$ is proximal to $z_i\in X$ with $z_i\ra x,\ z_i\not=x,\ i\in\N$, then $(X,T)$ is  strongly $\F_t$-sensitive.

Let $\delta=\frac{1}{3}\inf_{n\in \N} d(T^nx,T^ny)$ and fix an opene set $U$ of $X$. Then there is
$l\in\N$ with $T^lx\in U$ by the minimality of $X$. This implies that $(T^lx,T^ly)$ is a distal pair
and $T^ly$ is proximal to $T^lz_i$ with
$T^lz_i\ra T^lx$, $T^lz_i \not =T^lx$. There is $i\in \N$ such that $T^lz_i\in U$.
Since $(T^ly,T^lz_i)$ is proximal,
we get that $\{n\in \Z_+: d(T^{n+l}y,T^{n+l}z_i)<\delta\}\in \F_t$.
This implies that $\{n\in \N: d(T^{n+l}x,T^{n+l}z_i)>\delta\}\in \F_t$.
We conclude that $(X,T)$ is strongly $\F_t$-sensitive, finishing the proof of the claim.

\medskip

Assume that $(X,T)$ is not PI. By Lemma \ref{structure},
  $\theta^*:X_\infty \ra X$ is proximal, $\pi_\infty:X_\infty\ra Y_\infty$ is weakly mixing, RIC and not an isomorphism.
By Lemma \ref{dsy2012}, there are $s\in Y_\infty$  and $u\in \pi_\infty^{-1}(s)$ such that $P_{\pi_\infty}[u]$ is
dense in the  $\pi_\infty^{-1}(s)$.  Since $\pi_\infty$ is not proximal, there is $v\in X_\infty$ such that $(u,v)$
is distal and $\pi_\infty(v)=\pi_\infty(u)$. Since $\theta^*$ is proximal, we know that
$(\theta^*(v),\theta^*(u))$ is distal. As $P_{\pi_\infty}[u]$ is dense in the  $\pi_\infty^{-1}(s)$,  there
are $v_i\ra v$ such that $v_i\not= v$ and $(v_i,u)$ is proximal. This implies that $(\theta^*(v_i),\theta^*(u))$ is proximal.
It is clear that $\theta^*(v_i)\not=\theta^*(v)$ and $\theta^*(v_i)\ra \theta^*(v)$.
Applying the claim we just proved, we conclude that $(X,T)$ is strongly $\F_t$-sensitive, a contradiction.
\end{proof}

Before proving the following key result for Theorem D we need two well known lemmas.

\begin{lem} \label{limit} Let $\pi:X\lra Y$ be an open factor map between two t.d.s. Assume that $y\in Y$
and $y_i\ra y$. Then for any $z\in \pi^{-1}(y)$ there are $z_i\in \pi^{-1}(y_i)$ such that $\lim z_i=z$.
\end{lem}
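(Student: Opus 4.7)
The plan is to exploit the openness of $\pi$ together with a shrinking basis of neighborhoods of $z$ to build the sequence $z_i$ by a standard diagonal construction. The statement is a classical fact about open continuous surjections between metric spaces, so the argument should be short and essentially topological; the only ingredient from dynamics is the existence of $\pi$.

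First I would fix a countable decreasing basis $\{U_n\}_{n\ge 1}$ of open neighborhoods of $z$ in $X$, for instance $U_n = B(z,1/n)$. Since $\pi$ is open, each $V_n := \pi(U_n)$ is an open subset of $Y$ containing $y = \pi(z)$. Because $y_i \to y$, for each $n$ there exists $N_n \in \N$ such that $y_i \in V_n$ for all $i \ge N_n$; by replacing the sequence $N_n$ with $\max(N_n, N_{n-1}+1)$, I may assume $1 \le N_1 < N_2 < \cdots$ is strictly increasing.

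Now I would define the desired sequence: for $i < N_1$, pick any $z_i \in \pi^{-1}(y_i)$ (this is non-empty since factor maps are surjective); for $N_n \le i < N_{n+1}$, use $y_i \in V_n = \pi(U_n)$ to choose $z_i \in U_n$ with $\pi(z_i) = y_i$. By construction $z_i \in \pi^{-1}(y_i)$ for all $i$, and for $i \ge N_n$ we have $z_i \in U_n$, so $d(z_i, z) < 1/n$ eventually. Hence $z_i \to z$, completing the proof.

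There is no real obstacle here; the only subtlety is to make sure the nesting of the indices $N_n$ is handled so that every $i$ receives a definition and the tail is inside $U_n$. Openness of $\pi$ is used exactly once, to guarantee that preimages can be chosen inside a prescribed neighborhood of $z$; without openness the conclusion would fail. I would state the argument cleanly in one short paragraph in the final write-up.
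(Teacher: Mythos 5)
Your proof is correct and complete: the diagonal construction via the shrinking basis $U_n=B(z,1/n)$, the open images $V_n=\pi(U_n)\ni y$, and the choice of preimages in $U_n$ once $y_i\in V_n$ is exactly the standard argument for this fact. The paper states this lemma as well known and omits the proof entirely, so your write-up supplies precisely the justification the authors left implicit.
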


Let $E(X,T)$ be the enveloping semigroup of $(X,T)$.

\begin{lem}\label{surjection} Let $\pi:X\lra Y$ be a distal factor map between two minimal t.d.s.
Then $\pi$ is open and $\pi^{-1}(py)=p\pi^{-1}(y)$ for any $y\in Y$ and any $p\in E(X)$.
\end{lem}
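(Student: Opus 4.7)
Both conclusions are classical results of Ellis on distal extensions of minimal flows; my plan is to derive them from a single consequence of distality applied within the enveloping semigroup $E=E(X,T)$. Let $\phi\colon E(X,T)\to E(Y,S)$ be the induced continuous surjective semigroup homomorphism. For each $x_0\in X$ one has a minimal left ideal $M\subset E$ and a minimal idempotent $u\in M$ with $ux_0=x_0$; the stabilizer $G=\{g\in M:gx_0=x_0\}$ is then a group with identity $u$, the Ellis group at $x_0$. The crucial ingredient, and the only place where distality is used, is the following \emph{key fact}: if $u\in E$ is any minimal idempotent and $y\in Y$ satisfies $\phi(u)y=y$, then $ux=x$ for every $x\in\pi^{-1}(y)$. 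Indeed, $\pi(ux)=\phi(u)\pi(x)=y=\pi(x)$ puts $(x,ux)\in R_\pi$, while $u(ux)=u^2x=ux=u(x)$ makes $(x,ux)$ proximal, so $R_\pi\cap P(X,T)=\Delta_X$ forces $ux=x$.

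To prove the fiber identity $\pi^{-1}(py)=p\pi^{-1}(y)$, the inclusion $p\pi^{-1}(y)\subseteq\pi^{-1}(py)$ is immediate. For the reverse, fix $z\in\pi^{-1}(py)$ and $x_0\in\pi^{-1}(y)$, together with $u\in M$ as above. Minimality of $X$ yields $Mx_0=X$, so there is $q\in M$ with $qx_0=z$; moreover $\phi(q)y=\pi(z)=py=\phi(p)y$. The plan is to exploit the group structure of $G$ (in which inverses exist) to produce $g\in G$ with $w:=gx_0\in\pi^{-1}(y)$ and $pw=z$. Concretely, the equality $\phi(q)y=\phi(p)y$ together with the key fact applied at $py$ (which is fixed by $\phi(u)$ thanks to the relation $up=p$ after suitable normalization into the group) lets the required $g$ be chosen so that $pgx_0=uqx_0=uz=z$, giving $pw=z$.

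Openness of $\pi$ follows the same template. Given $y_\alpha\to y$ in $Y$ and $x_0\in\pi^{-1}(y)$, use $\phi(E)y=Y$ to select $p_\alpha\in E$ with $\phi(p_\alpha)y=y_\alpha$, and set $x_\alpha:=p_\alpha x_0\in\pi^{-1}(y_\alpha)$ (which belongs to the fiber by the identity just established). By compactness of $E$, pass to a subnet so that $p_\alpha\to p\in E$, necessarily with $\phi(p)y=y$. A further left-multiplication correction inside $G$ (which preserves $\phi(p_\alpha)y=y_\alpha$, and hence the membership $x_\alpha\in\pi^{-1}(y_\alpha)$, by the key fact) arranges $px_0=x_0$, giving $x_\alpha\to x_0$.

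\emph{The main obstacle} will be the Ellis-semigroup bookkeeping: selecting minimal left ideals and idempotents compatible with the right base points so that the key fact applies where needed, and performing the inversion and translation manipulations inside the Ellis group $G$. Once these standard manipulations are in place, the entire proof hinges on applying the key fact first at $y$ (to set up the action of $u$ on $\pi^{-1}(y)$) and then at $py$ (to conclude $uz=z$).
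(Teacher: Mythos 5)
First, a point of reference: the paper does not prove this lemma at all --- it is introduced as one of ``two well known lemmas'' and cited without argument --- so there is no in-paper proof to compare yours against. Your overall route is the classical Ellis-semigroup one, and your \emph{key fact} (a minimal idempotent $u$ with $\phi(u)y=y$ restricts to the identity on $\pi^{-1}(y)$, because $(x,ux)\in R_\pi$ is proximal and $R_\pi\cap P(X,T)=\Delta_X$) is correctly stated and correctly proved; it is indeed the only place distality enters. Your openness argument is also essentially right, provided the final ``correction'' is made by replacing $x_0$ with a point $x_0'\in\pi^{-1}(y)$ satisfying $px_0'=x_0$ (such a point exists by the fiber identity applied with $py=y$), rather than by modifying $p$ on the left, which would disturb the relation $\phi(p_\alpha)y=y_\alpha$.

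The genuine gap is in the fiber identity, in two places. (i) Your $G=\{g\in M:gx_0=x_0\}$ is the stabilizer of $x_0$, so any $w=gx_0$ with $g\in G$ is just $x_0$; the group you actually need is a group $wM$ inside the minimal left ideal, not the stabilizer. (ii) The step $uz=z$ is unjustified: your $u$ satisfies $\phi(u)y=y$, hence acts as the identity on $\pi^{-1}(y)$, but $z$ lies in $\pi^{-1}(py)$, and $\phi(u)(py)=py$ would require $up=p$; no normalization of the form $p\mapsto pu$ produces this (one gets $(pu)u=pu$, not $u(pu)=pu$). The correct bookkeeping uses a \emph{second} idempotent: with $u\in M$ idempotent and $\phi(u)y=y$, note $pu\in M$ since $M$ is a left ideal; let $w$ be the idempotent of $M$ with $w(pu)=pu$ and let $s\in wM$ be the group inverse of $pu$, so $(pu)s=s(pu)=w$. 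From $w(pu)=pu$ and $\phi(u)y=y$ one gets $\phi(w)(py)=py$, so the key fact applied \emph{at $py$ with $w$} gives $wz=z$. Now set $x:=usz$. Then $px=\bigl((pu)s\bigr)z=wz=z$, and $\pi(x)=\phi(u)\phi(s)(py)=\phi(u)\phi(w)y=\phi(uw)y=\phi(u)y=y$, the last step using the standard identity $uw=u$ for idempotents $u,w$ in the same minimal left ideal. So your plan is salvageable and is the standard proof, but the idempotent that fixes $py$ is $w$, not $u$, and the inversion must be performed in $wM$; as written, the ``concretely'' sentence of your fiber-identity paragraph does not go through.
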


\begin{thm}\label{2-1-1} \label{very-i}Let $(X_3,T)$ be minimal and $X_1\overset{\pi_1}{\leftarrow}X_2
\overset{\pi_2}{\leftarrow}X_3$, where $\pi_1$ is a non-trivial proximal
extension, $\pi_2$ is a non-trivial distal extension and $X_1$ is distal.
If $P(X_3)$ is not closed then $X_3$ is strongly $\F_t$-sensitive.
\end{thm}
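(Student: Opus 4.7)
The plan is to invoke the key claim established at the start of the proof of Theorem~\ref{1009-4}: strong $\F_t$-sensitivity of $(X_3,T)$ follows once one exhibits a distal pair $(x,y)\in X_3\times X_3$ together with a sequence $z_i\to x$ with $z_i\neq x$ and $(y,z_i)\in P(X_3)$ for every $i$. The whole task is therefore to manufacture this asymmetric configuration out of the symmetric datum ``$P(X_3)$ is not closed'', and the three-storey hypothesis ($X_1$ distal, $\pi_1$ proximal, $\pi_2$ distal) is used essentially at every step.

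From the non-closedness of $P(X_3)$ I extract $(a_n,b_n)\in P(X_3)$ converging to a distal pair $(a,b)\notin P(X_3)$ with $a\neq b$. Because $X_1$ is distal, the factor map $\pi_1\pi_2$ collapses every proximal pair, so $\pi_1\pi_2(a_n)=\pi_1\pi_2(b_n)$ for each $n$ and $\pi_1\pi_2(a)=\pi_1\pi_2(b)$ in the limit; combined with $\pi_1$ being a proximal extension this forces $(\pi_2(a),\pi_2(b))\in R_{\pi_1}=P(X_2)$, so that the downstairs pair is proximal in $X_2$ even though the upstairs pair $(a,b)$ is distal.

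Next I exploit the distality of $\pi_2$ via the Ellis-semigroup formalism to lift this downstairs collapse into $X_3$. Pick a minimal idempotent $v\in E(X_2)$ with $v\pi_2(a)=v\pi_2(b)$ and lift it, along the surjective semigroup homomorphism $\pi_2^{\ast}\colon E(X_3)\to E(X_2)$, to a minimal idempotent $u\in E(X_3)$ (such a lift exists because the minimal ideal of $E(X_3)$ maps onto the minimal ideal of $E(X_2)$). Then $\pi_2(ua)=v\pi_2(a)=v\pi_2(b)=\pi_2(ub)$, placing $ua$ and $ub$ in a common $\pi_2$-fibre; they are distinct since $(a,b)\notin P(X_3)$, and the distality of $\pi_2$ forces $(ua,ub)$ itself to be a distal pair. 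Declare $x:=ua$ and $y:=ub$.

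The genuinely delicate step is to produce the sequence $z_i\to x$ with $(y,z_i)\in P(X_3)$ and $z_i\neq x$. My plan is to combine Lemma~\ref{surjection} (openness of $\pi_2$ together with the fibre identity $\pi_2^{-1}(p\cdot)=p\pi_2^{-1}(\cdot)$), Lemma~\ref{limit}, the separate continuity of the $E(X_3)$-action on $X_3$, and the $E(X_3)$-invariance of the proximal relation. For each $n$ I pick a minimal idempotent $w_n\in E(X_3)$ with $w_n a_n=w_n b_n$, and use $\pi_2$-fibrewise lifts adjacent to that of $x$ (supplied by Lemmas~\ref{surjection} and \ref{limit}) to build candidates $z_n\in X_3$: separate continuity of the $E(X_3)$-action gives $z_n\to x$, the distal fibre structure of $\pi_2$ (distinct fibre points are uniformly distal) rules out $z_n=x$ along a cofinal set of $n$, and $E$-invariance of $P(X_3)$ transfers the proximality of $(a_n,b_n)$ through the idempotent $u$ to yield $(y,z_n)\in P(X_3)$. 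The main obstacle is realising these three conditions simultaneously with one coherent choice of lifts, and it is precisely here that all three hypotheses ($X_1$ distal, $\pi_1$ proximal, $\pi_2$ distal) become indispensable: the $X_1$-distality restricts proximal pairs to $\pi_1\pi_2$-fibres, the $\pi_1$-proximality makes $P(X_2)=R_{\pi_1}$ closed so the downstairs lifting is clean, and the $\pi_2$-distality supplies the fibre formulae. Once the triple $(x,y,\{z_n\})$ is in hand, the claim from Theorem~\ref{1009-4} delivers strong $\F_t$-sensitivity immediately.
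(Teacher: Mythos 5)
Your reduction target --- the claim opening the proof of Theorem~\ref{1009-4}, which asks for a \emph{fixed} distal pair $(x,y)$ together with points $z_i\to x$, $z_i\neq x$, each proximal to the \emph{same} point $y$ --- is a legitimate sufficient condition, and your steps producing the distal pair $(ua,ub)$ in a common $\pi_2$-fibre are fine modulo standard Ellis-semigroup facts. But the proof has a genuine gap exactly where you flag ``the main obstacle'': the construction of the sequence $z_n$. The raw datum supplied by non-closedness of $P(X_3)$ is a sequence of proximal pairs $(a_n,b_n)$ whose \emph{proximal partners vary with} $n$; converting this into infinitely many points proximal to one fixed anchor $y$ is the whole difficulty, and none of the three mechanisms you name achieves it. In particular, ``separate continuity of the $E(X_3)$-action'' goes the wrong way: in $E(X)\subset X^X$ the map $p\mapsto px$ is continuous for fixed $x$, but an element $p\in E(X)$ is in general a discontinuous self-map of $X$, so $a_n\to a$ does \emph{not} give $ua_n\to ua$, and no choice of lifts built by applying fixed semigroup elements to the $a_n,b_n$ will converge to $x=ua$ for continuity reasons. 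Likewise, the asserted ``$E$-invariance of $P(X_3)$'' transferring proximality of $(a_n,b_n)$ ``through the idempotent $u$'' is not justified (and is delicate precisely when $P$ is not closed); even granting it, it would only make $z_n$ proximal to some point near $y$, not to $y$ itself.

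The paper's proof sidesteps this by \emph{not} seeking a fixed anchor. It takes the limit distal pair $(x_1,x_2)$ (made minimal in $X_3\times X_3$), forms $M=\overline{\Orb((x_1,x_2),T\times T)}$ and the first-coordinate projection $p:M\to X_3$, which is a distal, hence open, extension; Lemmas~\ref{limit} and \ref{surjection} then yield, for each $i$, a companion $x_2'(i)$ with $(x_1(i),x_2'(i))\in M\cap(U_1\times U_2)$. Every pair in $M$ satisfies $\inf_n d(T^nz_1,T^nz_2)\geq\delta$, so $x_2'(i)$ is uniformly $\delta$-distal from $x_1(i)$, while $x_2(i)$ is proximal to $x_1(i)$; hence the two points $x_2(i),x_2'(i)$, both lying in the arbitrary small neighborhood $U_2$ of $x_2$, separate by $\delta/6$ on a thick set. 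Here the ``anchor'' $x_1(i)$ changes with $i$, which is exactly what makes the argument go through without the fixed-anchor configuration your proposal requires but does not construct. If you want to salvage your route, you would need an analogue of the density statement used in Theorem~\ref{1009-4} (there obtained from the structure theorem via $P_{\pi_\infty}[u]$ being dense in a fibre), which is not available from the hypotheses of Theorem~\ref{very-i}.
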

\begin{proof}
Since $P(X_3)$ is not closed, there are a distal pair $(x_1,x_2)\in X_3\times X_3$  and proximal pairs $(x_1(i),x_2(i))\in X_3\times X_3$
for all $i\in\N$ such that $(x_1(i),x_2(i))\lra (x_1,x_2)$. Let $\pi=\pi_1\pi_2$.
It is clear that $\pi(x_1(i))=\pi(x_2(i))$
since $X_1$ is distal. This implies that $\pi(x_1)=\pi(x_2)$. Moreover, we may assume that $(x_1,x_2)$ is a minimal
point. As $(\pi_2(x_1),\pi_2(x_2))$
is proximal and minimal we know that $\pi_2(x_1)=\pi_2(x_2).$ Put $\delta=\inf_{n\in\Z_+}d(T^nx_1,T^nx_2)$ and
$U_i$ be an open neighborhood of $x_i$ with $\diam(U_i)<\delta/6$, $1\le i\le 2$.

Set $y=\pi_2(x_1)$ and $y_i=\pi_2(x_1(i)),\ i\in\N$. Then $\lim_{i\ra\infty}y_i=y$.
Let $$M=\overline{orb((x_1,x_2), T\times T)}\ \text{and}\ K=\{x\in X_3:(x_1,x)\in M\}\subset \pi_2^{-1}(y).$$
It is clear that $x_2\in K$. Moreover, $M$ is a minimal subsystem of $X_3\times X_3$
and for any $(z_1,z_2)\in M$ we have that $\pi_2(z_1)=\pi_2(z_2)$.
Let $p:M\lra X_3$ be the projection to the first coordinate. Then $p^{-1}(x_1)=\{x_1\}\times K$ and $p$ is a distal extension.
Put $p^{-1}(x_1(i))=\{x_1(i)\}\times K_i,\ i\in\N$.

Since $M\cap (U_1\times U_2)$ is an open neighborhood of $(x_1,x_2)$ and $p$ is open, by Lemma \ref{limit} there are
$x'_2(i)\in K_i$ such that $(x_1(i), x'_2(i))\in M\cap (U_1\times U_2)$ since $\lim_{i\ra\infty}x_1(i)=x_1$. Note that $x'_2(i)\in K_i$
and thus $\pi_2(x'_2(i))=y_i$.

We can choose a sequence $\{n_k\}$ such that $T^{n_k}(x_1)\lra (x_1(i))$.
Then there is $z\in K$ such that $T^{n_k}(x_1,z)\lra (x_1(i),x'_2(i))\in M\cap (U_1\times U_2)$ by
Lemma~ \ref{surjection} using the distality of $p$.

As $(x_1(i),x_2(i))$ is proximal, $(x_1(i),x'_2(i))$ (in the orbit closure of $(x_1,x_2)$) is distal
and $x_2(i),x'_2(i)\in U_2$ we know that
$$\{n\in\Z_+:d(T^nx_1(i), T^nx_2(i))<\delta/6\}\in \F_t.$$ By the definition of $\delta$ we get
$\inf_{k\in\Z_+} d(T^kx_1(i),T^kx'_2(i))\ge \delta$ which implies that
$$\{n\in\Z_+:d(T^nx_2(i), T^nx'_2(i))>\delta/6\}\in \F_t.$$
Since this holds for each neighborhood of $x_2$, we conclude that $X_3$ is strongly $\F_t$-sensitive.




\end{proof}

\begin{lem}\label{5001}Let $(Z_{n+1},T)$ be minimal and consider the strictly PI tower
$ Z_1\overset{\theta_1}{\leftarrow}Y_1\overset{\rho_2}{\leftarrow}Z_2 \overset{\theta_2}{\leftarrow}
Y_2\overset{\rho_3}{\leftarrow}Z_3\overset{\theta_3}{\leftarrow}  \ldots \overset{\rho_n}{\leftarrow}
Z_n\overset{\theta_n}{\leftarrow}Y_n\overset{\rho_{n+1}}{\leftarrow}Z_{n+1}$, where $\theta_i$ is a non-trivial proximal
extension, $\rho_i$ is a non-trivial distal extension and $Z_1$ is distal.
If $(Z_{n+1},T)$ is not strongly $\F_t$-sensitive, then $P(Z_{n+1})$ is closed.
\end{lem}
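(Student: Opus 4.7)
The plan is to proceed by induction on $n$.

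For the base case $n=1$, the tower is simply $Z_1 \overset{\theta_1}{\leftarrow} Y_1 \overset{\rho_2}{\leftarrow} Z_2$, which matches the hypotheses of Theorem~\ref{2-1-1} with $X_1=Z_1$, $X_2=Y_1$, $X_3=Z_2$. The contrapositive of Theorem~\ref{2-1-1} gives the desired conclusion directly.

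For the inductive step, assume the statement for towers of length $n-1$, and let $(Z_{n+1},T)$ not be strongly $\F_t$-sensitive. Since factor maps between minimal systems are semi-open, Lemma~\ref{easy-check} (in contrapositive form) applied to the composite factor map $Z_{n+1}\to Z_n$ shows that $(Z_n,T)$ is also not strongly $\F_t$-sensitive. The truncated tower $Z_1\leftarrow Y_1\leftarrow\cdots\leftarrow Z_n$ has length $n-1$ and satisfies all hypotheses of the lemma, so by the inductive hypothesis $P(Z_n)$ is closed.

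Since $Z_n$ is minimal and $P(Z_n)$ is a closed invariant equivalence relation, set $W:=Z_n/P(Z_n)$, the maximal distal factor of $Z_n$; the quotient map $Z_n\to W$ is a proximal extension. A short compactness argument (combining proximality in two stages) shows that the composition of two proximal extensions of minimal systems is again proximal, so $Y_n\to W$, obtained by composing $\theta_n$ with $Z_n\to W$, is proximal. If this composed map happens to be an isomorphism, then $Y_n$ is distal, and hence its distal extension $Z_{n+1}$ is distal, so $P(Z_{n+1})=\Delta_{Z_{n+1}}$ is trivially closed. Otherwise the three-step tower $W\leftarrow Y_n\leftarrow Z_{n+1}$ satisfies all hypotheses of Theorem~\ref{2-1-1}: $W$ is distal, $Y_n\to W$ is a non-trivial proximal extension, and $\rho_{n+1}\colon Z_{n+1}\to Y_n$ is a non-trivial distal extension. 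The contrapositive of Theorem~\ref{2-1-1} then yields that $P(Z_{n+1})$ is closed, completing the induction.

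The main subtleties are (i) verifying that the composition of proximal extensions of minimal systems is proximal, which is a standard compactness argument using the continuity of $T^m$ to combine two stages of proximality, and (ii) checking that the composed extension $Y_n\to W$ is non-trivial; the latter follows from the assumed non-triviality of $\theta_n$, since an isomorphism $Y_n\cong W$ would force both $Y_n\cong Z_n$ and $Z_n\cong W$, contradicting non-triviality of $\theta_n$. These small verifications aside, the induction reduces the general PI tower to the three-level case already handled by Theorem~\ref{2-1-1}.
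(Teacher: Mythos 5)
Your proof is correct and uses essentially the same strategy as the paper: induction on the length of the tower with Theorem~\ref{2-1-1} as the base case, combined with the facts that a factor of a non--strongly $\F_t$-sensitive minimal system is again not strongly $\F_t$-sensitive, that the quotient by a closed proximal relation is the maximal distal factor with proximal quotient map, and that proximal extensions compose. The only difference is the direction in which the tower is collapsed: the paper absorbs the bottom floors $Z_1\leftarrow Y_1\leftarrow Z_2$ into the single distal system $Z_2/P(Z_2)$ and recurses on the shortened tower still ending at $Z_{k+1}$ (so the conclusion comes from the inductive hypothesis), whereas you apply the inductive hypothesis to the truncated tower ending at $Z_n$ and finish with one further application of Theorem~\ref{2-1-1} to $W\leftarrow Y_n\leftarrow Z_{n+1}$; both routes use identical ingredients, and your explicit verification of the non-triviality hypotheses of Theorem~\ref{2-1-1} is a detail the paper leaves implicit.
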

\begin{proof}We prove this lemma by induction on $n$. For $n=1$ it is just Theorem \ref{very-i}.
Now we assume that the theorem holds for $n\leq k-1$, we prove it still hold for $n=k$.
Let $Z_D$ be the maximal distal factor of $Z_2$ and let $\pi_1: Z_2\rightarrow Z_D$ be the factor map.
Since $P(Z_{2})$ is closed, $\pi_1$ is proximal. So $\pi_2=\pi_1\theta_2: Y_2\rightarrow Z_D$ is proximal.
Consider the new PI tower $$ Z_D \overset{\pi_2}{\leftarrow}
Y_2\overset{\rho_3}{\leftarrow}Z_3\overset{\theta_3}{\leftarrow}  \ldots \overset{\rho_k}{\leftarrow}
Z_n\overset{\theta_k}{\leftarrow}Y_k\overset{\rho_{k+1}}{\leftarrow}Z_{k+1}$$ and the theorem holds for $n\leq k-1$,
we know that $P(Z_{k+1})$ is closed, i.e. the theorem holds for $n=k$.
\end{proof}

We also need the following two  lemmas for the proof of Theorem D.

\begin{lem}\label{5002}Let $\pi: X\rightarrow Y$ be a factor map between minimal systems.
\begin{enumerate}
\item  If $P(X)$ is closed, then $P(Y)$ is closed.
\item  If $\pi$ is proximal and $P(Y)$ is closed, then $P(X)$ is closed
\end{enumerate}
\end{lem}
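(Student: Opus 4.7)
My plan is to reduce both parts to identities relating $P(X)$ and $P(Y)$ through the map $\pi\times\pi\colon X\times X\to Y\times Y$, and then exploit compactness/continuity. The key tool is the induced surjective semigroup homomorphism $\pi_{\ast}\colon E(X)\to E(Y)$ between the Ellis semigroups (surjectivity is standard: any net $T^{n_\alpha}\to p$ in $E(Y)$ refines to $T^{n_\alpha}\to\tilde p$ in $E(X)$, and $\pi_\ast(\tilde p)=p$).

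For part (1) I would prove the identity
\[
(\pi\times\pi)(P(X))=P(Y).
\]
The inclusion $\subseteq$ is immediate from the continuity of $\pi$. For $\supseteq$ I would invoke the standard lifting of proximal pairs for factor maps of minimal systems: given $(y_1,y_2)\in P(Y)$ and any $x_1\in\pi^{-1}(y_1)$, there exists $x_2\in\pi^{-1}(y_2)$ with $(x_1,x_2)\in P(X)$. One proves this by fixing a minimal left ideal $I\subset E(X)$ and a minimal idempotent $u\in I$ with $ux_1=x_1$; then $\pi_\ast(I)$ is a minimal left ideal of $E(Y)$, and there is a minimal idempotent $v\in \pi_\ast(I)$ with $vy_1=vy_2$ (since $(y_1,y_2)\in P(Y)$), which one lifts to the desired $x_2=u'x_2'$ for a suitable $u'\in I$ and any $x_2'\in\pi^{-1}(y_2)$. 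Once the identity is established, part (1) follows at once: if $P(X)$ is closed it is compact, hence its continuous image $P(Y)=(\pi\times\pi)(P(X))$ is compact and in particular closed.

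For part (2) I would prove, using the hypothesis that $\pi$ is proximal (i.e.\ $R_\pi\subseteq P(X)$), the identity
\[
P(X)=(\pi\times\pi)^{-1}(P(Y)).
\]
Again $\subseteq$ is trivial. For $\supseteq$, suppose $(\pi(x_1),\pi(x_2))\in P(Y)$, so there is $p\in E(Y)$ with $p\pi(x_1)=p\pi(x_2)$; lift to $\tilde p\in E(X)$ with $\pi_\ast(\tilde p)=p$. Then $\pi(\tilde p x_1)=p\pi(x_1)=p\pi(x_2)=\pi(\tilde p x_2)$, so $(\tilde p x_1,\tilde p x_2)\in R_\pi\subseteq P(X)$ by the proximality of $\pi$. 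Choose $q\in E(X)$ with $q(\tilde p x_1)=q(\tilde p x_2)$; then $(q\tilde p)x_1=(q\tilde p)x_2$, so $(x_1,x_2)\in P(X)$. Granting the identity, if $P(Y)$ is closed then $P(X)$ is the preimage of a closed set under the continuous map $\pi\times\pi$, hence closed.

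The main obstacle is the lifting-of-proximal-pairs step used for (1); it is the only part that really uses the fine structure of minimal left ideals in $E(X)$, whereas (2) only needs the much softer surjectivity of $\pi_\ast$ combined with the defining property of a proximal extension. In the write-up I would probably invoke the lifting lemma by reference (it is standard in Auslander's and Glasner's books on minimal flows), keeping the semigroup manipulations in part (2) fully explicit since they are elementary.
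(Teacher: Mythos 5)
Your proposal is correct, and for part (2) it coincides with the paper's argument: the paper takes a convergent sequence of proximal pairs $(x_i,x_i')\to(x,x')$, pushes it down to $P(Y)$, uses closedness there to get $p\in E(X)$ with $p\pi(x)=p\pi(x')$, and then applies proximality of $\pi$ to the resulting pair in $R_\pi$ — exactly the content of your identity $P(X)=(\pi\times\pi)^{-1}(P(Y))$, just phrased sequentially instead of as a preimage of a closed set. The genuine difference is in part (1): the paper gives no argument at all, simply citing Lemma 2 of Auslander--Horelick, whereas you derive it from the identity $(\pi\times\pi)(P(X))=P(Y)$ together with the fact that continuous images of compact sets are closed. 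That route is sound, but it places all the weight on the lifting-of-proximal-pairs lemma, which you correctly identify as the only nontrivial ingredient and propose to cite. One small wrinkle in your sketch of that lemma: a proximal pair $(y_1,y_2)$ satisfies $qy_1=qy_2$ for all $q$ in \emph{some} minimal left ideal $J$ of $E(Y)$, not every one, so you cannot first fix a minimal left ideal $I\subset E(X)$ and then hope $\pi_\ast(I)$ contains the needed idempotent; you should first choose $J$ witnessing proximality of $(y_1,y_2)$ and then take a minimal left ideal $I\subset\pi_\ast^{-1}(J)$, lifting an idempotent of $J$ into $I$ via an idempotent of the closed subsemigroup $\pi_\ast^{-1}(v)\cap I$. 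Since you defer to the standard reference for this lemma, the write-up as planned is fine; your approach buys a self-contained proof of (1) at the cost of invoking the Ellis-semigroup machinery explicitly, while the paper's citation keeps the lemma short.
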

\begin{proof}(1) follows from Lemma 2 in \cite{AH}.

(2) Let $(x_i,x_i')$ be proximal pairs in $P(X)$ such that $(x_i,x_i')\rightarrow (x,x')$.
Then $(\pi(x_i),\pi(x_i'))$ are proximal pairs in $P(Y)$ such that $(\pi(x_i),\pi(x_i'))\rightarrow (\pi(x),\pi(x'))$.
Since $P(Y)$ is closed, $(\pi(x),\pi(x'))\in P(Y)$. So there exists $p\in E(X)$ (where $E(X)$ is the Ellis semigroup of $X$)
 such that $p\pi(x)=p\pi(x')$, i.e. $\pi(px)=\pi(px')$. Since $\pi$ is proximal, there exists $q\in E(X)$ such that $qpx=qpx'$, i.e.,
$(x,x')\in P(X)$. So $P(X)$ is closed.
\end{proof}

\begin{lem}\label{5003}Let $X$ be an inverse limit of minimal systems $\{(X_i,T_i)\}_{i=1}^{\infty}$.
If $P(X_i)$ is closed for each $i\in\N$, then $P(X)$ is closed.
\end{lem}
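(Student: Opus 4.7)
The plan is to show that proximality in the inverse limit is detected coordinate-by-coordinate, and then push closure through each factor.

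First I would establish the key reduction: a pair $(x,y) \in X \times X$ is proximal if and only if $(\pi_i(x),\pi_i(y)) \in P(X_i)$ for every $i \in \N$, where $\pi_i \colon X \to X_i$ is the canonical projection. One direction is immediate, since factor maps send proximal pairs to proximal pairs (this is what underlies the ``only if'' direction of Lemma~\ref{5002}(1) and follows straight from the definition: if $d(T^n x, T^n y) \to 0$ along some subsequence then so does $d_i(T^n \pi_i(x), T^n \pi_i(y))$ by continuity of $\pi_i$).

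For the converse, fix $\eps > 0$ and use the explicit inverse-limit metric $\rho(a,b) = \sum_{i \in \N} 2^{-i} d_i(a_i, b_i)$ with $\diam(X_i) \le M$. Choose $N$ so large that $\sum_{i>N} 2^{-i} M < \eps/2$. Now invoke the assumption $(\pi_N(x), \pi_N(y)) \in P(X_N)$ to find $n$ with $d_N(T^n \pi_N(x), T^n \pi_N(y))$ so small that, by uniform continuity of the bonding maps $\phi_{N-1} \circ \cdots \circ \phi_i \colon X_N \to X_i$ for $1 \le i \le N$, the head sum $\sum_{i \le N} 2^{-i} d_i(T^n \pi_i(x), T^n \pi_i(y))$ is below $\eps/2$. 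This gives $\rho(T^n x, T^n y) < \eps$, as required, so $(x,y) \in P(X)$.

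With the reduction in hand, closure is straightforward. Let $(x_n, y_n) \in P(X)$ with $(x_n, y_n) \to (x, y)$ in $X \times X$. For each fixed $i$, the pairs $(\pi_i(x_n), \pi_i(y_n))$ lie in $P(X_i)$ (factor maps preserve proximal pairs) and converge to $(\pi_i(x), \pi_i(y))$. Since by hypothesis $P(X_i)$ is closed, $(\pi_i(x), \pi_i(y)) \in P(X_i)$. As this holds for every $i$, the key equivalence above yields $(x,y) \in P(X)$, so $P(X)$ is closed.

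The only genuinely non-routine point is the quantitative coordinate-wise argument in the second paragraph, and even there the work is just choosing $N$ via the geometric tail and exploiting that $\pi_i$ factors through $\pi_N$ for $i \le N$, so a single time $n$ achieving proximity in $X_N$ automatically achieves proximity in all earlier coordinates simultaneously. No enveloping semigroup or minimal-idempotent machinery is needed.
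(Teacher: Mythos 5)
Your proof is correct, but it takes a genuinely different route from the paper. The paper disposes of the lemma in two lines by citing Theorem~2 of Auslander's 1960 paper, which asserts that a (full) product of systems each having closed proximal relation again has closed proximal relation; since the inverse limit is a closed invariant subset of $\prod_i X_i$ and proximality in a subsystem is just proximality in the ambient system restricted to it, $P(X)=P\bigl(\prod_i X_i\bigr)\cap(X\times X)$ is closed. You instead prove the sharper structural fact that in an inverse limit (unlike in a general product) proximality is detected coordinate-by-coordinate: the tail of the metric $\rho$ is controlled by the uniform diameter bound, and a single time $n$ witnessing proximity in coordinate $N$ automatically witnesses it in all coordinates $i\le N$ because $\pi_i$ factors through $\pi_N$ via the uniformly continuous bonding maps. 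This equivalence is exactly what fails for general products (where one would need the same times $n$ to work in all coordinates, which is why Auslander's product theorem is a nontrivial result typically proved with enveloping-semigroup machinery), so your argument is genuinely exploiting the inverse-limit structure rather than borrowing a stronger theorem. What you gain is a self-contained, elementary proof requiring no external citation and no algebraic machinery; what the paper gains is brevity and reuse of a classical result. Both are valid, and your coordinate-wise characterization of $P(X)$ is a clean statement worth isolating in its own right.
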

\begin{proof} If $P(X_i)$ is closed, then by Theorem 2 in \cite{Auslander1960}, $P(\Pi_{i=1}^{\infty} X_i)$ is closed.
By the definition of inverse limit, $P(X)$ is closed.
\end{proof}

With the above preparation we are ready to give the proof.

\begin{proof}[Proof of Theorem D]
(2) $\Rightarrow$ (1) follows from Proposition~\ref{961-2}.

(1) $\Rightarrow$ (2) We assume that $(X,T)$ is invertible first.

By Proposition \ref{1009-4} $(X,T)$ is PI. Consider the
strictly PI-tower in the structure theorem,
$$Z_1\overset{\theta_1}{\leftarrow}Y_1\overset{\rho_2}{\leftarrow}Z_2 \overset{\theta_2}{\leftarrow}
Y_2\overset{\rho_3}{\leftarrow}Z_3\overset{\theta_3}{\leftarrow}  \ldots X_\infty.$$
By Proposition \ref{1009-2} $X_\infty$ is not strongly $\F_t$-sensitive.
So each finite tower
$$ Z_1\overset{\theta_1}{\leftarrow}Y_1\overset{\rho_2}{\leftarrow}Z_2 \overset{\theta_2}{\leftarrow}
Y_2\overset{\rho_3}{\leftarrow}Z_3\overset{\theta_3}{\leftarrow}  \ldots \overset{\rho_n}{\leftarrow}
Z_n\overset{\theta_n}{\leftarrow}Y_n\overset{\rho_{n+1}}{\leftarrow}Z_{n+1}$$
is not strongly $\F_t$-sensitive.

\medskip

Then By Lemma \ref{5001}, $P(Z_{n+1})$ is closed.
So $P(Y_{n})$ is closed by Lemma \ref{5002}. By Lemma \ref{5003}, $P(X_{\infty})$ is closed.
By Lemma \ref{5002}, $P(X)$ is closed. So $P(X)$ is an equivalence relation~\cite{Sha},
then $\pi\colon X\to X_D$ is proximal.

When $(X,T)$ is not invertible,
let $(\widetilde{X},\widetilde{T})$ be the natural extension of $(X,T)$.
If $P(X,T)$ is not closed, then by Lemma \ref{5002} $P(\widetilde{X},\widetilde{T})$ is not closed.
Since $(\widetilde{X},\widetilde{T})$ is an invertible minimal system, $(\widetilde{X},\widetilde{T})$ is
strong $\F_{t}$-sensitive. So by Proposition \ref{natral-1}, $(X,T)$ is strong
$\F_{t}$-sensitive, a contradiction. So $P(X,T)$ is closed, then $\pi\colon X\to X_D$ is proximal.

\end{proof}

To get a better understanding of Theorem \ref{2-1-1}, we give a well know example which is strongly $\F_t$-sensitive.

To do so, first we give some other criteria of strongly $\F_t$-sensitivity.
\begin{prop}\label{exam00} Let $(X,T)$ be minimal and invertible. If there are $x\not=y$ such that $x,y$
is proximal for $T^{-1}$ and $\inf_{n\in \Z_+}d(T^nx,T^ny)>0$, then $(X,T)$ is strongly $\F_t$-sensitive.
\end{prop}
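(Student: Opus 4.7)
The plan is to exhibit, given any opene $U \subset X$, a pair $(u,v) \in U \times U$ whose forward orbits stay $\ge 3\delta$-apart from some time onward, where $3\delta := \inf_{n \in \Z_+} d(T^n x, T^n y) > 0$ by hypothesis. Once we have such a pair, the set $\{n \in \Z_+ : d(T^n u, T^n v) > \delta\}$ contains a cofinite subset of $\Z_+$, which is thick (indeed thickly syndetic), so $(X,T)$ is strongly $\F_t$-sensitive with sensitivity constant $\delta$.

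The construction uses $T^{-1}$-proximality to pull $(x,y)$ together and then minimality of $(X,T)$ to translate the common limit into $U$. Since $(x,y)$ is proximal for $T^{-1}$, pick $n_k \to \infty$ with $d(T^{-n_k}x, T^{-n_k}y) \to 0$; after passing to a subsequence, $T^{-n_k}x \to w$ and $T^{-n_k}y \to w'$, and these two limits coincide (say $w = w' $) because the distance between them tends to zero. By the minimality of $(X,T)$, the forward $T$-orbit of $w$ is dense, so there exists $l \in \Z_+$ with $T^l w \in U$; by continuity of $T^l$, both $u := T^{l-n_k}x$ and $v := T^{l-n_k}y$ lie in $U$ for all sufficiently large $k$. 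Fixing such a $k$, whenever $j \ge n_k - l$ the exponent $j + l - n_k$ is a nonnegative integer, so the forward-separation hypothesis yields
\[
d(T^j u, T^j v) \;=\; d(T^{j + l - n_k}x, T^{j + l - n_k}y) \;\ge\; 3\delta \;>\; \delta,
\]
which is exactly what is required.

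No serious obstacle arises beyond bookkeeping with the indices and the signs: invertibility of $T$ is needed only to make sense of $T^{-n_k}$, while minimality enters only once, to produce the translation constant $l$. The argument in fact proves slightly more than claimed, namely that the witnessing set can be taken cofinite rather than merely thick, which aligns with \textbf{Proposition}~\ref{961-2} and the characterisation in \textbf{Theorem}~D: the hypothesis forces $(x,y)$ to be a non-proximal pair in $R_\pi$ for $\pi\colon X \to X_D$, so $\pi$ cannot be proximal.
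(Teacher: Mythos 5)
Your proof is correct and follows essentially the same route as the paper: both arguments use $T^{-1}$-proximality together with minimality to place a backward translate of the forward-separated pair $(x,y)$ inside $U$, and then observe that the separation set of this translated pair is cofinite in $\Z_+$, hence thick. The only difference is cosmetic --- the paper locates the backward time by intersecting a thick set (the near-times of the $T^{-1}$-proximal pair) with a syndetic set (the return times to a small ball, via minimality of $T^{-1}$), whereas you extract a common limit point $w$ of the backward orbits and push it forward into $U$ by minimality; both devices produce the same pair of points in $U$.
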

\begin{proof} Let $\inf_{n\in \Z_+}d(T^nx,T^ny)=2\delta>0$, $U$ be any open set of $X$ and $l\in \N$ with $T^lx\in U$.
Put $x_1=T^lx$ and $y_1=T^ly$, then $(x_1,y_1)$ is proximal for $T^{-1}$ and $\inf_{n\in \Z_+}d(T^nx_1,T^ny_1)\geq 2\delta$.
Since $U$ is a neighborhood of $x_1$, there is $\ep>0$ such that $B_\ep(x_1)\subset U$ for $\ep<\frac{\delta}{10}$.
Set $V=B_{\frac{\ep}{2}}(x_1)$.
Since $x_1,y_1$ is proximal for $T^{-1}$, $\{n<0: d(T^nx_1,T^ny_1)<\ep/2\}$ is thick in $\Z_-$. As $(X,T)$ is minimal,
we know that $(X,T^{-1})$ is minimal. Thus,
$\{n<0: T^nx_1\in V\}$ is syndetic in $\Z_-$. There is $s<0$ such that $T^sx_1\in V$ and $d(T^sx_1,T^sy_1)<\ep/2$.
This implies that $T^{s}x_1,T^{s}y_1\in U$. Set $z_1=T^{s}x_1$ and $z_2=T^{s}y_1$. Then
$\{m\in\Z_+:d(T^mz_1,T^mz_2)>\delta\}=[-s,\infty)$ is thick.
So $(X,T)$ is strongly $\F_t$-sensitive.
\end{proof}

We will give an application of Proposition \ref{exam00}, namely we shall show that the Morse minimal system is strongly $\F_t$-sensitive.
The following results related to Morse system are basic and well known, see for example \cite{g-book}.

The Morse sequence $\omega(n)$:
$$0110100110010110\cdots $$
 can be described by the following algorithms.

$\omega(0)=0, \omega(2n)=\omega(n), \omega(2n+1)=1-\omega(n) (n\in \N).$
Considering $\omega$ as an element of $\Omega=\{0,1\}^{\Z}$ where $\omega(-n)=\omega(n-1)$, let $X \subset \Omega$ be
its orbit closure under the shift $\sigma$ with $\sigma\xi(n)=\xi(n+1)$. Then $(X,\sigma)$ is a minimal flow called
the Morse minimal set.

The homeomorphism $\varphi:\xi \ra \overline{\xi}$
 where $\overline{\xi}(n)=\overline{\xi(n)}$ (and $\overline{0}=1, \overline{1}= 0$) preserves X and commutes
with $\sigma$. The quotient space $Y$, of $X$ modulo the group $\{\varphi, \varphi^2=id\}$
is a factor of $(X,\sigma)$  in the sense that the natural projection $\pi_1: X\ra Y$ satisfies $\pi_1\sigma=\sigma\pi_1$.
For every $\xi \in X$ there exists a sequence $k_i$ such that $\sigma^{k_i}\ra \xi$
and we can associate with $\xi$ the dyadic sequence $\{a_n\}$, $0 \leq a_n \leq 2^n - 1$,
according to the rule $a_n = \lim\{k_i($mod $2^n)\}$. It is easy to check that this
limit exists and is independent of the particular choice of the sequence $\{k_i\}$.
Clearly also the dyadic sequences corresponding to $\xi$ and $\overline{\xi}$ coincide,
so that we can consider the map $\pi_2:Y\ra G$ where $G$ is the compact
group of sequences $\{\{a_n\}:0 \leq a_n \leq 2^n-1, a_{n+1}=a_n($mod $2^n)\}$. Moreover $\pi_2\sigma y=(\pi_2y)+1$
 where $1 = (1,0,0,\ldots,)\in G$. In fact it is not hard to
describe  $\pi_2$ explicitly. If $\eta \in \Omega$  is defined by $\eta(n)=\omega(n)$ for $n\geq 0$ and
$\eta(n)=\overline{\omega(n)}$ for $n< 0$ then $\eta \in X$  and denoting $y_1=\pi_1(\omega), y_2=\pi_1(\eta)$
we have for all $n\in \Z$, $\pi_2^{-1}(n\cdot 1)=\{\sigma^ny_1, \sigma^ny_2\}$ while $\pi_2^{-1}(g)$ is a singleton
for every $g\in G\setminus \{n\cdot 1;n\in \Z\}$. The map $\pi_2$ is therefore almost one to one hence proximal.

\begin{exmp} The Morse minimal system is strongly $\F_t$-sensitive.
\end{exmp}
\begin{proof} Let $X$ be the Morse minimal system. Then $\pi_1:X\ra Y$ is a
group extension and $\pi_2: Y\ra G$ is an
almost one-to-one extension. It is easy to see that
$\inf_{n\in \Z_+}d(\sigma^n \overline{\omega},\sigma^n\eta)>0$ and $(\overline{\omega},\eta)$ is asymptotic for $\sigma^{-1}$.
By Proposition \ref{exam00}, we conclude that the Morse minimal system is strongly $\F_t$-sensitive.

\end{proof}

\subsection{Strong \texorpdfstring{$\F_{Poin_d}$}{FPoind}
and \texorpdfstring{$\F^*_{d,0}$}{F*d0}-sensitivity}\label{sec-5.2}
In this subsection, we discuss strong $\F_{Poin_d}$ and $\F^*_{d,0}$-sensitivity. In this subsection we
assume that $T$ is a homeomorphism.

\begin{defn}
Let $(X,T)$ be a t.d.s. We say $(X,T)$ is strongly $\F_{Poin_d}$ (resp. $\F^*_{d,0})$-sensitive if there is $\delta>0$
such that for each opene subset $U$ of $X$, there are $x,y\in U$ with
$\{n\in\Z:d(T^nx,T^ny)>\delta\} \in \F_{Poin_d}$ (resp. $\F^*_{d,0})$
\end{defn}

We state some basic notations, definitions and results related to $\F_{Poin_d}(resp. \F^*_{d,0})$ first.

We say that $S \subset \Z$ is a set of {\em $d$-recurrence} if
for every measure preserving system $(X,\chi,\mu,T)$ and for every
$A\in \chi$ with $\mu (A)> 0$, there exists $n \in S\setminus\{0\}$  such that
$\mu(A\cap T^{-n}A\cap \ldots \cap T^{-dn}A)>0.$
Let $\F_{Poin_d}$  \index{$\F_{Poin_d}$} be
the family consisting of all sets of $d$-recurrence. By Furstenberg's multiple ergodic theorem
the definition is reasonable.
A striking result due to Furstenberg and Katznelsen \cite[Theorem C]{F-K} in our terms
is that $\F_{fip}\subset \F_{Poin_d}$. So we have

\begin{prop} If a minimal system $(X,T)$ is not strongly $\F_{Poin_d}$-sensitive, then
it is an almost one-to-one extension of its maximal $\infty$-step nilfactor.
\end{prop}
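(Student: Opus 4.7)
The plan is to derive this as an immediate contrapositive from Theorem~B (Theorem~\ref{925--1}), using the stated inclusion $\F_{fip}\subset \F_{Poin_d}$ due to Furstenberg and Katznelson \cite{F-K} as the only nontrivial input. The proof will be essentially one line, so I do not anticipate any real obstacles.

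First I would record the following general monotonicity principle: whenever $\F\subset \F'$ are two families, strong $\F$-sensitivity implies strong $\F'$-sensitivity. This is immediate from the definition, for if $(X,T)$ is strongly $\F$-sensitive with constant $\delta$, then for every opene $U$ there are $x,y\in U$ with $\{n\in\Z_+:d(T^nx,T^ny)>\delta\}\in\F\subset\F'$, so that the very same pair and constant witness strong $\F'$-sensitivity.

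Next I would apply this to $\F_{fip}\subset\F_{Poin_d}$ to conclude that strong $\F_{fip}$-sensitivity implies strong $\F_{Poin_d}$-sensitivity. Taking the contrapositive, if $(X,T)$ is a minimal system that is not strongly $\F_{Poin_d}$-sensitive, then it is not strongly $\F_{fip}$-sensitive. By the equivalence (1)$\Leftrightarrow$(4) in Theorem~\ref{925--1}, this is equivalent to the factor map $\pi\colon X\to X_\infty$ being almost one-to-one, which is the desired conclusion. (Equivalently, by Proposition~\ref{thm-AA}, $(X,T)$ is $\infty$-step almost automorphic.) Note that Theorem~\ref{925--1} is stated for invertible minimal systems, which is consistent with the blanket assumption in this subsection that $T$ is a homeomorphism.
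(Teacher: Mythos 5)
Your proposal is correct and is essentially identical to the paper's own (one-line) proof, which likewise deduces the statement from the inclusion $\F_{fip}\subset \F_{Poin_d}$ of Furstenberg--Katznelson together with Theorem~B; you have merely made the trivial monotonicity step and the contrapositive explicit.
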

\begin{proof} It follows from the fact $\F_{fip}\subset \F_{Poin_d}$ and Theorem B.
\end{proof}

A subset $A$ of $\Z$ is a {\em Nil$_d$ Bohr$_0$-set}
\index{Nil$_d$ Bohr$_0$-set} if there exist a $d$-step nilsystem
$(X,T)$, $x_0\in X$ and an open neighborhood $U$ of $x_0$ such that
$N(x_0,U)=:\{n\in \Z: T^n x_0\in U\}$ is contained in $A$. Denote by
$\F_{d,0}$ \index{$\F_{d,0}$} the family consisting of all
Nil$_d$ Bohr$_0$-sets. Let $\F_{GP_d}$  be the family generated by the sets of forms
$$\bigcap_{i=1}^k\{n\in\Z:P_i(n)\ (\text{mod}\ \Z)\in (-\ep_i,\ep_i)
\},$$ where $k\in\N$, $P_1,\ldots,P_k$ are generalized polynomials
of degree $\le d$, and
$\ep_i>0$. For the definition of generalized polynomials, see~\cite[Page 21]{WSY2015}.
We have \cite[Proposition 7.21, Proposition 7.24]{WSY2015}
for each $d \in \N$, $\F_{d,0}$ is a filter, and $\F_{Poin_d}$ has the Ramsey
property.

The following two lemmas will be used in the next theorem.

\begin{lem}\cite[Theorem E]{WSY2015}\label{RPd1} Let $(X,T)$ be a minimal system and
$x,y\in X$. Then the following statements are equivalent for
$d\in\N\cup\{\infty\}$:

\begin{enumerate}
\item $(x,y)\in \RP^{[d]}$.
\item $N(x,U)\in \F_{d,0}^*$ for each
neighborhood $U$ of $y$.
\item $N(x,U)\in \F_{Poin_d}$ for each
neighborhood $U$ of $y$.

\end{enumerate}
\end{lem}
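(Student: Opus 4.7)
The plan is to prove the cyclic chain $(1) \Rightarrow (2) \Rightarrow (3) \Rightarrow (1)$, using throughout the factor map $\pi \colon X \to X_d = X/\RP^{[d]}$ onto the maximal $d$-step nilfactor and the fact that $X_d$ is an inverse limit of basic $d$-step minimal nilsystems. In particular, $(x,y) \in \RP^{[d]}$ is equivalent to $\pi(x) = \pi(y)$, which is the pivot on which both (2) and (3) will be characterized.

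For $(1) \Rightarrow (2)$, fix $(x,y) \in \RP^{[d]}$, a neighborhood $U$ of $y$, and a $\mathrm{Nil}_d$ $\mathrm{Bohr}_0$-set $A \supseteq N(w_0, W)$ in some basic $d$-step nilsystem $(Z, \tau)$. I would form the product $(X \times Z, T \times \tau)$, pass to a minimal subsystem $M$ of the orbit closure of $(y, w_0)$, and use $\pi(x) = \pi(y)$ together with the distality of $Z$ to lift to a point $(x, w) \in M$ with $w$ close to $w_0$; shifting this pair into $U \times W$ then yields $n \in N(x,U) \cap N(w_0, W) \subseteq N(x,U) \cap A$. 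For $(2) \Rightarrow (1)$, I would argue the contrapositive: if $\pi(x) \neq \pi(y)$, then some basic $d$-step nilsystem factor $\sigma \colon X \to Z$ (through $\pi$) separates the images of $x$ and $y$. Choosing disjoint open neighborhoods $W \ni \sigma(x)$ and $W' \ni \sigma(y)$ in $Z$ and setting $U = \sigma^{-1}(W')$, the $\mathrm{Nil}_d$ $\mathrm{Bohr}_0$-set $N(\sigma(x), W)$ is disjoint from $N(x,U)$, violating $N(x,U) \in \F_{d,0}^*$.

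The equivalence $(2) \Leftrightarrow (3)$ is the main obstacle, since it rests on structure theory beyond the elementary dynamical arguments above. The implication $(3) \Rightarrow (2)$ reduces to the inclusion $\F_{d,0} \subseteq \F_{Poin_d}$, which follows from applying Furstenberg--Katznelson multiple recurrence inside a basic $d$-step nilsystem so that every return-time set $N(w_0, W)$ is itself a set of $d$-recurrence. The reverse implication $(2) \Rightarrow (3)$ requires the Host--Kra and Ziegler theorem that the characteristic factor for $d$-step multiple recurrence is a pro-nilsystem of order $d$; this lets one show that, for any measure-preserving $(Y, \mu, S)$ and any $B$ with $\mu(B) > 0$, the set of good times for $d$-step multiple recurrence of $B$ contains a translate of a $\mathrm{Nil}_d$ $\mathrm{Bohr}_0$-set, whence any $N(x,U)$ meeting every $\mathrm{Nil}_d$ $\mathrm{Bohr}_0$-set must itself lie in $\F_{Poin_d}$.
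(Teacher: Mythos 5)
First, note that the paper itself contains no proof of this lemma: it is imported verbatim as \cite[Theorem E]{WSY2015}, where it is one of the main results of that memoir, proved via the identification $\F_{d,0}=\F_{GP_d}$ and a substantial amount of nilsystem structure theory. So there is no internal argument to compare against, and your sketch must stand on its own; as written it has genuine gaps.

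In $(1)\Rightarrow(2)$, the step ``shifting this pair into $U\times W$'' does not deliver what is needed. If $(x,w)\in M$ with $w$ merely close to $w_0$, minimality of $M$ gives $n$ with $T^nx\in U$ and $\tau^nw\in W$, i.e.\ $n\in N(x,U)\cap N(w,W)$ --- but the set you must hit is $N(w_0,W)$. A basic $d$-step nilsystem with $d\ge 2$ is distal but not equicontinuous, so proximity of $w$ to $w_0$ does not transfer return times. You actually need $(y,w_0)\in\overline{\Orb((x,w_0),T\times\tau)}$ exactly, i.e.\ that minimal joinings of $X$ with $d$-step nilsystems are saturated over $X_d$; that is a real theorem (essentially the topological characteristic-factor statement of Host--Kra--Maass), and ``distality of $Z$'' alone does not yield it. Your $(2)\Rightarrow(1)$ via a separating basic nilsystem factor is the one step that is essentially sound (modulo the routine inverse-limit reduction for $d=\infty$).

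The treatment of $(2)\Leftrightarrow(3)$ is where the plan breaks structurally. For $(3)\Rightarrow(2)$ you reduce to $\F_{d,0}\subseteq\F_{Poin_d}$, but that is the wrong inclusion: knowing that $N(x,U)$ and a given Nil$_d$ Bohr$_0$-set are both sets of $d$-recurrence does not force them to intersect. What is needed is $\F_{Poin_d}\subseteq\F_{d,0}^{*}$, which is precisely Lemma~\ref{april-1} of this paper (\cite[Corollary D]{WSY2015}) and is a different and harder statement. For $(2)\Rightarrow(3)$, producing ``a translate of a Nil$_d$ Bohr$_0$-set'' inside the multiple-recurrence times buys nothing, since $N(x,U)\in\F_{d,0}^{*}$ only guarantees intersection with actual members of $\F_{d,0}$ (all of which contain $0$), not with their translates; and a family-level implication $\F_{d,0}^{*}\subseteq\F_{Poin_d}$ is essentially the higher-order Katznelson problem and is not available. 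The equivalence of $(2)$ and $(3)$ has to be routed through $(1)$ in both directions --- one proves $(1)\Leftrightarrow(2)$ and $(1)\Leftrightarrow(3)$ separately, which is how \cite{WSY2015} proceeds --- rather than by comparing the two families directly.
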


\begin{lem}\cite[Theorem F]{WSY2015}\label{RPd2} Let $(X,T)$ be a minimal system,
$x\in X$ and $d\in\N\cup\{\infty\}$. Then the following statements
are equivalent:
\begin{enumerate}
\item $x$ is a $d$-step AA point.

\item $N(x,V)\in \F_{d,0}$ for each neighborhood $V$ of $x$.

\item $N(x,V)\in \F_{Poin_d}^*$ for each
neighborhood $V$ of $x$.

\end{enumerate}
\end{lem}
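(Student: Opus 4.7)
The plan is to establish the equivalence via the implications $(1) \Rightarrow (2)$, $(2) \Rightarrow (1)$, $(3) \Rightarrow (1)$, and $(1) \Rightarrow (3)$, with Lemma~\ref{RPd1} as the primary tool and the last implication as the main technical step.

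For $(1) \Rightarrow (2)$, I would use that, by Proposition~\ref{thm-AA}, the hypothesis $\RP^{[d]}(X)[x] = \{x\}$ is equivalent to $\pi^{-1}(\pi(x)) = \{x\}$ for the canonical factor map $\pi \colon X \to X_d$. Given a neighborhood $V$ of $x$, continuity of $\pi$ and compactness of $X \setminus V$ produce a neighborhood $W$ of $\pi(x)$ in $X_d$ with $\pi^{-1}(W) \subseteq V$. Since $X_d$ is an inverse limit of basic minimal $d$-step nilsystems, one can further find a factor map $\sigma \colon X_d \to Z$ onto a basic $d$-step nilsystem $(Z, T_Z)$ and a neighborhood $W_0$ of $z_0 := \sigma(\pi(x))$ with $\sigma^{-1}(W_0) \subseteq W$. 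Then $N(z_0, W_0) \subseteq N(\pi(x), W) \subseteq N(x, V)$, and the leftmost set lies in $\F_{d,0}$ by definition; upward heredity of $\F_{d,0}$ yields $N(x,V) \in \F_{d,0}$.

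The implications $(2) \Rightarrow (1)$ and $(3) \Rightarrow (1)$ I would handle together by contraposition. If $x$ is not $d$-step AA, then there is $y \neq x$ with $(x,y) \in \RP^{[d]}$; pick disjoint open neighborhoods $V$ of $x$ and $U$ of $y$. Lemma~\ref{RPd1} gives $N(x,U) \in \F_{d,0}^* \cap \F_{Poin_d}$. Since $V \cap U = \emptyset$ forces $N(x,V) \cap N(x,U) = \emptyset$, the membership $N(x,U) \in \F_{d,0}^*$ rules out $N(x,V) \in \F_{d,0}$, contradicting~(2); and $N(x,U) \in \F_{Poin_d}$ rules out $N(x,V) \in \F_{Poin_d}^*$, contradicting~(3).

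The hard part will be $(1) \Rightarrow (3)$. My plan is to combine the already-proved $(1) \Rightarrow (2)$ with the key inclusion $\F_{d,0} \subseteq \F_{Poin_d}^*$, so that $N(x,V) \in \F_{d,0}$ yields $N(x,V) \in \F_{Poin_d}^*$. To establish this inclusion, take $A \in \F_{d,0}$, so $A \supseteq N(y_0, U_0)$ for some basic minimal $d$-step nilsystem $(Y, T_Y) = (G/\Gamma, \tau\cdot)$, point $y_0 \in Y$, and open neighborhood $U_0$ of $y_0$. Let $E$ be any set of $d$-recurrence. Equip $Y$ with the quotient of a left-invariant metric on $G$, which is preserved by the translation $T_Y$, and pick a ball $B$ around $y_0$ of radius $r$ so small that $B(y_0, 2r) \subseteq U_0$. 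Applying the defining property of $E$ to $(Y, \mu_Y, T_Y)$ and $B$ yields $n \in E \setminus \{0\}$ with $\mu_Y(B \cap T_Y^{-n}B) > 0$, hence a witness $y \in B$ with $T_Y^n y \in B$. The isometry property of $T_Y^n$ then gives
\[
d(T_Y^n y_0, y_0) \leq d(T_Y^n y_0, T_Y^n y) + d(T_Y^n y, y_0) = d(y_0, y) + d(T_Y^n y, y_0) < 2r,
\]
so $T_Y^n y_0 \in U_0$, i.e.\ $n \in N(y_0, U_0) \subseteq A$, proving $A \cap E \neq \emptyset$ and hence $A \in \F_{Poin_d}^*$. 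For the case $d = \infty$, the inclusion follows formally by taking unions, using $\F_{\infty,0} = \bigcup_d \F_{d,0}$ and $\F_{Poin_\infty}^* = \bigcup_d \F_{Poin_d}^*$.
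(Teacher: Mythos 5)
The paper itself does not prove this lemma: it is imported verbatim as \cite[Theorem F]{WSY2015}, so you are attempting a from-scratch proof of a quoted result. Within your proposal, the implications $(1)\Leftrightarrow(2)$ and $(3)\Rightarrow(1)$ are sound: the reduction of $(1)\Rightarrow(2)$ to a finite stage of the inverse limit is the standard argument (and $\RP^{[d]}[x]=\{x\}$ iff $\pi^{-1}(\pi(x))=\{x\}$ is just the definition of $X_d$, no need for Proposition~\ref{thm-AA}), and the contrapositive derivation of $(2)\Rightarrow(1)$ and $(3)\Rightarrow(1)$ from Lemma~\ref{RPd1} with disjoint neighborhoods is correct.

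The step $(1)\Rightarrow(3)$, however, breaks down exactly at the point you identify as the crux. You claim that the quotient of a left-invariant metric on $G$ makes the translation $T_Y\colon y\mapsto\tau y$ of $Y=G/\Gamma$ an isometry. This fails twice over. First, a left-invariant metric on a non-abelian $G$ is not right-$\Gamma$-invariant, so it does not descend to a metric on $G/\Gamma$; the quotient construction needs right invariance, which is then incompatible with left translations acting isometrically unless the metric is bi-invariant, and non-abelian nilpotent Lie groups admit no compatible bi-invariant metric. Second, and decisively: if $T_Y$ were an isometry for any compatible metric, the minimal system $(Y,T_Y)$ would be equicontinuous, hence a group rotation; basic $d$-step nilsystems with $d\ge2$ (e.g.\ the Heisenberg nilsystem) are distal but not equicontinuous, which is the whole reason $\F_{d,0}$ is strictly larger than the Bohr$_0$ family. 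A telltale symptom is that your argument only invokes the two-fold intersection $\mu(B\cap T_Y^{-n}B)>0$, i.e.\ only $1$-recurrence of $E$, so if valid it would prove $\F_{d,0}\subseteq\F_{Poin_1}^{*}$, namely that every Nil$_d$ Bohr$_0$-set meets every ordinary Poincar\'e set. That statement is false for $d\ge2$: Frantzikinakis--Lesigne--Wierdl produce sets of $1$-recurrence that are not sets of $2$-recurrence, essentially of the shape $\{n:\{n^2\alpha\}\in[1/4,3/4)\}$, and such a set is disjoint from the Nil$_2$ Bohr$_0$-set $\{n:\|n^2\alpha\|<1/4\}$. The genuine inclusion $\F_{d,0}\subseteq\F_{Poin_d}^{*}$ (equivalently $\F_{Poin_d}\subseteq\F_{d,0}^{*}$, recorded in this paper as Lemma~\ref{april-1}) really requires the full string $\mu(A\cap T^{-n}A\cap\cdots\cap T^{-dn}A)>0$ together with the generalized-polynomial description $\F_{d,0}=\F_{GP_d}$; it is one of the main theorems of \cite{WSY2015}, not a two-line isometry argument. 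As written, $(1)\Rightarrow(3)$ is unproved.
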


Using Lemma \ref{RPd2} instead of using Proposition \ref{921-1} we have the following result by the same proof of Theorem \ref{sensitive}.

\begin{thm} Let $(X,T)$ be a minimal system and $d\in\N$. Then $(X,T)$ is $\F_{Poin_d}$-sensitive if and only if
$\pi:X\lra X_{eq}$ is not almost one-to-one.
\end{thm}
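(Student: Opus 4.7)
The plan is to mimic the proof of Theorem~\ref{sensitive} essentially verbatim, using Lemma~\ref{RPd2} wherever Proposition~\ref{921-1} was previously invoked. The theorem is an ``if and only if'' for every fixed $d\in\N$, so I need to handle both directions.

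For the direction ``$\pi$ not almost one-to-one $\Rightarrow$ $(X,T)$ is $\F_{Poin_d}$-sensitive'', there is nothing to do beyond combining two known facts. By Theorem~\ref{sensitive}, $\pi\colon X\to X_{eq}$ not almost one-to-one is equivalent to $(X,T)$ being $\F_{fip}$-sensitive. Since $\F_{fip}\subset \F_{Poin_d}$ by the Furstenberg--Katznelson theorem \cite{F-K} (quoted in the paragraph preceding the proposition about strong $\F_{Poin_d}$-sensitivity), any $\F_{fip}$-sensitive system is $\F_{Poin_d}$-sensitive, so this direction is immediate.

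For the converse I would argue by contrapositive: assume $\pi\colon X\to X_{eq}$ is almost one-to-one and show $(X,T)$ is not $\F_{Poin_d}$-sensitive. The main substitution is the following observation, which replaces Proposition~\ref{921-1}. Since $X_{eq}$ is a minimal equicontinuous system, it is a $1$-step nilsystem, so by Proposition~\ref{nil-factor-d}, $\RP^{[1]}(X_{eq})=\Delta_{X_{eq}}$. The chain \eqref{easy} then gives $\RP^{[d]}(X_{eq})[y_0]\subset \RP^{[1]}(X_{eq})[y_0]=\{y_0\}$ for every $y_0\in X_{eq}$ and every $d\in\N$, so every point of $X_{eq}$ is a $d$-step AA point. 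Lemma~\ref{RPd2} then yields $N(y_0,V)\in \F_{Poin_d}^*$ for every neighborhood $V$ of $y_0$.

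With this in hand the rest is a copy of (5)$\Rightarrow$(7) in the proof of Theorem~\ref{sensitive}. Suppose for contradiction $(X,T)$ is $\F_{Poin_d}$-sensitive with constant $\delta>0$. Pick $y_0\in X_{eq}$ with $\pi^{-1}(y_0)$ a singleton, take $W\supset \pi^{-1}(y_0)$ open with $\diam(W)<\delta$, and $V\subset X_{eq}$ open with $y_0\in V$ and $\pi^{-1}(V)\subset W$. Setting $V_1=B(y_0,\epsilon/2)\subset V$ for a suitable $\epsilon>0$ and $U=\pi^{-1}(V_1)$, equicontinuity of $X_{eq}$ forces $T^n_{eq}V_1\subset V$ for every $n\in N(y_0,V_1)$, hence $T^nU\subset W$, so $\diam(T^nU)<\delta$. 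Thus $N(U,\delta)\cap N(y_0,V_1)=\emptyset$, and combined with $N(y_0,V_1)\in \F_{Poin_d}^*$ this gives $N(U,\delta)\notin \F_{Poin_d}$, contradicting $\F_{Poin_d}$-sensitivity. There is essentially no real obstacle; the argument is a templated variant of Theorem~\ref{sensitive}, and the only new input is the trivial propagation of the AA property downward through the chain $\RP^{[d]}\subset \RP^{[1]}$.
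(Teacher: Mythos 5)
Your proof is correct and follows exactly the route the paper intends: the paper's own proof is the one-line remark ``use Lemma~\ref{RPd2} in place of Proposition~\ref{921-1} and repeat the proof of Theorem~\ref{sensitive},'' and your write-up (every point of $X_{eq}$ is a $d$-step AA point since $\RP^{[d]}(X_{eq})\subseteq\RP^{[1]}(X_{eq})=\Delta_{X_{eq}}$, hence $N(y_0,V_1)\in\F_{Poin_d}^*$, plus the inclusion $\F_{fip}\subset\F_{Poin_d}$ for the converse direction) is precisely the intended instantiation of that remark.
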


Using the idea of the proof of Proposition \ref{925-1} and
Proposition~\ref{925-4}, we obtain the following result.

\begin{thm}\label{1011-1}
If $(X,T)$ is a strongly $\F_{Poin_d}$-sensitive minimal system, then $\pi: X\lra X_{d}$ is not an almost one-to-one extension.
\end{thm}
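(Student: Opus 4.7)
The plan is to argue by contrapositive: assuming $\pi\colon X\to X_d$ is almost one-to-one, I show that no $\delta>0$ can serve as a strong $\F_{Poin_d}$-sensitivity constant, hence $(X,T)$ is not strongly $\F_{Poin_d}$-sensitive. Fix $\delta>0$ and pick $x_0\in X$ with $\pi^{-1}(\pi(x_0))=\{x_0\}$, guaranteed by the almost one-to-one hypothesis. A standard compactness argument (exactly as in the proof of Theorem~\ref{sensitive}, implication (5)$\Rightarrow$(7)) produces an open neighborhood $V$ of $\pi(x_0)$ in $X_d$ whose full preimage lies inside $B(x_0,\delta/3)$. Set $U:=\pi^{-1}(V)$, which is a non-empty open subset of $X$; I will show that no pair in $U$ witnesses strong $\F_{Poin_d}$-sensitivity at level $\delta$.

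Pick any $x,y\in U$, so that $\pi(x),\pi(y)\in V$. Since by definition $X_d=X/\RP^{[d]}(X)$ is the maximal $d$-step nilfactor, $\RP^{[d]}(X_d)=\Delta_{X_d}$ (Proposition~\ref{nil-factor-d}), so every point of $X_d$ is a $d$-step AA point. Applying Lemma~\ref{RPd2} inside $X_d$ to the points $\pi(x)$ and $\pi(y)$ yields $N(\pi(x),V),\,N(\pi(y),V)\in\F_{Poin_d}^{\ast}$. Because $\F_{Poin_d}$ has the Ramsey property (noted in Section~\ref{sec-5.2}), its dual $\F_{Poin_d}^{\ast}$ is a filter, so $E:=N(\pi(x),V)\cap N(\pi(y),V)\in\F_{Poin_d}^{\ast}$. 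Noting that $N(\pi(x),V)=\{n\in\Z_+:T^nx\in\pi^{-1}(V)\}$, for every $n\in E$ both $T^nx$ and $T^ny$ lie in $\pi^{-1}(V)\subset B(x_0,\delta/3)$, hence $d(T^nx,T^ny)<2\delta/3<\delta$. Consequently, the ``far'' set $\{n\in\Z_+:d(T^nx,T^ny)>\delta\}$ is disjoint from $E\in\F_{Poin_d}^{\ast}$, so it cannot lie in $\F_{Poin_d}$. Because $x,y\in U$ and $\delta>0$ were arbitrary, $(X,T)$ is not strongly $\F_{Poin_d}$-sensitive.

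The main technical points---and the places where care is needed---are twofold: first, the compactness reduction that the $\pi$-preimage of a suitable neighborhood of $\pi(x_0)$ lies inside any prescribed ball around $x_0$, which uses only the singleton-fiber property of $x_0$; and second, the filter property of $\F_{Poin_d}^{\ast}$, which is exactly what allows the two return-time sets $N(\pi(x),V)$ and $N(\pi(y),V)$ to be intersected without leaving $\F_{Poin_d}^{\ast}$. The hint in the excerpt to use the ideas of Propositions~\ref{925-1} and~\ref{925-4} suggests an alternative, more constructive route: mimic the nested Ramsey construction of Proposition~\ref{925-4} (with $\F_{fip}$ replaced by $\F_{Poin_d}$ and $\RP^{[\infty]}$ by $\RP^{[d]}$) to produce, from strong $\F_{Poin_d}$-sensitivity, a pair $(y,z)\in\RP^{[d]}$ with $d(y,z)\geq\delta/10$ and $z$ lying in any prescribed opene set; then, by placing $z$ close to a point with singleton $\pi$-fiber, one contradicts Lemma~\ref{RPd1} and the definition of $X_d$. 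Either path rests on the same pair of facts: the characterizations of $\RP^{[d]}$ and of $d$-step AA points in Lemmas~\ref{RPd1}--\ref{RPd2}, and the Ramsey property of $\F_{Poin_d}$.
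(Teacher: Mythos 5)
Your proof is correct, but it is genuinely different from the one in the paper. The paper argues by contradiction in $X$ itself: starting from a sensitive pair $y,z$ in a small ball around a point $x$ with $\RP^{[d]}[x]=\{x\}$, it uses the Ramsey property to replace the pair by $(x,u)$, intersects with $N(x,U)\in\F_{Poin_d}^*$ (Lemma~\ref{RPd2}), and then runs the nested covering construction of Proposition~\ref{925-4} to manufacture a point $v$ with $N(u,V)\in\F_{Poin_d}$ for every neighborhood $V$ of $v$, hence $(u,v)\in\RP^{[d]}(X)$ with $d(u,v)\ge\delta$ by Lemma~\ref{RPd1}; this is the ``more constructive route'' you sketch at the end. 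You instead argue by contrapositive entirely in the factor: since $\RP^{[d]}(X_d)=\Delta_{X_d}$, \emph{every} point of $X_d$ is a $d$-step AA point, so Lemma~\ref{RPd2} gives $N(\pi(x),V)\in\F_{Poin_d}^*$ for any $x\in\pi^{-1}(V)$, and the filter property of $\F_{Poin_d}^*$ (dual to the Ramsey property of $\F_{Poin_d}$) lets you intersect the return-time sets of the two points of an arbitrary pair in $U=\pi^{-1}(V)$. This is structurally the argument of Theorem~\ref{sensitive}, implication (5)$\Rightarrow$(7), with equicontinuity of the factor replaced by the dual-family formalism; note that your set $E$ depends on the pair $(x,y)$, which is exactly what the negation of strong $\F$-sensitivity requires, whereas the equicontinuous case produces one return set serving all pairs at once. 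Your route is shorter and avoids the measure-theoretic covering machinery of Propositions~\ref{925-1} and~\ref{925-4}; the paper's route has the side benefit of exhibiting an explicit nondiagonal $\RP^{[d]}$-pair at prescribed distance, the kind of intermediate condition that appears as item (3) in Theorems~\ref{thm:block-F-t-sensitive} and~\ref{925--1}, though Theorem~\ref{1011-1} itself (having no converse) does not need it. The only point to be careful about, which you flag correctly, is the compactness step producing $V$ with $\pi^{-1}(V)\subset B(x_0,\delta/3)$ from the singleton fiber; that is standard and already used in the proof of Theorem~\ref{sensitive}.
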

\begin{proof}
Suppose that $(X,T)$ is strongly $\F_{Poin_d}$-sensitive with the sensitive constant
$10\delta$ and $\pi: X\lra X_{d}$ is an almost one-to-one extension.
Then there is $x\in X$ such that  $\RP^{[d]}[x]=x$.

Let $\delta'<\delta$ and $U=B(x,\delta')$, then there are $y,z\in U$ such that
$$F=\{n\in \Z:d(T^ny,T^nz)>10\delta\}\in \F_{Poin_d}.$$  By the Ramsey property of $\F_{Poin_d}$,
$F_1=\{n\in \Z:d(T^nx,T^nu)>5\delta\}\in \F_{Poin_d},$ where  $u=y$ or $u=z$.
As $\RP^{[d]}[x]=x$, by Lemma \ref{RPd1} we have
$F_2=N(x,U)\in \F_{Poin_d}^*$. So
$$F_3=F_1\cap F_2\subset \{n\in \Z:d(x,T^nu)>5\delta-\delta'\}\in \F_{Poin_d}.$$

Then by the Ramsey property of $\F_{Poin_d}$ and
using the same argument as in the proof of Proposition~\ref{925-4},
we deduce that there are $v\in X$ with $d(u,v)\geq  \delta$ and for each neighborhood
$V$ of $v$, $N(u,V)\in \F_{Poin_d}.$ It is clear that $(u,v)\in \RP^{[d]}(X)$ by Lemma \ref{RPd1}. Moreover,
we know that $\pi(u)=\pi(v)$ since $\RP^{[d]}(X_d)=\Delta$. This contradicts to the fact that $\RP^{[d]}[x]=x$,
showing that $\pi$ is not almost one-to-one.


\end{proof}

\begin{cor}
If $(X,T)$ is strongly $\F_{d,0}^{*}$-sensitive minimal system, then $\pi: X\lra X_{d}$ is not an one-to-one extension.
\end{cor}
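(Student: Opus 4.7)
The plan is to mimic the proof of Theorem~\ref{1011-1}, substituting $\F_{d,0}^*$ for $\F_{Poin_d}$ and $\F_{d,0}$ for $\F_{Poin_d}^*$ throughout. (I read the statement as ``not an almost one-to-one extension''; the one-to-one case is then a fortiori.) The substitution is possible because: (i) by Lemma~\ref{RPd1} the characterization $(x,y)\in\RP^{[d]} \Leftrightarrow N(x,U)\in\F_{d,0}^*$ for all neighborhoods $U$ of $y$ is completely parallel to the $\F_{Poin_d}$-characterization used before; (ii) by Lemma~\ref{RPd2} a point $x$ is a $d$-step AA point if and only if $N(x,V)\in\F_{d,0}$ for all neighborhoods $V$ of $x$; and (iii) since $\F_{d,0}$ is a filter, its dual $\F_{d,0}^*$ has the Ramsey property, which is exactly the combinatorial tool exploited in the previous proof.

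I will argue by contradiction. Assume that $(X,T)$ is strongly $\F_{d,0}^*$-sensitive with sensitive constant $10\delta$ and that $\pi\colon X\to X_d$ is almost one-to-one, so there is $x\in X$ with $\RP^{[d]}[x]=\{x\}$. For each $\delta'\in(0,\delta)$, set $U = B(x,\delta')$. Sensitivity gives $y,z\in U$ with $\{n : d(T^ny,T^nz)>10\delta\}\in \F_{d,0}^*$, and the Ramsey property of $\F_{d,0}^*$ produces $u\in\{y,z\}$ such that $F_1 := \{n : d(T^nx,T^nu)>5\delta\}\in\F_{d,0}^*$. By Lemma~\ref{RPd2}, $F_2 := N(x,U) \in \F_{d,0}$. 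Writing $F_1 = (F_1\cap F_2)\cup(F_1\setminus F_2)$ and observing that $F_1\setminus F_2 \subset \Z\setminus F_2$ is disjoint from the $\F_{d,0}$-set $F_2$ and therefore cannot lie in $\F_{d,0}^*$, the Ramsey property of $\F_{d,0}^*$ forces $F_3 := F_1\cap F_2 \in \F_{d,0}^*$, and by construction $F_3 \subset \{n : d(x,T^nu) > 5\delta-\delta'\}$.

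In particular $N(u,\, X\setminus B(x,5\delta-\delta')) \in \F_{d,0}^*$, and I now run the Ramsey-plus-covering construction from Proposition~\ref{925-4}: repeatedly cover the compact set $X\setminus B(x,5\delta-\delta')$ by progressively finer closed balls and at each stage use Ramsey of $\F_{d,0}^*$ to retain one ball on which $N(u,\cdot)$ still lies in $\F_{d,0}^*$. The limit point $v\in X$ produced this way satisfies $d(u,v)\geq 4\delta$ (once $\delta'$ is small) and $N(u,V)\in\F_{d,0}^*$ for every neighborhood $V$ of $v$; Lemma~\ref{RPd1} then yields $(u,v)\in\RP^{[d]}$. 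Letting $\delta'\to 0$ along a sequence, passing to a convergent subsequence, we obtain $u_n\to x$ and $v_n\to v^*$ with $d(x,v^*)\geq 4\delta$, and closedness of $\RP^{[d]}$ gives $(x,v^*)\in\RP^{[d]}$—contradicting $\RP^{[d]}[x]=\{x\}$.

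The main thing I expect to have to check carefully is that the Ramsey/covering extraction from Proposition~\ref{925-4} transfers verbatim with $\F_{d,0}^*$ in place of $\F_{Poin_d}$. As in those earlier arguments, the only ingredients actually used are the Ramsey property of the family and compactness of $X$, both of which are in hand; so I expect no genuine obstacle, but it is worth inspecting each covering step to confirm that no filter-like closure specific to $\F_{Poin_d}$ was being used.
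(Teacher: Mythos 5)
Your proposal is correct and is essentially the paper's own argument: the paper proves this corollary simply by saying ``the proof is similar with Theorem~\ref{1011-1},'' and the substitution you carry out ($\F_{d,0}^*$ for $\F_{Poin_d}$, with its dual $\F_{d,0}$ playing the role of $\F_{Poin_d}^*$, the Ramsey property of $\F_{d,0}^*$ coming from $\F_{d,0}$ being a filter, and Lemmas~\ref{RPd1} and \ref{RPd2} supplying the matching characterizations) is exactly what is intended. Your explicit justification of $F_1\cap F_2\in\F_{d,0}^*$ and the final limiting step $\delta'\to 0$ are in fact spelled out more carefully than in the paper.
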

\begin{proof}
The proof is similar with Theorem \ref{1011-1}.
\end{proof}


It is unexpected that the converse of Theorem \ref{1011-1} fails.  To give a counter-example we need

\begin{lem}\label{april-1}\cite[Theorem B, Corollary D]{WSY2015} For $d\in\N$, $\F_{d,0}=\F_{GP_d}$ and $\F_{Poin_d}\subset \F^*_{d,0}.$
\end{lem}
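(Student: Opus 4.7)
My plan is to address the two assertions of Lemma~\ref{april-1} separately, since they are of different nature; the first is a characterization and the second is a (dual) containment.

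For the inclusion $\F_{Poin_d}\subset \F^*_{d,0}$, I would pass to the dual: since $(\F^*)^*=\F$ and the operation reverses inclusions, the claim is equivalent to $\F_{d,0}\subset \F^*_{Poin_d}$. So fix a Nil$_d$ Bohr$_0$ set $A\supset N(x_0,U)$ arising from a $d$-step nilsystem $(X,T)$, point $x_0\in X$ and open neighborhood $U$ of $x_0$. By Proposition~\ref{nil-factor-d}, $\RP^{[d]}(X)=\Delta_X$, so every point of $X$ is a $d$-step AA point (in particular $x_0$ is). Lemma~\ref{RPd2}, applied to $x_0$ and the neighborhood $U$, then yields $N(x_0,U)\in \F^*_{Poin_d}$, and enlarging to $A$ preserves membership in the dual family since $\F^*_{Poin_d}$ is hereditary upwards. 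This is the short conceptual argument; the work is hidden in Lemma~\ref{RPd2}, which itself uses the Host--Kra structure theory.

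For the equality $\F_{d,0}=\F_{GP_d}$, the plan is to establish a two-way dictionary between $d$-step nilorbits and generalized polynomials of degree at most $d$. For $\F_{GP_d}\subset \F_{d,0}$, given generalized polynomials $P_1,\dots,P_k$ of degree $\le d$ and $\ep_i>0$, the idea is to realize the tuple $(P_1(n),\dots,P_k(n)) \bmod \Z^k$ as (a projection of) a nilorbit $\tau^n x_0$ on a suitable $d$-step nilmanifold $X=G/\Gamma$, which is the content of Leibman's theorem on polynomial sequences on nilmanifolds; then the defining inequalities $P_i(n)\in(-\ep_i,\ep_i)$ correspond to an open neighborhood $U$ of $x_0$ on that nilmanifold. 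Conversely, for $\F_{d,0}\subset \F_{GP_d}$, given a return-time set $N(x_0,U)$ on a $d$-step nilmanifold $X=G/\Gamma$, one chooses a Malcev basis of $G$ and uses the nilpotency together with the Baker--Campbell--Hausdorff formula to express the coordinates of $\tau^n x_0$ as generalized polynomials in $n$ of degree $\le d$; the condition $\tau^n x_0\in U$ then unfolds to a finite intersection of conditions of the required form, after shrinking $U$ to a coordinate box.

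The main obstacle is the second assertion, and specifically the direction $\F_{d,0}\subset\F_{GP_d}$: the bookkeeping needed to convert Malcev coordinates of $\tau^n$ into honest generalized polynomials, and then to match the two families exactly (not merely up to refinement), is delicate and relies on substantial combinatorial input about generalized polynomials. In the interest of keeping our argument self-contained at the right level, I would import both directions from \cite[Theorem B, Corollary D]{WSY2015} rather than redo this analysis, noting only the structural role explained above; this is sufficient for the use we make of the lemma in constructing the counterexample to the converse of Theorem~\ref{1011-1}.
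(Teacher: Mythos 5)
The paper offers no proof of this lemma at all: it is imported wholesale from \cite[Theorem B, Corollary D]{WSY2015}, and your proposal, after some correct preliminary reductions (the dualization showing $\F_{Poin_d}\subset\F^*_{d,0}$ is equivalent to $\F_{d,0}\subset\F^*_{Poin_d}$, and the Malcev-coordinate/Leibman dictionary for $\F_{d,0}=\F_{GP_d}$), likewise ends by deferring both assertions to that same reference, so the approaches coincide. The only caveats in your sketch are minor: applying Lemma~\ref{RPd2} requires first passing to the orbit closure of $x_0$ (which is minimal since nilsystems are distal), and deriving the containment from Lemma~\ref{RPd2} would be circular if that result is itself proved in \cite{WSY2015} via the very containment in question --- but since you import the statement anyway, neither point affects correctness.
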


\begin{exmp}\label{last-e} There is a minimal system which is not an almost one-to-one
extension of the maximal $(d-1)$-step nilfactor and the system is not strongly $\F_{Poin_{d-1}}$-sensitive.
\end{exmp}
\begin{proof} For $d\ge 2$ define $T_{\alpha, d}:\T^d \lra \T^d$ by
$$ T_{\alpha, d}(\theta_1,\theta_2,\cdots,\theta_d)=(\theta_1+\alpha,\theta_2+\theta_1,\cdots,\theta_d+\theta_{d-1})$$
where $\alpha \in \R$.
When $\alpha \in \R \setminus \mathbb{Q}$, $(\T^d, T_{\alpha, d})$ is minimal.
A simple computation yields that
$$ T_{\alpha,d}^n(\theta_1,\theta_2,\cdots,\theta_d)=(\theta_1+n\alpha,\theta_2+n\theta_1+\frac{1}{2}n(n-1)\alpha,\cdots,
\underset{i=0}{\overset{d}{\Sigma}}\tbinom{n}{d-i}\theta_i) $$
where $\theta_0=\alpha, n\in \Z$ and $\tbinom{n}{0}=1, \tbinom{n}{i}=\frac{\prod_{j=0}^{i-1}(n-j)}{i!}$ for $i=1,2,\cdots,d$.

$(\T^d, T_{\alpha, d})$ is a $d$-step nilsystem, so we have $\RP^{[d]}(\T^d)=\Delta_{T^d}$, and for $s<d$
$$\RP^{[s]}(\T^d)=\{({\bf x},{\bf y}): \text{the first}\ s\ \text{coordinates of}\ {\bf x},{\bf y}\ \text{are the same}\}.$$
When $\alpha \in \R \setminus \mathbb{Q}$, $(\T^d, T_{\alpha, d})$ is minimal and not an almost one-to-one extension of its maximal $(d-1)$-step nilfactor.
We will prove that $\T^d$ is not strongly $\F_{Poin_{d-1}}$-sensitive.

Assume the contrary that it is strongly $\F_{Poin_{d-1}}$-sensitive.
That is, there is $\delta>0$ such that
for any ${\bf x}\in \T^d$ and $\ep\in \R$, there is ${\bf y}\in \T^d$ such that $\|{\bf {x-y}}\|<\ep$
and $\{n\in \Z: d(T^n_{\alpha,d}{\bf x},T^n_{\alpha,d}{\bf y})>2\delta\}\in \F_{Poin_{d-1}}$.
We can choose ${\bf x}={\bf 0}$ and $\varepsilon=\delta$, then
we have ${\bf y}=(y_1,y_2,\cdots,y_d)$ with
$$\{n\in\Z: d(T^n_{\alpha,d}{\bf 0},T^n_{\alpha,d}{\bf y})>2\delta\}\in \F_{Poin_{d-1}}$$
and $\|{\bf y}\|<\delta$. A simple computation yields that
$$T^n_{\alpha,d}{\bf y}-T^n_{\alpha,d}{\bf 0}= (y_1,0,\ldots,0)+(0,T_{y_1, d-1}^n(y_2,y_3,\cdots,y_{d}))$$
So we have $$F_1=\{n\in\Z:\|T_{y_1, d-1}^n(y_2,y_3,\cdots,y_{d})\| \geq\delta\}\in \F_{Poin_{d-1}}$$
since $F_1\supset \{n\in\Z: d(T^n_{\alpha,d}{\bf 0},T^n_{\alpha,d}{\bf y})>2\delta\}.$

Define
\begin{align*}
F_2= \{n\in\Z:\text{the absolute value of each coordinate of }   \\
T_{y_1, d-1}^n(y_2,y_3,\cdots,y_{d})\ \text{is less than}\tfrac{\delta}{(d-1)} \}.
\end{align*}
We know that $F_2\in \F_{GP_{d-1}}$ by the definition of generalized polynomials. Moreover, we have
$$F_2\subset F_3=\{n\in\Z:\|T_{y_1, d-1}^n(y_2,y_3,\cdots,y_{d})\| <\delta\}.$$

Thus, $F_1^c=F_3\supset F_2$ which implies that $F_1^c\in \F_{GP_{d-1}}=\F_{d-1,0}.$  So $F_1\not \in \F_{d-1,0}^*$ which
implies that $F_1\not \in \F_{Poin_{d-1}}$ by Lemma \ref{april-1}, a contradiction.

\end{proof}

\end{document}